\newtheorem{theorem}{Theorem}[section]
\newtheorem{corollary}[theorem]{Corollary}
\newtheorem{lemma}[theorem]{Lemma}
\newtheorem{proposition}[theorem]{Proposition}
\newtheorem{definition}[theorem]{Definition}
\newtheorem{remark}{Remark}
\newtheorem{counter}{Counterexample}
\def\N{\mathbb{N}}
\def\Z{\mathbb{Z}}
\def\Q{\mathbb{Q}}
\def\R{\mathbb{R}}
\let\e=\varepsilon
\let\vp=\varphi
\let\t=\tilde
\let\ol=\overline
\let\ul=\underline
\let\grad=\nabla
\let\mc=\mathcal
\def\fa{\forall\;}
\def\solose{\ \Rightarrow\ }
\def\meno{\,\backslash\,}
\def\pp{,\cdots,}
\def\O{\Omega}
\def\dist{\text{\rm dist}}
\def\norma#1{\|#1\|_\infty}
\def\eq#1{{\rm(\ref{eq:#1})}}
\def\thm#1{Theorem \ref{thm:#1}}
\def\square{\hbox{$\sqcap\kern-7pt\sqcup$}}
\def\seq#1{(#1_n)_{n\in\N}}
\def\limn{\lim_{n\to\infty}}
\def\pe{principal eigenvalue}
\def\MP{maximum principle}
\def\SMP{strong maximum principle}
\def\lp{\lambda'_1(-\ll,\R^N)}
\newenvironment{proof}{\emph{Proof.}}{\hfill$\Box$\vskip7pt}
\newenvironment{proofof}[1]{\emph{Proof of #1.}}{\hfill$\Box$\vskip7pt}
\newenvironment{formula}[1]{\begin{equation}\label{eq:#1}}
                       {\end{equation}\noindent}
\def\formulaI#1{\begin{formula}{#1}}
\def\formulaF{\end{formula}\noindent}
\def\immpiccola#1#2#3#4#5{
%di larghezza variabile
%\vskip10pt
\begin{figure}[htbp]
\begin{center}
\includegraphics[width=#3cm]{#1}
#2
\end{center}
\caption{#4} \label{#5}
\end{figure}}
\def\pe{principal eigenvalue}
\def\lp{\lambda_p(-L)}
\title{\bf{Liouville type results for periodic and almost
periodic linear operators}}
\author{Luca Rossi\thanks{
EHESS, CAMS, 54 Boulevard Raspail, F-75006, Paris, France}}
\begin{document}
\maketitle

\begin{abstract}
This paper is concerned with some extensions of the classical
Liouville theorem for bounded harmonic functions to solutions of
more general equations. We deal with entire solutions of periodic
and almost periodic parabolic equations including the elliptic
framework as a particular case. We derive a Liouville type result
for periodic operators as a consequence of a result for operators
periodic in just one variable, which is new even in the elliptic
case. More precisely, we show that if $c\leq0$ and $a_{ij},\ b_i,\
c,\ f$ are periodic in the same space direction or in time, with
the same period, then any bounded solution $u$ of
$$\partial_t u-a_{ij}(x,t)\partial_{ij}u-b_i(x,t)\partial_iu-c(x,t)u=f(x,t),\quad
x\in\R^N,\ t\in\R,$$
is periodic in that direction or in time. 
%In particular, if the
%operator and $f$
%do not depend on time, then $u$ is a stationary solution.
We then derive the following Liouville
type result: if $c\leq0,\ f\equiv0$
and $a_{ij},\ b_i,\ c$ are periodic in all the space/time variables, with
the same periods, then the
space of bounded solutions of the above equation has at most
dimension one. In the case of the equation $\partial_t u-Lu=f(x,t)$,
with $L$ periodic elliptic operator independent of $t$, the hypothesis
$c\leq0$ can be weaken
by requiring that the periodic \pe\ $\lambda_p$ of $-L$ is nonnegative.
Instead, the periodicity assumption
cannot be relaxed, because we explicitly exhibit an almost
periodic function $b$ such that the space of bounded solutions of
$u''+b(x)u'=0$ in $\R$ has dimension 2, and it is generated by the constant
solution and a non-almost periodic solution.

The above counter-example leads us to consider the following
problem: under which conditions are bounded solutions necessarily
almost periodic? We show that a sufficient condition in the case of
the equation
$\partial_t u-Lu=f(x,t)$ is:
$f$ almost periodic and $L$ periodic with $\lambda_p\geq0$.

Finally, we consider problems in general
periodic domains under either Dirichlet or Robin boundary
conditions. We prove analogous properties as in the whole space, together
with some existence and uniqueness results for entire solutions.

% \bigskip

% This paper is concerned with uniqueness and Liouville type results
% for linear elliptic equations in non-divergence form
% $$-a_{ij}(x)\partial_{ij}u-b_i(x)\partial_iu-c(x)u=f(x)\quad\text{in
% }\R^N.$$ We first show that if $c\leq0$ and $a_{ij},\ b_i,\ c$ and
% $f$ are periodic in the same variable (with the same period) then
% any bounded solution $u$ is periodic in that variable. Owing to
% the \MP, this implies as a particular case the following Liouville
% type result - which was already proved in \cite{KP} using
% homogenization techniques and Floquet theory: if $c\leq0,\ f\equiv0$
% and $a_{ij},\ b_i,\ c$ are periodic in all the variables, then the
% space of bounded solutions of the above equation has at most
% dimension one. The hypothesis $c\leq0$ can be weaken by
% $\lambda_p\geq0$, where $\lambda_p$ is the periodic \pe\
% of the associated linear operator. Instead, the periodicity assumption
% cannot be relaxed, because we explicitly exhibit an almost
% periodic function $b$ such that the space of bounded solutions of
% $u''+b(x)u'=0$ in $\R$ has dimension 2, and it is generated by the constant
% solution and a non-almost periodic solution.

% The above counter-example leads us to consider the following
% problem: under which conditions are bounded solutions necessarily
% almost periodic ? We show that a sufficient condition is:
% $a_{ij},\ b_i,\ c$ periodic, $f$ almost periodic and $\lambda_p\geq0$.

% Finally, we establish analogous Liouville type
% results for either Dirichlet and oblique derivative problems in
% general periodic domains.

\end{abstract}

\bigskip
{\small \noindent MSC Primary: 35B10, 35B15;
Secondary: 34C27, 35B05.\\
Keywords: Linear elliptic operator, Liouville theorem, periodic
solutions, almost periodic solutions, maximum principle.}

%%%%%%%%%%%%%%%%%%%%%%%%%%%%%%%%%%%%%%%%%%%%%%%%%%%%%%%%%%%%%%%%%%

\section{Introduction}

%%%%%%%%%%%%%%%%%%%%%%%%%%%%%%%%%%%%%%%%%%%%%%%%%%%%%%%%%%%%%%%%%%

\subsection{Statement of the main results}\label{sec:main}

% We study the properties of bounded
% solutions of linear elliptic equations of the type

% \formulaI{L=0}
% -Lu=0\quad\text{in }\R^N, \formulaF

% where
% $$Lu=a_{ij}(x)\partial_{ij}u+b_i(x)\partial_i u+c(x)u$$
% (the convention is adopted for summation from $1$ to $N$ on repeated
% indices).
% We want to find in particular conditions
% under which the Liouville property (LP) holds. In analogy with the
% classical result for harmonic functions, we say that the LP
% holds if the space of bounded solutions has at most
% dimension one.

We study the properties of bounded
{\bf entire solutions} - that is, solutions for all times -
of the parabolic equation
\formulaI{P=0}
Pu=0,\quad x\in\R^N,\ t\in\R, \formulaF
with
$$Pu=\partial_t u-a_{ij}(x,t)\partial_{ij}u-b_i(x,t)\partial_i u-c(x,t)u$$
(the convention is adopted for summation from $1$ to $N$ on repeated indices,
and $\partial_i,\ \partial_{ij}$ denote the space-directional derivatives).
We want to find in particular conditions
under which the Liouville property (LP) holds. In analogy with the
classical result for harmonic functions, we say that the LP
holds if the space of bounded solutions has at most
dimension one.

We will sometimes restrict our analysis to time-independent operators, that
we write as $P=\partial_t-L$, with $L$ general elliptic operator
in non-divergence form:
$$Lu=a_{ij}(x)\partial_{ij}u+b_i(x)\partial_i u+c(x)u.$$
The associated stationary solutions satisfy the elliptic equation
$-Lu=0$ in $\R^N$.

Our assumptions on the coefficients are: $a_{ij},b_i\in
L^\infty(\R^N\times\R) \cap UC(\R^N\times\R)$ (where $UC$ stands for
uniformly continuous), $c\in
L^\infty(\R^N\times\R)$ and the matrix field $(a_{ij})_{i,j}$ is
symmetric and uniformly elliptic, that is,
$$\fa t\in\R,\ x,\xi\in\R^N,\qquad\ul a|\xi|^2\leq a_{ij}(x,t)\xi_i\xi_j\leq \ol
a|\xi|^2,$$ for some constants $0<\ul a\leq\ol a$.
Let us mention that, in the case of elliptic equations, the uniform
continuity of
the $b_i$ can be dropped. We will sometimes denote the generic space/time
point $(x,t)\in\R^N\times\R$ by $X\in\R^{N+1}$.

We consider in particular operators with {\em periodic} and {\em almost
periodic} coefficients.
We say that a function $\phi:\R^{N+1}\to\R$ is periodic in the $m$-th
variable, $m\in\{1\pp N+1\}$, with period $l_m>0$, if
$\phi(X+l_me_m)=\phi(X)$ for $X\in\R^{N+1}$, where $(e_1,\cdots,e_{N+1})$
denotes the canonical basis of $\R^{N+1}$. If $\phi$ is periodic in all
the variables we simply say that it is periodic, with period
$(l_1,\cdots,l_{N+1})$. A linear operator is said to be periodic
(resp.~periodic in the $m$-th variable) if all its coefficients
are periodic (resp.~periodic in the $m$-th variable) {\bf with the same
period}.

The crucial step to prove the LP consists in
showing that the periodicity of the operator $P$ and of the function $f$
is inherited by bounded solutions
\footnote{ for us, a solution of a parabolic equation such as \eq{P=f} is
 a function $u\in L^p_{loc}(\R^{N+1})$, for all $p>1$, such that
 $\partial_t u,\partial_i u,\partial_{ij}u
\in L^p_{loc}(\R^{N+1})$ and such that \eq{P=f} holds a.~e. We use an
analogous definition for elliptic equations.
In the sequel, we will omit to write
a.~e.~for properties concerning measurable functions, and we will simply
denote by $\inf$ and $\sup$ the $\mathrm{ess }\inf$ and $\mathrm{ess }\sup$.}
of
\formulaI{P=f}
Pu=f(x,t),\quad x\in\R^N,\ t\in\R. \formulaF
Unless otherwise specified, the function $f$ is only assumed to be
measurable.

\begin{theorem}\label{thm:1per}
Let $u$ be a bounded solution of \eq{P=f}, with $P,\ f$ periodic
in the $m$-th variable, with the same period $l_m$, and with
$c\leq0$. Then, $u$ is periodic in the $m$-th variable, with
period $l_m$.
\end{theorem}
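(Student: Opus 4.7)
The plan is to introduce
\[
v(X):=u(X+l_m e_m)-u(X),\qquad X=(x,t)\in\R^{N+1}.
\]
By the $l_m$-periodicity of $P$ and $f$ in the $m$-th variable, $v$ satisfies the homogeneous equation $Pv=0$ on $\R^{N+1}$ and is bounded by $2\|u\|_\infty$. Proving the theorem reduces to showing $v\equiv 0$, and since $-u$ is a bounded solution of $P(-u)=-f$ (with $-f$ still $l_m$-periodic in the $m$-th variable), it suffices to prove $M:=\sup v\leq 0$; the symmetric bound $\inf v\geq 0$ follows by the same argument applied to $-u$.

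Assume by contradiction that $M>0$ and pick $X_n\in\R^{N+1}$ with $v(X_n)\to M$. Translate via $\bar u_n(X):=u(X+X_n)$, which solves $P_n\bar u_n=f_n$, where $P_n$ and $f_n$ are obtained from $P$ and $f$ by translating their coefficients and data by $X_n$. Uniform boundedness and uniform continuity of $a_{ij}$ and $b_i$, uniform $L^\infty$ bounds on $c$, $f$ and $\bar u_n$, together with interior $L^p$ parabolic estimates, yield (up to a subsequence) $\bar u_n\to\bar u_\infty$ in $C^0_{loc}$, $a_{ij}^n\to a_{ij}^\infty$ and $b_i^n\to b_i^\infty$ locally uniformly, and $c^n,f^n\to c^\infty,f^\infty$ weakly-$\ast$ in $L^\infty_{loc}$. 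Crucially, $l_m$-periodicity in the $m$-th variable is stable under translation, so $P^\infty$ and $f^\infty$ remain $l_m$-periodic in that variable, and $c^\infty\leq 0$. Setting $v_\infty(X):=\bar u_\infty(X+l_m e_m)-\bar u_\infty(X)$, we obtain $P^\infty v_\infty=0$, $v_\infty\leq M$ on $\R^{N+1}$, and $v_\infty(0)=M$.

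The strong maximum principle for the parabolic operator $P^\infty$ (applicable since $c^\infty\leq 0$ and $M>0$) then forces $v_\infty\equiv M$ on the past parabolic component $\R^N\times(-\infty,0]$. The contradiction comes from the telescoping identity
\[
\sum_{j=0}^{k-1}v_\infty(X_0+jl_m e_m)=\bar u_\infty(X_0+kl_m e_m)-\bar u_\infty(X_0),
\]
whose right-hand side is uniformly bounded by $2\|u\|_\infty$. If $m\leq N$, one picks $X_0$ with time-component $\leq 0$; the spatial shifts $X_0+jl_m e_m$ preserve time, hence stay in the region where $v_\infty\equiv M$, and the left-hand side equals $kM\to\infty$. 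If $m=N+1$, one replaces the range of summation by $j=-k,\dots,-1$ and takes $X_0=(x_0,0)$: the backward time-shifts remain in $\{t\leq 0\}$, the sum again equals $kM$, while the telescoped right-hand side $\bar u_\infty(X_0)-\bar u_\infty(X_0-kl_m e_{N+1})$ stays bounded. Either way, $k\to\infty$ yields the desired contradiction.

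The main obstacle is the compactness/limiting step: one must produce a limit $\bar u_\infty$ which solves a limit equation whose coefficients still satisfy the full list of structural hypotheses (uniform ellipticity, $l_m$-periodicity in the $m$-th variable, $c^\infty\leq 0$), so that a strong maximum principle can be invoked at the limit level. The uniform continuity of $a_{ij}$ and $b_i$ in the standing hypotheses is exactly what permits local uniform convergence of these leading-order coefficients along the translating sequence, without which one could not pass to the limit in the PDE.
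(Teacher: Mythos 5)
Your proposal is correct and follows essentially the same path as the paper: the paper proves Theorem~\ref{thm:1per} by applying a general Lemma (with a supersolution $v$) specialized to $v\equiv1$, and in that case its $\psi=u(\cdot+l_me_m)-u(\cdot)$, translation along a maximizing sequence, passage to a limit equation with nonpositive zeroth-order coefficient, strong parabolic maximum principle in $\{t\le0\}$, and a telescoping/diverging sum are exactly the steps you propose. The only cosmetic difference is that the paper passes to the limit directly in the homogeneous equation solved by $\psi_n=\psi(\cdot+X_n)$, while you pass to the limit for $u(\cdot+X_n)$ and then invoke preservation of $l_m$-periodicity of the limit coefficients to deduce $P^\infty v_\infty=0$; both are valid.
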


From the above result it follows in particular that if $P$ and $f$ do
not depend on $t$ and $c\leq0$, then all bounded solutions of
\eq{P=f} are stationary, that is, constant in time.
Another consequence of \thm{1per} is that if
$P$ and $f$ are periodic (in all the variables) with the same
period, then all bounded solutions are periodic. In particular,
they admit global maximum and minimum and then the strong maximum
principle implies the LP.

\begin{corollary}\label{cor:liouville}
Let $u$ be a bounded solution of \eq{P=0} 
with $P$ periodic and $c\leq0$. Then, two
possibilities occur:
\begin{itemize}

\item[{\rm 1)}] $c\equiv0$ and $u$ is constant;

\item[{\rm 2)}] $c\not\equiv0$ and $u\equiv0$.

\end{itemize}
\end{corollary}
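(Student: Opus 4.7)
\emph{Proof plan.} The strategy is a two-step reduction: first upgrade the one-variable periodicity of Theorem~\ref{thm:1per} to full periodicity of $u$, then invoke the parabolic strong maximum principle on the resulting compact torus.

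First, I apply Theorem~\ref{thm:1per} with $f\equiv 0$ in succession for each coordinate direction $m=1,\dots,N+1$: the hypotheses ($c\le 0$, $P$ periodic in the $m$-th variable with period $l_m$, and $f\equiv 0$ trivially periodic) are satisfied, so $u$ inherits the period $l_m$ in every variable. Consequently $u$ descends to a function on the compact torus $\R^{N+1}/(l_1\Z\times\cdots\times l_{N+1}\Z)$; being continuous (indeed, lying in $W^{2,p}_{loc}$ for every $p$), it attains its global supremum $M$ and its global infimum.

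Next, I set $v:=M-u\ge 0$. This function attains $0$ at some point of the torus, and since $Pu=0$ while $PM=-cM$, one has $Pv=-cM$. If $c\equiv 0$, then $Pv=0$, and the parabolic strong maximum principle (legitimate because $c\le 0$) forces $v\equiv 0$ on the past half-space of any zero; temporal periodicity then propagates this conclusion to all of $\R^{N+1}$, giving $u\equiv M$ and hence alternative~(1).

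If instead $c\not\equiv 0$, I argue by contradiction: suppose $u\not\equiv 0$. Replacing $u$ by $-u$ if necessary (which still solves $Pu=0$ with the same operator), I may assume $M>0$. Then $Pv=-cM\ge 0$ pointwise, and the same SMP-plus-periodicity argument yields $v\equiv 0$, i.e.\ $u\equiv M$. Substituting back gives $0=Pu=-cM$, which forces $c\equiv 0$ because $M>0$, contradicting the standing assumption. Therefore $u\equiv 0$, which is alternative~(2). The main subtlety in the argument is the exact form of the strong maximum principle needed — applied to strong solutions in $W^{2,p}_{loc}$ under only uniformly continuous second-order coefficients, and propagating from a single zero of $v$ through the entire past half-space — together with the use of temporal periodicity to globalize that past-half-space conclusion.
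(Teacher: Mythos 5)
Your proof is correct and follows essentially the same route as the paper: invoke Theorem~\ref{thm:1per} coordinate-by-coordinate (with $f\equiv0$) to conclude full periodicity of $u$, then apply the parabolic strong maximum principle at the resulting global extremum and propagate via time-periodicity. The only cosmetic difference is that the paper applies the SMP directly to $u$ (or $-u$ when $M<0$) at its nonnegative maximum, while you work with $v=M-u$ as a supersolution; the content is identical.
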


Clearly, without the assumption $c\leq0$ the LP no
longer holds in general, even in the case of constant
coefficients. As an example, the space of solutions of $-u''+u=0$
in $\R$ is generated by $u_1=\sin x$ and $u_2=\cos x$. However,
if $P=\partial_t-L$,
condition $c\leq0$ in Corollary \ref{cor:liouville} is not
necessary and can be relaxed by
requiring that the {\em periodic \pe} of $-L$ in $\R^N$ is nonnegative
(cf.~\thm{per} below).
Henceforth, $\lambda_p(-L)$ will always
stand for the periodic \pe\ of $-L$ in $\R^N$ and $\varphi_p$
for the associated principal eigenfunction (see Section
\ref{sec:per} for the definitions).

\begin{theorem}\label{thm:per}
Let $P=\partial_t-L$, with $L$ periodic with period $(l_1\pp l_N)$,
and let $f$ be periodic with period $(l_1\pp l_{N+1})$.
If $u$ is a bounded solution of \eq{P=f} we have that:
\begin{itemize}

\item[{\rm(i)}] if $\lambda_p(-L)\geq0$ then $u$ is periodic, with
period $(l_1\pp l_{N+1})$;

\item[{\rm(ii)}] if $\lambda_p(-L)=0$ and either $f\leq0$ or $f\geq0$
then $u\equiv k\varphi_p$, for some $k\in\R$, and $f\equiv0$;

\item[{\rm(iii)}] if $\lambda_p(-L)>0$ and $f\equiv0$
then $u\equiv 0$.

\end{itemize}
\end{theorem}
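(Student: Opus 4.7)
The plan is to reduce to Theorem~\ref{thm:1per} via the classical change of unknown by the periodic principal eigenfunction. Let $\varphi_p>0$ denote the space-periodic principal eigenfunction of $-L$, so that $L\varphi_p=-\lambda_p\varphi_p$. By periodicity and positivity, $\varphi_p$ is bounded from above and away from zero on $\R^N$, so $v:=u/\varphi_p$ is a bounded function. A direct computation using the symmetry of $a_{ij}$ and the eigenfunction equation shows that $Pu=f$ is equivalent to
$$\widetilde P v:=\partial_t v-a_{ij}\partial_{ij}v-\widetilde b_i\partial_i v+\lambda_p v=\frac{f}{\varphi_p},\qquad \widetilde b_i:=b_i+\frac{2\,a_{ij}\partial_j\varphi_p}{\varphi_p}.$$
Since $\varphi_p$ is space-periodic and of class $C^{1,\alpha}$, the new drift $\widetilde b_i$ inherits the same space-periodicity and belongs to $L^\infty\cap UC$; the zeroth-order term of $\widetilde P$ is the \emph{constant} $-\lambda_p$, which is nonpositive under the assumption $\lambda_p\geq 0$; the operator $\widetilde P$ is independent of $t$; and $f/\varphi_p$ has the full periodicity $(l_1,\dots,l_{N+1})$ of $f$.

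For part (i), I would apply Theorem~\ref{thm:1per} to $\widetilde P v=f/\varphi_p$ successively in each of the $N+1$ coordinate directions. In each space direction $m\in\{1,\dots,N\}$ the operator and the datum are $l_m$-periodic with nonpositive zeroth-order coefficient; in time, $\widetilde P$ is autonomous and hence trivially $l_{N+1}$-periodic, while $f/\varphi_p$ is $l_{N+1}$-periodic. Theorem~\ref{thm:1per} therefore propagates each periodicity to $v$, and multiplying back by the space-periodic $\varphi_p$ yields the claimed periodicity of $u$.

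Parts (ii) and (iii) then follow by evaluating the equation for $v$ at its extrema, which by (i) are attained. In (iii), where $\lambda_p>0$ and $f\equiv 0$, testing $\widetilde P v=0$ at a maximum point gives $\lambda_p\max v\leq 0$, and at a minimum gives $\lambda_p\min v\geq 0$, so $v\equiv 0$ and hence $u\equiv 0$. In (ii), where $\lambda_p=0$ and say $f\geq 0$, the function $w:=\max v - v$ is nonnegative, attains $0$ somewhere, and satisfies $\widetilde P w=-f/\varphi_p\leq 0$ with zero zeroth-order coefficient; the parabolic strong maximum principle together with the $l_{N+1}$-periodicity of $w$ (to propagate the vanishing forward in time) forces $w\equiv 0$, so that $v$ is constant, $f\equiv 0$, and $u\equiv k\varphi_p$. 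The main technical point I anticipate is justifying the extremum-principle reasoning within the $W^{2,p}_{\mathrm{loc}}$ strong-solution class; this is routine via the Alexandrov--Bakelman--Pucci maximum principle or the interior $C^{1,\alpha}$ regularity afforded by the uniform continuity of the $a_{ij}$, and in any case rests on the same machinery already used to prove Theorem~\ref{thm:1per}.
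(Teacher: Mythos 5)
Your proof is correct and is essentially the same as the paper's, only repackaged via the classical ground-state change of unknown $v=u/\varphi_p$: the paper applies its Lemma~\ref{lem:1per} directly with the supersolution $v\equiv\varphi_p$, which is exactly what your transformed equation with nonpositive zeroth-order coefficient $-\lambda_p$ encodes, and its arguments for (ii)--(iii) use the strong maximum principle on $k\varphi_p-u$, which is $\varphi_p$ times your $\max v - v$. The two presentations are interchangeable and rest on the same key facts (the periodic principal eigenfunction is bounded, bounded away from zero, and a supersolution, together with the parabolic strong maximum principle).
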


In the particular case of stationary solutions, that is,
solutions of the elliptic equation $Lu=0$,
statements (ii) and (iii) of \thm{per} are contained in
\cite{KP} (see the next section for further details).
\thm{per} part (iii) immediately implies the uniqueness of bounded solutions to
\eq{P=f}. The existence result is also derived 
(cf.~Corollary \ref{cor:!} below).

% \thm{per} parts (ii) and (iii) extend Corollary
% \ref{cor:liouville} (see Remark \ref{rem:c} below). Moreover,
% statement (iii) implies the uniqueness of bounded solutions of
% \eq{L=f} (cf.~\thm{!}).
% \\

We next consider the problem of the validity of the LP
if we relax the periodicity assumptions on $a_{ij},\ b_i,\
c$ and $f$. A natural generalization of periodic functions of a
single real variable are {\em almost periodic} functions,
introduced by Bohr \cite{ap}. This notion can be readily extended to
functions of several variables through a characterization of {\bf
continuous} almost periodic functions due to Bochner \cite{apChar}.

\begin{definition}\label{def:ap}
We say that a function $\phi\in C(\R^{N+1})$ is almost periodic
(a.~p.) if from any arbitrary sequence $\seq{X}$ in $\R^{N+1}$ can be
extracted a subsequence $(X_{n_k})_{k\in\N}$ such that
$(\phi(X+X_{n_k}))_{k\in\N}$ converges uniformly in $X\in\R^{N+1}$.
\end{definition}

It is straightforward to check that continuous periodic functions
are a.~p.~ (this is no longer true if we drop the continuity
assumption). We say that a linear operator is a.~p.~if its
coefficients are a.~p.

By explicitly constructing a counterexample, we show that the
Liouville type result of Corollary \ref{cor:liouville} does not
hold in general - even for elliptic equations - if we require the
operator to be only a.~p.

\begin{counter}\label{c-e}{\rm
There exists an a.~p.~function $b:\R\to\R$ such that the space of
bounded solutions to \formulaI{c-e} u''+b(x)u'=0\quad\text{in }\R
\formulaF has dimension 2, and it is generated by the function
$u_1\equiv1$ and a function $u_2$ which is not a.~p.}
\end{counter}

This also shows that bounded solutions of a.~p.~equations with
nonpositive zero order term may not be a.~p., in contrast with
what happens for the periodicity (cf.~\thm{1per}). Actually, the
function $b$ in Counterexample \ref{c-e} is {\em limit periodic},
that is, it is the uniform limit of a sequence of continuous
periodic functions (see Definition \ref{def:lp} below). Limit
periodic functions are a subset of a.~p.~functions because, as it
is easily seen from Definition \ref{def:ap}, the space of
a.~p.~functions is closed with respect to the $L^\infty$ norm (see
e.~g.~\cite{amerio-prouse}, \cite{Fink}).
\\

Next, we look for sufficient conditions under which all bounded
solutions of \eq{P=f} are necessarily a.~p. Under the additional assumption
$c\in C(\R^N)$,
we derive the following.

%Since the result of Corollary \ref{cor:per} no longer holds as
%soon as we relax the periodicity assumption on $M$, we try to see
%if it is possible to relax that on $f$. Clearly, it is not
%possible to completely drop it by only assuming that $f$ is
%bounded because, for instance, the function $u(x)=\arctan(x)$
%satisfies
%$$-u''-bu'=f\quad\text{in }\R,$$
%with $$f(x)=\frac1{1+x^2}\left(\frac{2x}{1+x^2}-b\right)$$
%which is nonpositive for $b\geq1$ and nonnegative for $b\leq-1$.
%However, statement (ii) of theorem \thm{per} - and then Corollary
%\ref{cor:per} -
%holds under the weaker assumption $f$ a.~p.

\begin{theorem}\label{thm:ap}
Let $P=\partial_t-L$ with $L$ periodic and let $f$ be a.~p.
If $u$ is a bounded solution of \eq{P=f} we have that:
\begin{itemize}

\item[{\rm(i)}] if $\lambda_p(-L)\geq0$ then $u$ is a.~p.;

\item[{\rm(ii)}] if $\lambda_p(-L)=0$ and either $f\leq0$ or $f\geq0$
then $u\equiv k\varphi_p$, for some $k\in\R$, and $f\equiv0$.

\end{itemize}
\end{theorem}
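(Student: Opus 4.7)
My plan is to prove both parts through Bochner's characterization of almost periodicity (Definition \ref{def:ap}), combining parabolic compactness with uniqueness of bounded solutions of the limit equation, in the same spirit as the proof of \thm{per}. For part (i), I fix any sequence $(X_n)\subset\R^{N+1}$ and seek a subsequence along which $u(\cdot+X_n)$ converges uniformly on $\R^{N+1}$. Writing the spatial component of $X_n$ as $y_n+k_n$, with $k_n$ in the period lattice of $L$ and $y_n$ in the fundamental cell, the periodicity of $L$ forces $u(\cdot+X_n)$ to solve a parabolic equation whose elliptic coefficients are those of $L$ translated by $y_n$ in space. Up to a subsequence, $y_n\to y^*$; by almost periodicity of $f$, $f(\cdot+X_n)\to\hat f$ uniformly; and by interior parabolic estimates, $u(\cdot+X_n)\to v$ locally uniformly to a bounded solution of the limit equation $\partial_t v-L(\cdot+y^*)v=\hat f$.

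The crucial step is to upgrade this local uniform convergence to uniform convergence on $\R^{N+1}$. I argue by contradiction: if it failed, then along a subsequence there would exist $\ep>0$ and points $Y_k$ with $|u(X_{n_k}+Y_k)-v(Y_k)|\geq\ep$; reapplying the same extraction procedure to $(X_{n_k}+Y_k)$ would produce two bounded solutions of one common limit equation differing by at least $\ep$ at the origin. This is a contradiction as soon as uniqueness of bounded solutions holds for the limit equation. When $\lambda_p(-L)>0$, uniqueness comes via the substitution $w:=u/\varphi_p$, after which the equation becomes $\partial_t w-\tilde Lw+\lambda_p w=f/\varphi_p$ with $\tilde L$ having no zero-order term; the strictly negative zero-order coefficient $-\lambda_p$ allows the standard maximum principle for bounded functions in $\R^N\times\R$ to apply. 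The borderline case $\lambda_p(-L)=0$ requires more care because uniqueness then holds only up to multiples of $\varphi_p$: one must first isolate the $\varphi_p$-component of $u$, for instance by testing against the adjoint principal eigenfunction, and apply the previous argument to the complementary part.

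Part (ii) adapts the slide-up argument behind \thm{per}(ii) to the almost periodic setting. Assuming $f\leq 0$, the number $k^*:=\inf\{k\in\R:u\leq k\varphi_p\text{ on }\R^{N+1}\}$ is finite because $\varphi_p$ has positive infimum and $u$ is bounded, and $w:=k^*\varphi_p-u$ is a bounded nonnegative solution of $Pw=-f\geq 0$ with $\inf w=0$. Since this infimum need not be attained, I pick $Y_n$ with $w(Y_n)\to 0$ and apply the extraction procedure of (i); the limit $\tilde w$ of $w(\cdot+Y_n)$ is a bounded nonnegative solution of a limit equation with $\tilde w(0)=0$, so the strong parabolic maximum principle forces $\tilde w\equiv 0$ on the parabolic past of the origin, whence the corresponding limit of $-f(\cdot+Y_n)$ vanishes there. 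Using the translation-invariance of almost periodic functions to propagate this vanishing throughout $\R^{N+1}$ yields $f\equiv 0$, and then Corollary \ref{cor:liouville} delivers $u\equiv k^*\varphi_p$. The main obstacle I expect is the degenerate case $\lambda_p(-L)=0$ in part (i), where the failure of uniqueness by a one-dimensional kernel forces the preliminary isolation of the $\varphi_p$-component of $u$; a related subtlety occurs in part (ii), where the ``past'' vanishing of the limit forcing must be lifted to a global statement via the almost periodic structure of $f$.
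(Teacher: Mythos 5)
Your proof of part (i) is sound only when $\lambda_p(-L)>0$, and the case $\lambda_p(-L)=0$ — which is the substantive borderline case — has a genuine gap. You correctly see that once you have two bounded solutions $U,V$ of a common translated limit equation with $|U(0)-V(0)|\geq\ep$, you need to conclude $U\equiv V$, and that when $\lambda_p(-L)=0$ the best that Theorem \ref{thm:per}(ii)--(iii) gives is $U-V\equiv k\varphi_p$ with $k$ possibly nonzero. Your proposed repair — ``isolate the $\varphi_p$-component of $u$ by testing against the adjoint principal eigenfunction'' — does not work as stated: in $\R^{N+1}$ with merely bounded solutions there is no integral pairing against the (periodic, $L^1_{loc}$ but not $L^1$) adjoint eigenfunction that is well defined, translation-invariant, and commutes with the local limits you are taking. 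The paper resolves this without any such pairing: it translates $U$ and $V$ \emph{back} by $-Z^\sigma_{n_k}$ to produce bounded solutions $v^\sigma$ of the \emph{original} equation $Pu=f$, satisfying $\inf u\leq v^\sigma\leq\sup u$; Theorem \ref{thm:per}(ii)--(iii) gives $u-v^\sigma=h^\sigma\varphi_p$, and since $\varphi_p$ has positive infimum, these $\sup$/$\inf$ bounds force $h^\sigma=0$, hence $v^\sigma\equiv u$, hence $\sup U=\sup V=\sup u$ and $\inf U=\inf V=\inf u$, which (again because $\inf\varphi_p>0$) forces $k=0$. This translate-back-and-compare-extrema step is the essential idea missing from your argument.

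For part (ii), your route differs from the paper's but is basically workable: you first deduce $f\equiv 0$ from the strong maximum principle applied to the limit of translates of $w=k^*\varphi_p-u$ together with the almost periodicity of $f$, and then conclude. Two details should be fixed. First, the citation should be Theorem \ref{thm:per}(ii) with $f\equiv 0$ (trivially periodic), not Corollary \ref{cor:liouville} — the latter requires $c\leq0$, which is not assumed here. Second, in applying the strong maximum principle to the nonnegative supersolution $\t w$ one must be careful that $c$ has no sign restriction; this is fine because a nonnegative supersolution vanishing at an interior point forces vanishing in the parabolic past even when the zero-order coefficient can be positive (absorb $c^+\t w\geq 0$ into the right-hand side and apply the usual version with $-c^-\leq 0$), but the step should be spelled out. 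Note that the paper's version of (ii) uses part (i) explicitly to upgrade the local vanishing of the limit of translates of $w$ to $w\equiv 0$ globally via the almost periodicity of $w$, which is a cleaner route than detouring through $f\equiv 0$.
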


In the above statement, we require $c$ to be
continuous because, in the proof, we will make use of the fact that
it is in particular a.~p. Actually, using some weak compactness arguments,
one can check that the continuity assumptions on $c$ could be removed.

Lastly, we prove analogous results to Theorems \ref{thm:1per}, 
\ref{thm:per} and Corollary \ref{cor:liouville}
for equations in general periodic domains, under either
Dirichlet or Robin boundary conditions. 
The analogue of \thm{1per} holds, in the case of Dirichlet
boundary conditions, for domains periodic just
in the direction $x_m$, whereas under Robin conditions we are able
to prove the result only for domains periodic in all the directions.
The Liouville type result in the Dirichlet case is stronger than in
the whole space 
(Corollary \ref{cor:liouville}) and it is actually a uniqueness result.
An existence result is also obtained using a sub and supersolution method.
In some of the statements for general domains, we require that 
the coefficients of the operator are H\"older continuous because we
need some gradient estimates near the boundary.

%%%%%%%%%%%%%%%%%%%%%%%%%%%%%%%%%%%%%%%%%%%%%%%%%%%%%%%%%%%%%%%%%%

\subsection{A brief survey of the related literature}

Starting from the end of the 50's, the classical Liouville theorem has
been improved
to the self-adjoint elliptic equation
\formulaI{s-a}
\partial_i(a_{ij}(x)\partial_j u)+c(x)u=0\quad\text{in }\R^N.
\formulaF In the case $c\equiv0$ (without any periodicity assumption on
$a_{ij}$), the LP follows directly from the estimate
on the oscillation of weak solutions proved by De Giorgi in the
celebrated paper
\cite{dG}. Another classical way to derive the LP is by applying
the Harnack inequality in the balls $B_r$, provided that one can bound
the constants uniformly with respect to $r$. This has been done by
Gilbarg and Serrin \cite{GS} for the equation
$a_{ij}(x)\partial_{ij}u=0$, with $a_{ij}(x)$ converging to
constants as $|x|\to\infty$.
Analogous Liouville type results can be derived in the parabolic case
using the same type of estimates (see e.~g.~\cite{Lady}).
The case $c\not\equiv0$ has been
treated in many papers, using different techniques, such as
probabilistic methods, semigroup or potential theory. With a
purely pde approach, it is proved by Brezis, Chipot and Xie
in the recent work \cite{BCX},
that the LP holds for \eq{s-a} in the following cases: $N\leq2$
and $c\leq0$; $N>2,\ c\leq0$ and $c(x)\leq-c_0(x)$ for $|x|$ large, with
either $c_0(x)=C|x|^{-\beta}$, $C>0,\ \beta>2$, or $c_0$
nonnegative nontrivial periodic function. General nonvariational
operators such as $L$ defined in Section \ref{sec:main}
are also considered in \cite{BCX} and the LP
is derived for the equation
\formulaI{L=0}
Lu=0\quad\text{in }\R^N,
\formulaF
when $c\leq0$ and $c(x)\leq-C|x|^{-2},\ \sum_{i=1}^N
b_i(x)x_i\leq C$, for some $C>0$ and $|x|$ large.
For the parabolic case, Hile and Mawata proved in \cite{HM} that the
LP holds for
a class of quasilinear equations satisfying some conditions at
infinity. Their result applies in particular to the equation \eq{P=0}
when $c\leq0$,
$(a_{ij}(X))_{i,j}\to\;$identity and $b_i(X),c(X)\to\;$constant, with a suitable
rate, as $|X|\to\infty$.

Some authors treated the problem of the existence and uniqueness of
nonnegative bounded solutions to linear
elliptic equations in divergence form from the point of view of
the criticality property of the operator. Starting from
the ideas of Agmon \cite{A} and Pinchover, and combining analytic and
probabilistic techniques (such as the Martin representation theorem)
Pinsky showed in \cite{P95}
that if $L$ is a periodic operator
satisfying $\lp=0$, then the unique (up to positive multiples)
positive bounded solution of \eq{L=0} is $\vp_p$ (see \cite{PiBook} for an
extensive treatment of the subject). This result is a particular case of
\thm{per} part (ii) above and, since when $c\equiv0$ there is no
difference between studying bounded solutions and positive bounded
solutions, it contains the case 1) of Corollary \ref{cor:liouville}
when one restricts it to elliptic equations
(except for the fact that some stronger regularity assumptions
are required in \cite{P95}).

Another related topic is that of the characterization of polynomial
growing solutions
of periodic equations in the whole space. We stress out that the
LP - as intended in the present paper - is obtained as a
particular case considering polynomials of degree zero. In that
framework, using some homogenization techniques, Avellaneda and
Lin \cite{AL}, and later Moser and Struwe \cite{MS}, proved that
the LP holds for \eq{s-a} if $c\equiv0$ and the $a_{ij}$ are
periodic (with the same period). The results of \cite{AL} and
\cite{MS} have been improved by Kuchment and Pinchover \cite{KP}
to the general non-self-adjoint elliptic equation \eq{L=0}, with $L$
periodic and $a_{ij},\ b,\ c$ smooth (see also Li and Wang
\cite{LW} for the case $a_{ij}$ measurable and $b_i,c\equiv0$).
The restriction
to bounded solutions of Theorem 28 part 3 in \cite{KP} is
equivalent to the restriction to stationary solutions of
statements (ii) and (iii) of \thm{per} here.
However, the method used in \cite{KP} - based on the Floquet theory -
is quite involving and it is not clear to us whether or not it
adapts to operators with non-smooth coefficients.
%
%Following an approach based on the Floquet theory,
%they proved that the LP holds if the
%\emph{generalized principal eigenvalue} $\lambda_1(-L)$ is
%nonnegative (we refer to \cite{BRossi} and the references therein
%for the definition of $\lambda_1(-L)$ and its relation with
%$\lambda_p(-L)$). Since $\lambda_1(-L)\geq\lambda_p(-L)$ and
%equality may not hold if $L$ is not self-adjoint, the Liouville
%type result of \cite{KP} turns out to be more general than
%\thm{per} statement (iii) presented here, except for the fact that
%the operators considered in \cite{KP} have smooth coefficients.
%We remark that the result of \cite{KP}
%does not hold anymore if one relaxes the periodicity assumption,
%because it can be proved that the operator in Counterexample
%\ref{c-e} satisfies $\lambda_1(-L)=0$.

To our knowledge, no results about operators periodic in just one
variable, such as \thm{1per}, have been
previously obtained.

For nonlinear operators, the LP simply refer to uniqueness of
bounded (sometimes nonnegative) solutions in unbounded domains. The
following works - amongst many others - deal with this subject in the
elliptic case:
\cite{GiSp1}, \cite{BC-DN}, \cite{LZ} (semilinear
operators, see also \cite{B-V} for the parabolic case),
\cite{DD} (quasilinear operators), \cite{CC}, \cite{CL},
\cite{CDC}, \cite{R1} (fully nonlinear operators).\\

There is a vast literature on the problem of almost periodicity of
bounded solutions of linear equations with a.~p.~coefficients (see
e.~g.~\cite{amerio-prouse}, \cite{Fink}, \cite{Ph}, \cite{HM2}).
Usually ordinary differential equations or systems are considered,
often of the first order. As emphasized in \cite{apC-E}, some
authors made use in proofs of the claim that any bounded solution
in $\R$ of a second order linear elliptic equation with
a.~p.~coefficients has to be a.~p. This claim is false, as shown
by Counterexample \ref{c-e} and also by a counterexample in
\cite{apC-E}. There, the authors constructed an a.~p.~function
$c(x)$ such that the equation
$$u''+c(x)u=0\quad\text{in
}\R,$$ admits bounded solutions which are not a.~p. In their case,
the space of bounded solutions has dimension one and then the
LP holds. They also addressed the following open
question: if every solution of a linear equation in $\R$ with
a.~p.~coefficients is bounded are all solutions necessarily a.~p.?
Counterexample \ref{c-e} shows that the answer is no. A negative
answer was also given in \cite{HM1}, where the authors exhibite a class
of linear ordinary differential equations of order $n\geq2$ for
which all solutions are bounded in $\R$, yet no nontrivial
solution is a.~p. Thus, this also provides an example where the
LP does not hold, but it is not interesting in this
sense because the zero order term considered there is not nonpositive.

%%%%%%%%%%%%%%%%%%%%%%%%%%%%%%%%%%%%%%%%%%%%%%%%%%%%%%%%%%%%%%%%%%

\subsection{Organization of the paper}

In Section \ref{sec:per}, we consider the case when $P$ and $f$ are
periodic and we prove \thm{1per}, Corollary \ref{cor:liouville} and \thm{per}.
In order to prove the periodicity of any bounded
solution $u$, we show that the difference between $u$ and
its translation by one period is identically equal to 0. This is acheived
by passing to a limit equation and making use of a supersolution
$v$ with positive infimum. We
take $v\equiv1$ in the case of \thm{1per} and $v\equiv\varphi_p$ in
the case of \thm{per}. We further derive
the existence and uniqueness of bounded entire solutions to \eq{P=f}
when $P=\partial_t-L$ and $L$ is periodic and satisfies $\lambda_p(-L)>0$.

Section \ref{sec:c-e} is devoted to the construction of the
function $b$ of Counterexample \ref{c-e}, which will be defined by
an explicit recursive formula.

\thm{ap} is proved in Section \ref{sec:ap}. The basic idea to
prove statement (i) is that, up to subsequences, all subsequences
of a given sequence of translations of $u$ converge to a solution
of the same equation. Also, one can come back to the original
equation by translating in the opposite direction. Then, the
result follows from \thm{per} part (iii).

In Sections \ref{sec:Omega}, we derive results analogous to
Theorems \ref{thm:1per} and \ref{thm:per} for the
Dirichlet and the Robin problems in periodic domains.
There, the periodic \pe\ $\lambda_p(-L)$ is replaced respectively by
$\lambda_{p,D}(-L)$ (seev Section \ref{sec:D}) and
$\lambda_{p,\mc{N}}(-L)$ (see Section \ref{sec:N})
which take into account the boundary conditions. Existence and
uniqueness results are presented as well.

%%%%%%%%%%%%%%%%%%%%%%%%%%%%%%%%%%%%%%%%%%%%%%%%%%%%%%%%%%%%%%%%%%
%%%%%%%%%%%%%%%%%%%%%%%%%%%%%%%%%%%%%%%%%%%%%%%%%%%%%%%%%%%%%%%%%%

\section{The LP for periodic operators}\label{sec:per}

Let us preliminarily reclaim the notion of periodic \pe\ and
eigenfunction. If $L$ is periodic then the Krein Rutman theory
yields the existence of a unique real number $\lambda$, called
periodic \pe\ of $-L$ (in $\R^N$), such that the eigenvalue
problem
$$\left\{\begin{array}{l}
-L\varphi=\lambda\varphi \quad \text{in }\R^N\\
\varphi\text{ is periodic, with the same period as }L
\end{array}\right.$$
admits positive solutions. Furthermore, the positive solution
$\varphi$ is unique up to a multiplicative constant, and
it is called periodic principal eigenfunction. We denote by
$\lambda_p(-L)$ and $\varphi_p$ respectively the periodic \pe\
and eigenfunction of $-L$.

The next lemma is the key tool to prove our results for periodic operators.

\begin{lemma}\label{lem:1per}
Assume that the operator $P$ and the function $f$
are periodic in the $m$-th variable, with the same period $l_m$.
If there exists a bounded function $v$ satisfying
$$\inf_{\R^{N+1}}v>0,\qquad Pv=\phi\;\text{ for }x\in\R^N,\ t\in\R,$$
for some nonnegative
function $\phi\in L^\infty(\R^{N+1})$, then any bounded solution $u$
of \eq{P=f} is periodic in the $m$-th variable, with period $l_m$.
\end{lemma}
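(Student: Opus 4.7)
The plan is to show that $w(X) := u(X + l_m e_m) - u(X)$ vanishes identically; by applying the same argument to $-u$ (equivalently, $-w$) it is enough to prove $w \leq 0$. Because $P$ and $f$ are $l_m$-periodic in the $m$-th variable, $w$ is bounded and satisfies $Pw = 0$. Let
$$\alpha^* := \sup_{\R^{N+1}} \frac{w}{v} \in \R,$$
which is finite since $w \in L^\infty$ and $\inf v > 0$. I assume for contradiction that $\alpha^* > 0$ and pick $(X_n) \subset \R^{N+1}$ with $w(X_n)/v(X_n) \to \alpha^*$.

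Next I would translate by $-X_n$ and pass to a limit where the supremum is attained at the origin. Setting $u_n := u(\cdot + X_n)$, $v_n := v(\cdot + X_n)$ and similarly shifting the coefficients of $P$ and the data $f,\phi$, the uniform continuity and $L^\infty$-boundedness of $a_{ij}, b_i$ yield locally uniformly convergent subsequences; $c, f, \phi$ converge weakly-$*$ in $L^\infty_{loc}$; and interior parabolic $L^p_{loc}$ estimates produce $\tilde u, \tilde v \in W^{2,p}_{loc}$ bounded, solving $\tilde P \tilde u = \tilde f$ and $\tilde P \tilde v = \tilde \phi \geq 0$, with $\inf \tilde v \geq \inf v > 0$. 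Writing $\tilde w(X) := \tilde u(X + l_m e_m) - \tilde u(X)$ and $\tilde \psi := \alpha^* \tilde v - \tilde w$, I obtain $\tilde \psi \geq 0$, $\tilde \psi(0) = 0$, and $\tilde P \tilde \psi = \alpha^* \tilde \phi \geq 0$.

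Since no sign is assumed on $c$, the natural next move is to divide by $\tilde v$ to produce an operator with nonpositive zero-order coefficient. Setting $Z := \tilde \psi/\tilde v$, the product rule gives $\tilde P(\tilde v Z) = \tilde v\, \tilde L Z + Z\, \tilde P \tilde v$ with $\tilde L$ free of zero-order term; equating with $\alpha^* \tilde \phi$ and rearranging yields
$$\hat P Z = \frac{\alpha^* \tilde \phi}{\tilde v} \geq 0,$$
where $\hat P$ is parabolic, with locally bounded drift (thanks to $C^{1,\alpha}_{loc}$ regularity of $\tilde v$) and with zero-order coefficient $-\tilde \phi/\tilde v \leq 0$. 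The parabolic strong minimum principle, applied to the nonnegative $Z$ that vanishes at the interior point $X = 0$, forces $Z \equiv 0$ on the half-space $\{t \leq 0\}$; equivalently $\tilde w \equiv \alpha^* \tilde v$ there.

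The conclusion comes from iteration. If $m \neq N+1$, shifts by $e_m$ preserve $\{t \leq 0\}$, so
$$\tilde u(X + k l_m e_m) = \tilde u(X) + \alpha^* \sum_{j=0}^{k-1} \tilde v(X + j l_m e_m) \geq \tilde u(X) + k\alpha^* \inf v,$$
contradicting boundedness of $\tilde u$ as $k \to \infty$. If $m = N+1$, the same identity iterated backwards in time gives $\tilde u(x, -k l_{N+1}) \to -\infty$, again a contradiction. The main technical obstacle is handling the passage to the limit when $c$ (and the right-hand side) is only measurable; the conceptual crux is that the positive supersolution $v$, which in the absence of any sign assumption on $c$ might seem hard to exploit, provides exactly the gauge transformation needed to unlock the strong minimum principle.
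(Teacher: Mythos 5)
Your proof is correct and follows essentially the same route as the paper's: translate along a maximizing sequence of $\psi/v$ (your $w/v$), pass to a limit equation using interior $W^{2,1,p}_{loc}$ estimates and weak-$*$ compactness for the measurable data, divide by the positive limit supersolution to produce a parabolic operator whose zero-order coefficient $-\tilde\phi/\tilde v$ is nonpositive, and invoke the parabolic strong maximum principle to make the renormalized quotient identically extremal for $t\le 0$, whence iteration contradicts boundedness. The differences are purely cosmetic: the paper works directly with $w_\infty=\psi_\infty/v_\infty$ and shows it equals its maximum $k$ on $\{t\le0\}$ instead of showing $Z=\alpha^*-w_\infty$ vanishes there, and it closes via a diagonal extraction producing a diverging bounded sequence $(\zeta_h)$ rather than, as you do, by iterating the shift identity on the bounded limit $\tilde u$.
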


\begin{proof}
Let $u$ be a bounded solution of \eq{P=f}. Define the functions
$$\psi(X):=u(X+l_me_m)-u(X),\qquad w(X):=\frac{\psi(X)}{v(X)}.$$ We
want to show that they are nonpositive.
Assume by way of contradiction that
$k:=\sup_{\R^{N+1}}w>0,$ and consider a sequence $\seq{X}$ in
$\R^{N+1}$ such that $w(X_n)\to k$. Define the sequence of functions
$\psi_n(X):=\psi(X+X_n)$. Since the $\psi_n$ are uniformly bounded and
satisfy
\formulaI{psin}
\partial_t \psi_n-a_{ij}(X+ X_n)\partial_{ij}\psi_n-
b_i(X+X_n)\partial_i\psi_n-c(X+X_n)\psi_n=0\quad\text{in }\R^{N+1},
\formulaF
interior parabolic estimates together
with the Rellich-Kondrachov compactness theorem imply that (a
subsequence of) the sequence $\seq{\psi}$ converges locally uniformly in
$\R^{N+1}$ to a bounded function $\psi_\infty$ and that
$\partial_t\psi_n\to\partial_t\psi_\infty ,\
\partial_i\psi_n \to\partial_i\psi_\infty,\
\partial_{ij}\psi_n \to\partial_{ij}\psi_\infty$ weakly in $L^p_{loc}(\R^{N+1})$,
for any $p>1$.
Let $\t a_{ij},\t
b_i$ be the locally uniform limits and $\t c$ be the weak limit in
$L^p_{loc}(\R^{N+1})$ of a
converging subsequence respectively of $a_{ij}(X+ X_n),\
b_i(X+X_n)$ and $c(X+X_n)$. Thus, passing to the weak limit in
\eq{psin} we derive
$$\t P\psi_\infty=0\quad\text{in }\R^{N+1},$$
where
$$\t P:=\partial_t-\t a_{ij}(X)\partial_{ij}-\t
b_i(X)\partial_i -\t c(X).$$
Analogously, the functions $v(X+X_n)$ converge (up to subsequences)
locally uniformly in $\R^{N+1}$ to a function $v_\infty$ satisfying
$$\inf_{\R^{N+1}}v_\infty>0,\qquad\t P v_\infty\geq0\quad\text{in }\R^{N+1}.$$
The function $w_\infty:=\psi_\infty/v_\infty$ reaches its maximum
value $k$ at $0$. Moreover,
$$0=\frac{\t P\psi_\infty}{v_\infty}=\partial_t w_\infty-\t M w_\infty
+\frac{\t Pv_\infty}{v_\infty}w_\infty\quad\text{in }\R^{N+1},$$ where
the operator $\t M$ is defined by
$$\t M:=\t a_{ij}\partial_{ij}+(2v_\infty^{-1}\t a_{ij}\partial_j
{v_\infty}+\t b_i)\partial_i.$$ Since the term $(\t
Pv_\infty)/v_\infty$ is nonnegative, we can apply the parabolic
strong maximum principle to the function $w_\infty$ (see
\cite{Max} for the smooth case and \cite{Lady}, \cite{Lie} for the
case of strong solutions) and derive
$$\fa x\in\R^N,\ t\leq0,\quad
w_\infty(x,t)=k.$$ Using a diagonal method, we can find a
subsequence of $\seq{X}$ (that we still call $\seq{X}$) and a sequence
$(\zeta_h)_{h\in\N}$ in $[0,\sup u]$ such that
$$\fa h\in\N,\quad \limn u(-hl_me_m+X_n)=\zeta_h.$$
As a consequence,
$$\fa h\in\N,\quad
\zeta_h-\zeta_{h+1}=\psi_\infty(-(h+1)l_me_m)=kv_\infty(-(h+1)l_me_m),$$
and then $\lim_{h\to\infty}\zeta_h=-\infty$: contradiction.
We have shown that $w\leq0$, that is, $u(X+l_me_m)\leq u(X)$ for
$X\in\R^N$. The opposite
inequality can be obtained following the same arguments, with
$l_m$ with $-l_m$. This time, the contradiction reached is that
the sequence $(\zeta_h)_{h\in\N}$ as defined above goes to $+\infty$
as $h\to\infty$.
\end{proof}

\begin{proofof}{\thm{1per}}
Apply Lemma \ref{lem:1per} with $v\equiv1$.
\end{proofof}

\begin{proofof}{Corollary \ref{cor:liouville}}
If $u$ is a bounded solution to \eq{P=0} then \thm{1per}
implies that $u$ is periodic (in all the variables). In particular, it attains
its maximum $M$ and minimum $m$ in $\R^{N+1}$ at some points $(x_M,t_M)$ and
$(x_m,t_m)$ respectively. Hence, the \SMP\ implies that if $u_M\geq0$
then $u(x,t)=M$ for $t\leq t_M$
and $x\in\R^N$, otherwise $u(x,t)=m$ for $t\leq t_m$
and $x\in\R^N$. Therefore, $u$ is constant because it is periodic in $t$.
The statement then follows.
\end{proofof}

\begin{proofof}{\thm{per}}
(i) The function $v(x,t):=\varphi_p(x)$ is bounded and satisfies
$$\inf_{\R^{N+1}}v>0,\qquad Pv=\phi\ \text{ for }x\in\R^N,\ t\in\R,$$
with $\phi=\lp\varphi_p\geq0$. Hence, the statement is a
consequence of Lemma \ref{lem:1per}.

(ii) Up to replace $u$ with $-u$, it is not restrictive to assume
that $f\leq0$. Set
$$k:=\sup_{x\in\R^N\atop t\in\R} \frac{u(x,t)}{\varphi_p(x)}.$$
Since $u$ is periodic by (i) - with the same space period $(l_1\pp l_N)$
as $\varphi_p$ - it follows that there exists
$X_0\in[0,l_1)\times\cdots\times[0,l_{N+1})$ where
the nonnegative
function $w(x,t):=k\varphi_p(x)-u(x,t)$ vanishes. Furthermore,
$$Pw=k\lambda_p(-L)\varphi_p-f\geq0\ \text{ in }\R^{N+1}.$$
Therefore, the strong \MP\ and the time-periodicty of $w$ yield
$w\equiv0$, that is, $u\equiv k\varphi_p$ and $f\equiv0$.

(iii) Suppose that $\sup u\geq0$ (otherwise replace $u$ with $-u$).
Proceeding as in (ii), one can find a constant $k\geq0$ such that
the periodic function $w(x,t):=k\varphi_p(x)-u(x,t)$ is nonnegative, vanishes
at some point $X_0\in\R^{N+1}$ and satisfies
$Pw=k\lambda_p(-L)\varphi_p\geq0$. Once again, the strong \MP\
implies $w\equiv0$. Hence, $k\lambda_p(-L)\varphi_p\equiv0$, that is,
$k=0$ and then $u\equiv0$.
\end{proofof}

% \begin{corollary}\label{cor:perP}
% If $P=\partial_t-L$ and $L,\ f$ do not depend on $t$, then the
% result of \thm{per} holds for any bounded solution $u$ of \eq{P=f}.
% \end{corollary}

\begin{remark}\label{rem:c}\rm
If $L$ is periodic and $c\equiv0$ then $\lambda_p(-L)=0$, with
$\varphi_p\equiv1$. If $c\leq0$, $c\not\equiv0$, then
$\lambda_p(-L)>0$, as it is easily seen by applying the strong
\MP\ to $\varphi_p$. Hence, in the case $P=\partial_t-L$,
Corollary \ref{cor:liouville} is contained in
\thm{per} parts (ii) and (iii). Furthermore, the
existence and uniqueness result of Corollary \ref{cor:!} below apply when
$c\leq0$, $c\not\equiv0$.
\end{remark}

By \thm{per} part (iii), $\lambda_p(-L)>0$ implies the uniqueness
of bounded entire solutions of $\partial_t u-Lu=f$. Indeed, it is
also a sufficient condition for the existence when $f$ is bounded.

\begin{corollary}\label{cor:!}
If $P=\partial_t-L$, with $L$ periodic such that $\lambda_p(-L)>0$,
and $f\in L^\infty(\R^{N+1})$ then \eq{P=f} admits a unique
bounded solution.
\end{corollary}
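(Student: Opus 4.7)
The plan is: uniqueness is immediate from \thm{per}(iii); existence is obtained by approximating on time-slabs $\R^N\times(-n,+\infty)$ with the periodic principal eigenfunction playing the role of a global barrier.

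For uniqueness, if $u_1,u_2$ are two bounded entire solutions of \eq{P=f}, then $w:=u_1-u_2$ is bounded and satisfies $Pw=0$. Since $L$ is periodic with $\lambda_p(-L)>0$, \thm{per}(iii) gives $w\equiv0$.

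For existence, I would first build a uniform barrier from $\varphi_p$. The time-independent function $v(x,t):=\varphi_p(x)$ satisfies
\[
Pv=-L\varphi_p=\lambda_p(-L)\,\varphi_p,
\]
which is bounded below by $\lambda_p(-L)\inf\varphi_p>0$. Hence the choice $M:=\|f\|_\infty/(\lambda_p(-L)\inf\varphi_p)$ makes $Mv$ a global supersolution and $-Mv$ a global subsolution of $Pu=f$, both bounded by $M\|\varphi_p\|_\infty$. Then, for each $n\in\N$, I would solve the Cauchy problem
\[
Pu_n=f \text{ in }\R^N\times(-n,+\infty),\qquad u_n(\cdot,-n)=0,
\]
using standard parabolic well-posedness for equations on $\R^N$ with bounded measurable right-hand side and bounded coefficients (if necessary, reducing to a nonpositive zero-order term by the substitution $u\mapsto e^{-\gamma t}u$ with $\gamma$ large). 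Comparison of $u_n$ with $\pm M\varphi_p$ on each slab $\R^N\times[-n,T]$, via the parabolic maximum principle for bounded solutions, gives $|u_n|\le M\|\varphi_p\|_\infty$ uniformly in $n$.

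Finally, interior parabolic $L^p$-estimates yield uniform $W^{2,1,p}$ bounds for $\{u_n\}$ on every compact cylinder (once $n$ is large enough that the cylinder lies in the time-domain of $u_n$). A diagonal extraction — exactly of the kind already used in the proof of Lemma~\ref{lem:1per} — produces a subsequence converging in $C_{loc}$ and weakly in $W^{2,1,p}_{loc}$ to a bounded function $u$ that satisfies $Pu=f$ almost everywhere on $\R^{N+1}$, which is the required bounded entire solution. The only real subtlety is the uniform bound on the $u_n$: it rests on a maximum principle for bounded (sub)solutions on the unbounded slab $\R^N\times(-n,T)$, which is classical but needs the $e^{-\gamma t}$ rescaling since $c$ is not assumed nonpositive; everything else is a routine compactness/weak-limit argument.
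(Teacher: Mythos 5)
Your proof is correct and is a minor variant of the paper's. The two uniqueness arguments are identical (both invoke Theorem~\ref{thm:per}~(iii)), and both existence arguments rest on the same barrier $M\varphi_p$, with $M=\|f\|_\infty/\big(\lambda_p(-L)\inf\varphi_p\big)$, followed by interior $L^p$ parabolic estimates and a compactness/diagonal limit. The difference is in the choice of approximating problems: the paper solves Dirichlet problems on the bounded cylinders $B_r\times(-r,r)$ with zero lateral and initial data, so the barriers are compared to $u_r$ via the ordinary parabolic comparison principle on a compact domain, with no subtleties. You instead solve Cauchy problems on the unbounded slabs $\R^N\times(-n,\infty)$ with $u_n(\cdot,-n)=0$, which requires a Phragm\'en--Lindel\"of type maximum principle for bounded sub/supersolutions on $\R^N\times(-n,T)$ (after the $e^{-\gamma t}$ reduction to $c\le 0$). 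This is classical and you flag it correctly, but it introduces a layer the paper sidesteps by taking bounded domains; in compensation your approach avoids any coupling between the spatial and temporal truncation radii. Both routes are sound.
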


\begin{proof}
A standard method to construct entire solutions in the whole space is to
consider the limit as $r\to\infty$ of solutions $u_r$ of (for
instance) the Dirichlet problems
\formulaI{P=fr}
\left\{\begin{array}{ll}
Pu_r=f(x,t), & x\in B_r,\ t\in(-r,r)\\
u_r=0, & x\in\partial B_r,\ t\in(-r,r)\\
u_r=0, & x\in B_r,\ t=-r
\end{array}\right.
\formulaF
(here, $B_r$ denotes the ball in $\R^N$ with radius $r$ and centre
$0$).
The so obtained solution
is bounded provided that the family $(u_r)_{r>0}$ is uniformly
bounded. This will
follow from the strict positivity of $\lambda_p(-L)$.
Define the function
$$v(x):=\frac{\|f\|_{L^\infty(\R^{N+1})}}{\lambda_p(-L)\min_{\R^N}
\vp_p}\vp_p(x).$$
Since $-v$ and $v$ are respectively a sub and a supersolution of
\eq{P=fr}, the parabolic comparison principle yields
$$\fa r>0,\quad-v\leq u_r\leq v\ \text{ in }B_r\times(-r,r).$$
Thus, using interior
estimates and the embedding theorem, we can find a diverging sequence
$(r_n)_{n\in\N}$ such that $(u_{r_n})_{n\in\N}$ converges locally
uniformly in $\R^{N+1}$ to a bounded solution of \eq{P=f}.
The uniqueness result is an immediate consequence of \thm{per} part (iii).
\end{proof}

%\begin{remark}\rm
%In \thm{per} and Corollaries \ref{cor:liouville} and \ref{cor:!}, we
%can relax the assumption $a_{ij}\in W^{1,\infty}(\R^N)$ by
%$a_{ij}\in C(\R^N)$. Indeed, the Lipschitz continuity of the $a_{ij}$
%is only required in Lemma \ref{lem:1per} for the local uniform
%convergence of the $a_{ij}(\.+x_n)$. But, if $L$ is periodic, with
%period $(l_1,\pp,l_N)$, one can prove
%Lemma \ref{lem:1per} by translating the functions $u$ and $v$ by $z_n$
%instead of $x_n$ - where $\seq{z}$ is the sequence in $\Z
%l_1\times\cdots\times\Z l_N$
%such that $x_n-z_n\in[0,l_1)\times\cdots\times[0,l_N)$ -
%in such a way that $\t L=L$.
%\end{remark}

We remark that if $f$ is periodic then one can prove the existence
result of Corollary \ref{cor:!} by a standard functional method:
after regularizing the operator in order to write it in divergence
form, one considers the problem in the space of periodic functions
and, owing to \thm{per} part (iii), applies the Fredholm
alternative to the inverse operator. Also, note that by \thm{per}
part (ii), the equation $\partial_t u-\partial_{xx}u=1$ does not
admit entire bounded solutions and then the hypothesis
$\lambda_p(-L)>0$ is sharp for the existence result of Corollary
\ref{cor:!}.

%%%%%%%%%%%%%%%%%%%%%%%%%%%%%%%%%%%%%%%%%%%%%%%%%%%%%%%%%%%%%%%%%%
%%%%%%%%%%%%%%%%%%%%%%%%%%%%%%%%%%%%%%%%%%%%%%%%%%%%%%%%%%%%%%%%%%

\section{Counter-example when $L$ is almost periodic}\label{sec:c-e}

This section is devoted to the construction of Counterexample \ref{c-e}.
Note that, by the uniqueness of solutions of the Cauchy problem,
any non-constant solution of \eq{c-e} must be strictly monotone.

We first construct a discontinuous function $\sigma$, then we
modify it to obtain a Lipschitz continuous limit periodic function
$b$. Let us reclaim the definition of limit periodic functions,
which are a proper subset of a.~p.~functions.

\begin{definition}\label{def:lp}
We say that a function $\phi\in C(\R^N)$ is limit periodic if
there exists a sequence of continuous periodic functions
converging uniformly to $\phi$ in $\R^N$.
\end{definition}

We start defining $\sigma$ on the interval $(-1,1]$:
$$\sigma(x)=\left\{\begin{array}{ll}
-1&\text{ if }-1<x\leq0,\\
1&\text{ if }\;0<x\leq1.
\end{array}\right.$$
Then in $(-3,3]$ setting
$$\fa x\in(-3,-1],\qquad\sigma(x)=\sigma(x+2)-1,$$
$$\fa x\in(1,3],\qquad\sigma(x)=\sigma(x-2)+1,$$
and, by iteration,
\formulaI{iter-}\fa
x\in(-3^{n+1},-3^n],\qquad\sigma(x)=\sigma(x+2\.3^n)-\frac1{(n+1)^2},
\formulaF
\formulaI{iter+}\fa
x\in(3^n,3^{n+1}],\qquad\sigma(x)=\sigma(x-2\.3^n)+\frac1{(n+1)^2}.
\formulaF
By construction, the function $\sigma$ satisfies
$\norma\sigma=1+\sum_{n=1}^\infty n^{-2}$, and it is odd except
for the set $\Z$, in the sense that $\sigma(-x)=-\sigma(x)$ for
$x\in\R\meno\Z$.

\begin{proposition}\label{pro:lp}
There exists a sequence of bounded periodic functions
$(\phi_n)_{n\in\N}$ converging uniformly to $\sigma$ in $\R$ and
such that
$$\fa n\in\N,\qquad\phi_n\in C(\R\meno\Z),\qquad\phi_n\text{ has
  period }2\. 3^n.$$
\end{proposition}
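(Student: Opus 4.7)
The plan is to take $\phi_n$ to be the $(2\cdot 3^n)$-periodic extension of $\sigma\vert_{(-3^n,3^n]}$: for every $x\in\R$, letting $y_n(x)$ denote the unique element of $(-3^n,3^n]$ congruent to $x$ modulo $2\cdot 3^n$, I set $\phi_n(x):=\sigma(y_n(x))$. Periodicity with period $2\cdot 3^n$ and the bound $|\phi_n|\leq\|\sigma\|_\infty<\infty$ are immediate. For the continuity claim, a short induction on the level $n$ in \eq{iter+}--\eq{iter-} shows that $\sigma$ is constant on every half-open interval $(k,k+1]$ with $k\in\Z$, so its discontinuities are contained in $\Z$; since $2\cdot 3^n$ is itself an integer, the periodic extension $\phi_n$ inherits this property, giving $\phi_n\in C(\R\setminus\Z)$.

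The main content is the uniform convergence $\phi_n\to\sigma$. Iterating \eq{iter+} and \eq{iter-} yields, for every $x\in\R$, a terminating reduction to a point in $(-1,1]$: at each step one identifies the unique level $k$ with the current point in $(3^k,3^{k+1}]$ or $(-3^{k+1},-3^k]$, shifts by $\mp\,2\cdot 3^k$ and records a term $\pm 1/(k+1)^2$. The key observation is that a level-$k$ reduction lands the point in $(-3^k,3^k]$, so subsequent levels are strictly smaller, and each level is used at most once. This produces an expansion
\[
\sigma(x)=\sigma(x_0)+\sum_{k\in S(x)}\frac{\varepsilon_k(x)}{(k+1)^2},\qquad x_0\in(-1,1],\ S(x)\subset\N,\ \varepsilon_k(x)\in\{\pm 1\}.
\]

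Running the same procedure for $x$ and halting the first time the partial point lies in $(-3^n,3^n]$ uses exactly the levels $k\in S(x)$ with $k\geq n$; since every intermediate shift is a multiple of $2\cdot 3^n$, the stopping point is congruent to $x$ modulo $2\cdot 3^n$ and must therefore equal $y_n(x)$. Continuing the reduction from $y_n(x)$ then reproduces exactly the terms $k<n$, so that $\sigma(x)-\phi_n(x)=\sigma(x)-\sigma(y_n(x))$ is precisely the partial sum of the high-level terms, and
\[
\sup_{x\in\R}|\sigma(x)-\phi_n(x)|\leq\sum_{k=n}^\infty\frac{1}{(k+1)^2}\xrightarrow[n\to\infty]{}0.
\]
The main obstacle I expect is precisely this factorization step: carefully justifying that the reduction sequence for $x$ splits cleanly into a ``levels $\geq n$'' phase, whose output is $y_n(x)$, followed by a ``levels $<n$'' phase identical to the reduction of $y_n(x)$ itself. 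Everything else is bookkeeping against the convergent tail $\sum_{k\geq n}(k+1)^{-2}$.
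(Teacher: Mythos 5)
Your proof is correct and takes essentially the same route as the paper: you define the same $\phi_n$ (the $2\cdot 3^n$-periodic extension of $\sigma|_{(-3^n,3^n]}$) and arrive at the same tail bound $\sum_{k>n}k^{-2}$. The only difference is packaging: the paper establishes the bound by a one-step-at-a-time induction $(\mathcal{P}_i)$ on the ambient interval $(-3^{n+i},3^{n+i}]$, using the telescoping inequality $|\sigma(x)-\phi_n(x)|\le|\sigma(x)-\sigma(y)|+|\sigma(y)-\phi_n(y)|$ with $y=x\mp 2\cdot 3^{n+i}$, which automatically dispatches the ``factorization'' point you flag as the main obstacle, whereas you unroll the recursion entirely and argue that the strictly decreasing level sequence splits cleanly at $n$ -- a correct but slightly heavier version of the same mechanism.
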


\begin{proof}
Fix $n\in\N$. For $x\in(-3^n,3^n]$ set $\phi_n(x):=\sigma(x)$,
then extend $\phi_n$ to the whole real line by periodicity, with
period $2\. 3^n$. We claim that
$$\norma{\sigma-\phi_n}\leq\sum_{k=n+1}^\infty\frac1{k^2},$$
which would conclude the proof. We prove our claim by a recursive
argument, showing that the property
$$(\mathcal{P}_i)\qquad\fa
x\in(-3^{n+i},3^{n+i}],\qquad |\sigma(x)-\phi_n(x)|\leq
\sum_{k=n+1}^{n+i}\frac1{k^2}$$ holds for every $i\in\N$. Let us
check $(\mathcal{P}_1)$. By \eq{iter-} and \eq{iter+} we get
$$\sigma(x)=\left\{\begin{array}{ll}
\displaystyle \sigma(x+2\.3^n)-\frac1{(n+1)^2}&
\quad\text{if }-3^{n+1}<x\leq-3^n\\
\phi_n(x) & \quad\text{if } -3^n< x\leq 3^n\\
\displaystyle \sigma(x-2\.3^n)+\frac1{(n+1)^2}&
\quad\text{if }\; 3^n<x\leq 3^{n+1}\ .\\
\end{array}\right.$$
Property $(\mathcal{P}_1)$ then follows from the periodicity of $\phi_n$.

Assume now that $(\mathcal{P}_i)$ holds for some $i\in\N$.
Let $x\in(-3^{n+i+1},3^{n+i+1}]$. If $x\in(-3^{n+i},3^{n+i}]$
then $$|\sigma(x)-\phi_n(x)|\leq\sum_{k=n+1}^{n+i}\frac1{k^2}\leq
\sum_{k=n+1}^{n+i+1}\frac1{k^2}.$$
Otherwise, set
$$y:=\left\{\begin{array}{ll}
x+2\.3^{n+i} & \text{if }x<0\\
x-2\.3^{n+i} & \text{if }x>0\ .\\
\end{array}\right.$$
Note that $y\in(-3^{n+i},3^{n+i}]$ and $|x-y|=2\.3^{n+i}$.
Thus, \eq{iter-}, \eq{iter+}, $(\mathcal{P}_i)$ and the periodicity
of $\phi_n$ yield
\begin{equation*}\begin{split}
|\sigma(x)-\phi_n(x)| &\leq|\sigma(x)-\sigma(y)|+|\sigma(y)-\phi_n(y)|\\
&\leq \frac1{(n+i+1)^2}+\sum_{k=n+1}^{n+i}\frac1{k^2}\\
&= \sum_{k=n+1}^{n+i+1}\frac1{k^2}.
\end{split}\end{equation*}
This means that $(\mathcal{P}_{i+1})$ holds and then the proof is concluded.
\end{proof}

Note that $\sigma$ is not limit periodic because it is discontinuous
on $\Z$.

\begin{proposition}
The function $\sigma$ satisfies \formulaI{intf} \fa
x\geq1,\qquad\int_0^x\sigma(t)dt\geq\frac x{2(\log_3x+1)^2}.
\formulaF
\end{proposition}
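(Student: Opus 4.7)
The plan is to prove, by induction on $n \geq 0$, the stronger estimate
\begin{equation*}
\int_0^y \sigma(t)\, dt \geq \frac{y}{2(n+1)^2} \quad \text{for every } y \in [3^n, 3^{n+1}].
\end{equation*}
Since $\log_3 y + 1 \geq n+1$ on this interval, this estimate immediately implies \eq{intf}.

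The pivotal tool will be the self-similar identity
\begin{equation*}
\int_0^y \sigma(t)\, dt \;=\; \int_0^{|y - 2\cdot 3^n|} \sigma(t)\, dt \;+\; \frac{y - 3^n}{(n+1)^2}, \qquad y \in [3^n, 3^{n+1}].
\end{equation*}
To derive it I would substitute $s = t - 2\cdot 3^n$ in $\int_{3^n}^y \sigma$, using relation \eq{iter-} when $y \leq 2\cdot 3^n$ and \eq{iter+} when $y \geq 2\cdot 3^n$; the resulting integral lives on a subinterval of $[-3^n, 3^n]$ and is folded onto $[0, 3^n]$ via the oddness property $\sigma(-u) = -\sigma(u)$, valid on $\R \meno \Z$. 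The constant $I_n := \int_0^{3^n}\sigma$ enters twice with opposite signs and cancels; as a sanity check, evaluating the identity at $y = 3^{n+1}$ recovers the recurrence $I_{n+1} = I_n + \frac{2\cdot 3^n}{(n+1)^2}$, which is also what one reads off directly from the same oddness argument.

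The base case $n = 0$ is a direct computation using $\sigma \equiv 1,\, 0,\, 2$ on $(0,1],\ (1,2],\ (2,3]$, respectively. For the inductive step, given $y \in [3^n, 3^{n+1}]$, I set $w := |y - 2\cdot 3^n| \in [0, 3^n]$; applying $(\star_j)$ on each subinterval $[3^j, 3^{j+1}]$ with $j < n$ (together with the trivial identity $\int_0^w \sigma = w$ when $w \leq 1$) yields $\int_0^w \sigma \geq \frac{w}{2(n+1)^2}$, after using $(j+1)^2 \leq (n+1)^2$. Plugging this into the self-similar identity produces
\begin{equation*}
\int_0^y \sigma \;\geq\; \frac{w + 2(y - 3^n)}{2(n+1)^2},
\end{equation*}
and a one-line case check gives $w + 2(y - 3^n) = y$ if $y \leq 2\cdot 3^n$, while $w + 2(y - 3^n) = 3y - 4\cdot 3^n \geq y$ if $y \geq 2\cdot 3^n$. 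This closes the induction.

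The main obstacle will be deriving the self-similar identity cleanly: one has to dispatch correctly on whether $y$ lies in the first or second half of $(3^n, 3^{n+1}]$, keep track of the sign change produced by the odd symmetry (which holds only off $\Z$, although this is harmless as a set of measure zero), and verify the cancellation of $I_n$. Once that identity is in hand, both the base case and the inductive step are elementary algebra.
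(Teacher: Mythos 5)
Your proof is correct and takes essentially the same approach as the paper: both arguments establish the estimate by induction on the scale $3^n$, and the ``self-similar identity'' you isolate is precisely the decomposition $F(y)=F(|y-2\cdot 3^n|)+\tfrac{y-3^n}{(n+1)^2}$ that the paper obtains case-by-case from \eq{iter+} together with the oddness of $\sigma$ off $\Z$. The only cosmetic difference is that the paper runs an ordinary induction on the statement ``$F(y)\geq y/(2n^2)$ on $[0,3^n]$'' (base case $n=1$), whereas you run a strong induction on the statement restricted to $[3^n,3^{n+1}]$ (base case $n=0$); these are interchangeable ways of organizing the same computation.
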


\begin{proof}
For $y\in\R$, define $F(y):=\int_0^y\sigma(t)dt$. Let us
preliminarily show that, for every $n\in\N$, the following formula
holds: \formulaI{intn} \fa y\in[0,3^n],\qquad
F(y)\geq\frac y{2n^2}. \formulaF We shall do it by
iteration on $n$. It is immediately seen that \eq{intn} holds for
$n=1$. Assume that \eq{intn} holds for some
$n\in\N$. We want to prove that \eq{intn} holds with $n$ replaced by
$n+1$. If $y\in[0,3^n]$ then
$$F(y)\geq\frac y{2n^2}\geq\frac y{2(n+1)^2}.$$
If $y\in(3^n,2\.3^n]$ then, by computation,
$$F(y)=F(2\.3^n-y)+\int_{2\.3^n-y}^y\sigma(t)dt\geq\frac{2\.3^n-y}{2n^2}+
\int_{-(y-3^n)}^{y-3^n}\sigma(\tau+3^n)d\tau.$$ Using property \eq{iter+},
one sees that
\begin{equation*}\begin{split}
\int_{-(y-3^n)}^{y-3^n}\sigma(\tau+3^n)d\tau &=
\int_{-(y-3^n)}^0\sigma(\tau+3^n)d\tau
+\int_0^{y-3^n}\sigma(\tau-3^n)d\tau+\frac{y-3^n}{(n+1)^2}\\
&= \frac{y-3^n}{(n+1)^2}\;,
\end{split}\end{equation*}
where the last equality holds because $\sigma$ is odd except
in the set $\Z$. Hence,
$$F(y)\geq
\frac{2\.3^n-y}{2n^2}+\frac{y-3^n}{(n+1)^2}\geq\frac
y{2(n+1)^2}.$$
Let now $y\in(2\.3^n,3^{n+1}]$.
Since $F(2\.3^n)\geq 3^n(n+1)^{-2}$, as we
have seen before, and \eq{iter+} holds, it follows that
$$F(y)=F(2\.3^n)+\int_{2\.3^n}^y\sigma(t)dt\geq
\frac{3^n}{(n+1)^2}+F(y-2\.3^n)+\frac{y-2\.3^n}{n+1)^2}.$$
Using the hypothesis \eq{intn} we then get
$$F(y)\geq\frac{y-3^n}{(n+1)^2}+\frac{y-2\.3^n}{2n^2}\geq\frac
y{2(n+1)^2}.$$

We have proved that \eq{intn} holds for any $n\in\N$.
Consider now $x\geq1$. We can find an integer $n=n(x)$ such that
$x\in[3^{n-1},3^n)$.
Applying \eq{intn} we get $F(x)\geq x(2n^2)^{-1}$. Therefore,
since $n\leq\log_3x+1$, we infer that
$$F(x)\geq\frac x{2(\log_3x+1)^2}.$$
\end{proof}

In order to define the function $b$, we introduce the following auxiliary
function $z\in C(\R)$ vanishing on $\Z$: $z(x):=2|x|$ if
$x\in[-1/2,1/2]$, and it is extended by periodicity with period $1$
outside $[-1/2,1/2]$.
%$$z(x):=\left(1-\Big|2|x-[x]|-1\Big|\right),$$
%where $[x]$ denotes the integer part of $x$.
Then we set
$$b(x):=\sigma(x)z(x).$$
The definition of $b$ is easier to understand by its graph (see
Figure \ref{fb}).
%
%
%\immpiccola{z}{}{12}{graph of $z$}{z}
%
\psfrag{sigma}{$=\sigma(x)$} \psfrag{b}{$=b(x)$}
\immpiccola{b}{}{12}{graphs of $\sigma$ and $b$}{fb}

\begin{proposition}
The function $b$ is odd and limit periodic.
\end{proposition}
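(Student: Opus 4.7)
The plan is to handle oddness and limit periodicity separately, exploiting the fact that $b=\sigma z$ where $z$ is a continuous $1$-periodic even bump that cancels out the only pathology of $\sigma$, namely its jumps at integer points.

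For oddness, I would simply observe that the auxiliary function $z$ is even: it is defined as $2|x|$ on $[-1/2,1/2]$, which is even, and then extended by $1$-periodicity, which preserves evenness. Combined with the statement after \eq{iter+} that $\sigma(-x)=-\sigma(x)$ for $x\in\R\setminus\Z$, this gives $b(-x)=\sigma(-x)z(-x)=-\sigma(x)z(x)=-b(x)$ for $x\notin\Z$. On the integer set, $z$ vanishes, so $b\equiv0$ on $\Z$ and oddness is automatic there.

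For limit periodicity, I would define the candidate approximating sequence by $b_n(x):=\phi_n(x)z(x)$, where $\phi_n$ is the sequence given by Proposition \ref{pro:lp}. Each $b_n$ is periodic with period $2\cdot 3^n$, because $\phi_n$ has this period and $z$ has period $1$, which divides $2\cdot 3^n$. The essential point is the \emph{continuity} of $b_n$: by Proposition \ref{pro:lp}, $\phi_n$ is continuous on $\R\setminus\Z$ and (being a bounded periodic step function with only integer jumps) uniformly bounded. Since $z$ is continuous on $\R$ with $z(k)=0$ for every $k\in\Z$, the product $\phi_n z$ is continuous at every integer point as well, hence on all of $\R$.

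Once continuity and periodicity of $b_n$ are in place, the uniform convergence is straightforward: from $\|z\|_\infty\le 1$ and the estimate of Proposition \ref{pro:lp},
\[
\|b-b_n\|_\infty=\|(\sigma-\phi_n)z\|_\infty\le\|z\|_\infty\|\sigma-\phi_n\|_\infty\le\sum_{k=n+1}^\infty\frac{1}{k^2}\longrightarrow0.
\]
So $b$ is the uniform limit of continuous periodic functions, and thus limit periodic in the sense of Definition \ref{def:lp}. The only subtle step is the continuity of $b_n$ at integers; everything else is a routine verification of parity and of the chain of inequalities above.
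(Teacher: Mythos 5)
Your proof is correct and follows essentially the same route as the paper: oddness via $\sigma(-x)=-\sigma(x)$ off $\Z$ together with $b\equiv 0$ on $\Z$, and limit periodicity by taking $\psi_n=\phi_n z$, noting the vanishing of $z$ on $\Z$ restores continuity and that $\|b-\psi_n\|_\infty\le\|\sigma-\phi_n\|_\infty\to 0$. The only cosmetic additions are your explicit remark that $z$ is even and the quantitative tail bound $\sum_{k>n}k^{-2}$, neither of which changes the substance.
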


\begin{proof}
Let us check that $b$ is odd. For $x\in\Z$ we find
$b(-x)=0=-b(x)$, while, for $x\in\R\meno\Z$,
$$b(-x)=\sigma(-x)z(-x)=-\sigma(x)z(x)=-b(x).$$
In order to prove that $b$ is limit periodic, consider the sequence of
periodic functions
$(\phi_n)_{n\in\N}$ given by Proposition
\ref{pro:lp}. Then define
$$\psi_n(x):=\phi_n(x)z(x).$$
Clearly, the functions $\psi_n$ are continuous (because $z$ vanishes on $\Z$)
and periodic, with period
$2\.3^n$ (because $z$ has period $1$). Also, for $n\in\N$,
$$|b-\psi_n|=|\sigma-\phi_n|z\leq|\sigma-\phi_n|.$$
Therefore, $\psi_n$ converges uniformly to $b$ as $n$ goes to
infinity.
\end{proof}

\begin{proposition}\label{pro:c-e}
All solutions of \eq{c-e} are bounded and they are generated
by $u_1\equiv1$ and a non-a.~p.~function $u_2$.
\end{proposition}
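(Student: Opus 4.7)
The plan is to make the solution space completely explicit. Since \eq{c-e} is $(u')'+b(x)u'=0$, integrating the linear first-order ODE satisfied by $u'$ shows that every solution takes the form $u=\alpha+\beta u_2$ with
$$u_2(x):=\int_0^x e^{-B(s)}\,ds,\qquad B(s):=\int_0^s b(t)\,dt,$$
so $u_1\equiv 1$ and $u_2$ span the solution space. Since $u_2'=e^{-B}>0$, $u_2$ is strictly increasing, and the boundedness of every solution reduces to showing that $e^{-B}$ is integrable on both half-lines, i.e.~that $B(x)\to+\infty$ fast enough as $x\to\pm\infty$.

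The crucial step is to transfer the lower bound \eq{intf} on $\int_0^x\sigma$ to $B$. I would exploit that $\sigma$ is constant on each unit interval $(k,k+1]$, $k\in\Z$, while $z$ is $1$-periodic and nonnegative with mean $1/2$. For integer arguments this gives $B(n)=\tfrac12\int_0^n\sigma(t)\,dt$ exactly, and for general $x$ the correction $B(x)-B(\lfloor x\rfloor)$ is controlled by $\norma\sigma$. Combining this with \eq{intf}, and using that $\sigma$ is odd off $\Z$ to cover the negative half-line, yields a bound of the form $B(x)\geq c\,|x|/(\log(|x|+1))^2-C$ for $|x|$ large, which is far more than enough to make $e^{-B}$ integrable at both infinities. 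Therefore $u_2$ is bounded, and so is every solution.

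It remains to show $u_2$ is not almost periodic. The key observation is that $u_2$ is strictly monotone and bounded, so the limits $L_\pm:=\lim_{x\to\pm\infty}u_2(x)$ exist and are finite, with $L_-<u_2(0)=0<L_+$. I would then invoke the elementary fact that a bounded a.p.~function that converges to a limit at $+\infty$ must be constant: by Definition \ref{def:ap}, for any sequence $X_n\to\infty$ a subsequence $u_2(\,\cdot\,+X_{n_k})$ converges uniformly to some $\phi$; since pointwise $u_2(x+X_{n_k})\to L_+$, one has $\phi\equiv L_+$, and evaluating the uniform convergence at the point $x=-X_{n_k}$ forces $u_2(0)=L_+$, contradicting $L_+>0$. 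Hence $u_2$ cannot be a.p., which, together with the fact that a.p.~functions form a vector space containing $u_1$, shows that no nontrivial combination $\alpha+\beta u_2$ with $\beta\neq 0$ is a.p.

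The main obstacle I anticipate is the second paragraph, i.e.~extracting a useful lower bound on $B$ from the estimate on $\int_0^x\sigma$: the multiplicative factor $z$ in $b=\sigma z$ could conceivably re-weight the positive and negative pieces of $\sigma$ within a unit interval and wreck the bound. What rescues the argument is precisely that $\sigma$ is piecewise constant on the same grid on which $z$ is $1$-periodic, so that on each unit interval $z$ factors out as the single positive number $1/2$, merely halving \eq{intf} rather than destroying it.
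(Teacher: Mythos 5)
Your proposal is correct and follows essentially the same route as the paper: the same explicit description of the solution space with $u_2(x)=\int_0^x e^{-B(y)}\,dy$, the same key transfer of \eq{intf} to $B$ via the observation that $\sigma$ is constant on unit intervals and $z$ has period $1$ with mean $1/2$, so $B(m)=\tfrac12\int_0^m\sigma$ for $m\in\Z$, and the same use of the oddness of $b$ (hence of $u_2$) to cover the negative half-line. The only difference is cosmetic: you spell out why a strictly monotone bounded function cannot be almost periodic (via the limits $L_\pm$ and evaluating the uniform convergence at $x=-X_{n_k}$), whereas the paper asserts this in one line; that fact is indeed the underlying reason and your justification is valid.
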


\begin{proof}
The two-dimensional space of solutions of \eq{c-e} is generated by
$u_1\equiv1$ and
$$u_2(x):=\int_0^x\exp\left(-\int_0^yb(t)dt\right)dy.$$
Since $u_2$ is strictly increasing, it cannot be a.~p. So, to
prove the statement it only remains to show that $u_2$ is bounded.
By construction, it is clear that, for $m\in\Z$,
$$\int_0^mb(t)dt=\frac12\int_0^m\sigma(t)dt.$$
Consequently, by \eq{intf}, we get for $x\geq 1$
$$\int_0^x b(t)dt=\frac12\int_0^{[x]}
\sigma(t)dt+\int_{[x]}^xb(t)dt\geq\frac{x-1}{4(\log_3x+1)^2}-\norma{b}$$
and then \begin{equation*}\begin{split} 0\leq u_2(x) &\leq
e^{\norma{b}}
\int_0^x\exp\left(-\frac{y-1}{4(\log_3y+1)^2}\right)dy\\
& \leq e^{\norma{b}}
\int_0^{+\infty}\exp\left(-\frac{y-1}{4(\log_3y+1)^2}\right)dy.
\end{split}\end{equation*}
Since $b$ is odd, it follows that $u_2$ is odd too and then it is
bounded on $\R$.
\end{proof}

\begin{remark}\rm
The function $b=\sigma z$ we have constructed before is uniformly
Lipschitz continuous, with Lipschitz constant equal to
$2\|\sigma\|_{L^\infty(\R)}$.  Actually, one could use a suitable
$C^\infty$ function instead of $z$ in order to obtain a function
$b\in C^\infty(\R)$.
\end{remark}

\begin{remark}\rm
The reason why the LP fails to hold in the
a.~p.~case is that, as shown by the previous counterexample, an
a.~p.~linear equation with nonpositive zero order coefficient may
admit non-a.~p.~bounded solutions in the whole space. Instead, the
space of a.~p.~solutions of \eq{L=0}, with $c\leq0$ and without
any almost periodicity assumptions on $L$, has at most dimension
one, that is, the LP holds if all bounded solutions
are a.~p. More precisely, the result of Corollary
\ref{cor:liouville} holds true if one requires $u$ to be a.~p.,
even by dropping the periodicity assumption on $L$. To see this,
consider an a.~p.~solution $u$ of \eq{L=0}. Up to replace $u$ with
$-u$, we can assume that $U:=\sup u\geq0$. Let $\seq{x}$ be a
sequence in $\R^N$ such that $u(x_n)\to U$. Then, up to
subsequences, the functions $u_n(x):=u(x+x_n)$ converge locally
uniformly in $x\in\R^N$ to a solution $u_\infty$ of a linear
equation $-\t L=0$ in $\R^N$, with nonpositive zero order term
(see the arguments in the proof of Lemma \ref{lem:1per}). The
\SMP\ then yields $u_\infty\equiv U$. Since the convergence of a
subsequence of $u_n$ is also uniform in $\R^N$, by the almost
periodicity of $u$, we find that $u\equiv U$. Therefore, the
conclusion of Corollary \ref{cor:liouville} holds.
\end{remark}

%%%%%%%%%%%%%%%%%%%%%%%%%%%%%%%%%%%%%%%%%%%%%%%%%%%%%%%%%%%%%%%%%%
%%%%%%%%%%%%%%%%%%%%%%%%%%%%%%%%%%%%%%%%%%%%%%%%%%%%%%%%%%%%%%%%%%

\section{Sufficient conditions for the almost periodicity of solutions}
\label{sec:ap}

\begin{proofof}{\thm{ap}}
(i) consider an arbitrary sequence $\seq{X}=((x_n,t_n))_{n\in\N}$ in
$\R^N\times\R$. Since
$a_{ij},\ b_i,\ c$ and $f$ are a.~p.~(because $a_{ij},b_i,c\in
C(\R^N)$ are periodic)
there exists a subsequence of $\seq{X}$
(that we still call $\seq{X}$) such that $a_{ij}(x+x_n),\
b_i(x+x_n),\ c(x+x_n)$ and $f(x+x_n,t+t_n)$ converge uniformly in
$x\in\R^N,\ t\in\R$. We claim that $u(X+X_n)$ converges uniformly in
$X\in\R^{N+1}$ too. Assume by contradiction that this is not the
case. Then, there exist $\e>0$, a sequence
$\seq{Y}=((y_n,\tau_n))_{n\in\N}$ in $\R^N\times\R$ and
two subsequences $\seq{X^1}$ and $\seq{X^2}$ of $\seq{X}$ such
that \formulaI{assurdo}\fa n\in\N,\qquad
|u(Y_n+X_n^1)-u(Y_n+X_n^2)|>\e. \formulaF For $\sigma=1,2$ set
$\seq{Z^\sigma}:=(Y_n+X_n^\sigma)_{n\in\N}$. Applying again the definition
of almost periodicity, we can find a common sequence
$(n_k)_{k\in\N}$ in $\N$ such that, for $\sigma=1,2$, the functions
%$a_{ij}(X+Z^\sigma_{n_k}),\ b_i(X+Z^\sigma_{n_k}),\
%c(X+Z^\sigma_{n_k})$ and
$f(X+Z^\sigma_{n_k})$ converge to some functions
%$a_{ij}^\sigma,\ b_i^\sigma,\ c^\sigma$ and
$f^\sigma$ uniformly in
$X\in\R^{N+1}$. As
$f(X+X_n)$ converges uniformly in $X\in\R^{N+1}$, we see that
$$\fa x\in\R^{N+1},\quad
f^1(X)=\lim_{k\to\infty}f(X+Y_{n_k}+X_{n_k}^1)=
\lim_{k\to\infty}f(X+Y_{n_k}+X_{n_k}^2)=f^2(X).$$
Let $(\eta_k)_{k\in\N}$ be a sequence in
$[0,l_1)\times\cdots\times[0,l_N)$ such that
$y_{n_k}+x^1_{n_k}-\eta_k\in\prod_{i=1}^N l_i\Z$ and let $\eta$ be the limit of
(a subsequence of) $(\eta_k)_{k\in\N}$. Owing to the periodicity and the
uniform continuity of $c$, we get
$$c(x+\eta)=\lim_{k\to\infty}c(x+\eta_k)=
\lim_{k\to\infty}c(x+y_{n_k}+x^1_{n_k})=
\lim_{k\to\infty}c(x+y_{n_k}+x^2_{n_k}),$$
uniformly in $x\in\R^N$. Analogously, for $\sigma=1,2$,
$$\lim_{k\to\infty}a_{ij}(x+y_{n_k}+x^\sigma_{n_k})=a_{ij}(x+\eta),\qquad
b_i(x+y_{n_k}+x^\sigma_{n_k})=b_i(x+\eta),$$
uniformly with respect to $x\in\R^N$. By standard
parabolic estimates and compact injection theorem, it follows that
there exists a subsequence of $(n_k)_{k\in\N}$ (that we still call
$(n_k)_{k\in\N}$) such that, for $\sigma=1,2$, the sequences
$u^\sigma_k:=u(\.+Z^\sigma_{n_k})$ converge locally uniformly in $\R^{N+1}$ to
some functions $u^\sigma$ and $\partial_t u^\sigma_k\to\partial_t
u^\sigma,\ \partial_i u^\sigma_k\to\partial_i u^\sigma
,\ \partial_{ij}u^\sigma_k\to\partial_{ij}u^\sigma$
weakly in $L^p_{loc}(\R^{N+1})$, for $p>1$. Passing to the weak limit
in the equations satisfied by the $u^\sigma_k$, we infer that
the $u^\sigma$ satisfy
\formulaI{uj}
\inf_{\R^{N+1}}u\leq u^\sigma\leq\sup_{\R^{N+1}}u, \qquad
\partial_t u^\sigma-L_\eta u^\sigma=f^1\quad\text{in }\R^{N+1},
\formulaF
where
$$L_\eta:=a_{ij}(\.+\eta)\partial_{ij}+b_i(\.+\eta)\partial_i+c(\.+\eta).$$
Clearly, if $\varphi_p$
is the periodic principal eigenfunction of $-L$, then
$\varphi_p(\.+\eta)$ is the periodic principal eigenfunction of $-L_\eta$.
This shows that $\lambda_p(-L_\eta)=\lambda_p(-L)\geq0$. As
$\partial_t(u^1-u^2)-L_\eta(u^1-u^2)=0$ in $\R^{N+1}$, statements (ii)
and (iii) of
\thm{per} yields
\formulaI{u12}
\fa x\in\R^N,\ t\in\R,\quad
u^1(x,t)-u^2(x,t)=k\varphi_p(x+\eta),
\formulaF
for some $k\in\R$. In order to show that
$k=0$, we come back to the original equation. For $\sigma=1,2$,
the following limits hold uniformly in $X=(x,y)\in\R^N\times\R$:
$$\lim_{k\to\infty}f^1(X-Z_{n_k}^\sigma)=
\lim_{k\to\infty}f((X-Z_{n_k}^\sigma)+Z_{n_k}^\sigma)=f(X),$$
$$\lim_{k\to\infty}a_{ij}(x+\eta-y_{n_k}-x_{n_k}^\sigma)=a_{ij}(x),\qquad
\lim_{k\to\infty}b_i(x+\eta-y_{n_k}-x_{n_k}^\sigma)=b_i(x),$$
$$\lim_{k\to\infty}c(x+\eta-y_{n_k}-x_{n_k}^\sigma)=c(x).$$
Therefore, with usual arguments, we see that,
for $\sigma=1,2$, $u^\sigma(\.-Z_{n_k}^\sigma)$ converges (up to subsequences)
locally uniformly to a function $v^\sigma$ satisfying
\formulaI{vj}
\inf_{\R^{N+1}}u^\sigma\leq v^\sigma\leq\sup_{\R^{N+1}}u^\sigma,
\qquad Pv^\sigma=f\quad\text{in }\R^{N+1}.
\formulaF
Hence, $P(u-v^\sigma)=0$ and
then, again by \thm{per} parts (ii)-(iii), there exists a constant
$h^\sigma\in\R$ such that $u-v^\sigma\equiv h^\sigma\varphi_p$. Since
$\inf_{\R^{N+1}}u\leq v^\sigma\leq\sup_{\R^{N+1}}u$ by \eq{uj} and \eq{vj}, we
infer that $h^1=h^2=0$, that is, $v^1\equiv v^2\equiv u$. Consequently,
$$\inf_{\R^{N+1}}u^1=\inf_{\R^{N+1}}u^2=\inf_{\R^{N+1}}u,\qquad
\sup_{\R^{N+1}}u^1=\sup_{\R^{N+1}}u^2=\sup_{\R^{N+1}}u,$$ and then, by
\eq{u12}, $u_1\equiv u_2$. This is a contradiction because, by
\eq{assurdo}, $|u^1(0)-u^2(0)|\geq\e$.

(ii) Up to replace $u$ with $-u$, we can assume that $f\leq0$. Set
$$k:=\sup_{x\in\R^N\atop t\in\R}\frac{u(x,t)}{\varphi_p(x)}$$
and $v(x,t):=k\varphi_p(x)-u(x,t)$. Thus, $v\geq0$ and there
exists a sequence $\seq{X}=((x_n,t_n))_{n\in\N}$ in $\R^N\times\R$
such that $\limn v(X_n)=0$.
Arguing as above, we find that (up to subsequences) $v(\.+X_n)$
converges localy uniformly to a
nonnegative function $\t v$ satisfying
$$\t v(0)=0,\qquad
\partial_t\t v-a_{ij}(\.+\eta)\partial_{ij}\t v-b_i(\.+\eta)\partial_i\t v-
c(\.+\eta)\t v\geq0 \quad\text{in }\R^{N+1},$$ for some
$\eta\in[0,l_1)\times\cdots\times[0,l_N)$. Applying the strong \MP,
we get $\t v(x,t)=0$ for $x\in\R^N,\ t\leq0$. As $v$ is a.~p.~by
part (i), we infer that $\limn v(X+X_n)=\t v(X)$ uniformly with respect to
$X\in\R^{N+1}$. Thus, $\lim_{t\to-\infty}v(x,t)=0$ uniformly in $x\in\R^N$.
Again by the almost periodicity, we can find a sequence $\seq{t}$ in
$\R$ tending to $-\infty$ and such that $v(x,t+t_n)$ converges
uniformly with respect to $(x,t)\in\R^N\times\R$. Since
$$\fa x\in\R^N,\ t\in\R,\quad\limn v(x,t+t_n)=0,$$
we derive $v\equiv0$.
\end{proofof}

Corollary \ref{cor:!} and \thm{ap} part (i) imply the existence
of a unique a.~p.~solution of \eq{P=f} when $P=\partial_t-L$, $L$ is periodic,
$\lambda_p(-L)>0$ and $f$ is a.~p.

We conclude this section with a result concerning solutions of
\eq{P=f} when $P$ is periodic and $f$ is uniformly continuous ($UC$)
and a.~p.~in just one variable, i.~e.~there exists
$m\in\{1,\cdots,N+1\}$ such that, for any $(X_1\pp
X_{m-1},X_{m+1}\pp X_{N+1})\in\R^{N+1}$, $X_m\mapsto f(X_1\pp X_m\pp
X_{N+1})$ is a.~p.

\begin{theorem}\label{thm:ap1}
Let $u$ be a bounded solution of \eq{P=f}, with $f\in UC(\R^{N+1})$
a.~p.~in the $m$-th variable and either $P$ periodic in the
$m$-th variable, $c\leq0$, or $P=\partial_t-L$, $L$ periodic,
$\lambda_p(-L)\geq0$. Then, $u$ is a.~p.~in the $m$-th variable.
\nota{check}
\end{theorem}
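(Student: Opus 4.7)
My plan is to parallel the proof of Theorem \ref{thm:ap} part (i), now restricted to translations in the $e_m$-direction. I want to show that from any sequence $(s_n) \subset \R$ one can extract a subsequence along which $u(X + s_n e_m)$ converges uniformly in $X \in \R^{N+1}$; combined with the uniform continuity of $u$ (from interior parabolic estimates), Bochner's criterion will then yield almost periodicity of $u$ in $X_m$. First, I combine the uniform continuity of $f$ with its almost periodicity in $X_m$, via a diagonal argument over a countable dense subset of the remaining variables, to extract a subsequence along which $f(\cdot + s_n e_m) \to f^*$ uniformly on $\R^{N+1}$. Using compactness of $[0,l_m]$, I extract further so that $s_n \bmod l_m \to r^*$, whence the uniform continuity of $a_{ij}, b_i$ and the $L^p_{loc}$-continuity of translation give uniform, respectively weak $L^p_{loc}$, convergence of the shifted coefficients to their shifts by $r^* e_m$.

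Suppose for contradiction that $u(\cdot + s_n e_m)$ is not uniformly Cauchy. Then there exist $\varepsilon > 0$, $(Y_n) \subset \R^{N+1}$, and sub-subsequences $(s_n^1), (s_n^2)$ of $(s_n)$ with $|u(Y_n + s_n^1 e_m) - u(Y_n + s_n^2 e_m)| > \varepsilon$. Set $Z_n^\sigma := Y_n + s_n^\sigma e_m$. By interior parabolic estimates and diagonalization, along a common subseq $(n_k)$, $u(\cdot + Z_{n_k}^\sigma) \to u^\sigma$ locally uniformly with derivatives converging weakly in $L^p_{loc}$. Since $s_{n_k}^1 - s_{n_k}^2 \to 0$ modulo $l_m$ and the coefficients of $P$ are periodic in $X_m$ with period $l_m$, the limits of the shifted coefficients coincide for $\sigma = 1,2$; the uniform convergence of $f(\cdot + s_n e_m) \to f^*$ forces the limits of the shifted RHS to coincide as well. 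Hence $u^1, u^2$ satisfy the same limit equation $\tilde P u^\sigma = g$, where $\tilde P$ is of the same structural type as $P$: in case (b), $\tilde P = \partial_t - L_\xi$ with $\lambda_p(-L_\xi) = \lambda_p(-L) \ge 0$; in case (a), $\tilde P$ is periodic in $X_m$ with $\tilde c \le 0$. Theorem \ref{thm:per}(ii)-(iii) (case b), or Theorem \ref{thm:1per} combined with the \SMP\ (case a), then reduces $u^1 - u^2$ to a multiple $k\varphi_p^\xi$ of a periodic principal eigenfunction (case b) or to a constant (case a).

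Next I apply the back-to-original step from the proof of Theorem \ref{thm:ap}: set $v_k^\sigma(X) := u^\sigma(X - Z_{n_k}^\sigma)$, extract a locally uniformly convergent subseq with limit $v^\sigma$, and verify that $Pv^\sigma = f$ in $\R^{N+1}$ with $\inf u \le v^\sigma \le \sup u$. The coefficients of the shifted equation tend to those of $P$ since the net translation is $o(1)$ modulo the lattice, while the backward uniform convergence $f^*(\cdot - s_n e_m) \to f$ supplies the convergence of the RHS. Then $u - v^\sigma$ solves the homogeneous equation and, by Theorem \ref{thm:per}(ii) or Corollary \ref{cor:liouville}, equals $h^\sigma \varphi_p$ (resp.\ a constant $h^\sigma$); the bounds $\inf u \le v^\sigma \le \sup u$ force $h^\sigma = 0$, giving $v^\sigma \equiv u$. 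Consequently $\sup u^\sigma = \sup u$ and $\inf u^\sigma = \inf u$, and these equalities together with the description of $u^1 - u^2$ force $u^1 \equiv u^2$, contradicting $|(u^1 - u^2)(0)| \ge \varepsilon$.

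The hard part will be justifying $Pv^\sigma = f$ in the backward step. In Theorem \ref{thm:ap} this used uniform convergence $f(\cdot + Z_{n_k}^\sigma) \to g$, which came from the full almost periodicity of $f$. Here $f$ is almost periodic only in $X_m$, so uniform convergence under the general shifts $Z_{n_k}^\sigma = Y_{n_k} + s_{n_k}^\sigma e_m$ is not automatic. The argument must exploit that the non-$e_m$ component $Y_{n_k}$ is common to both $\sigma$'s and cancels under the backward translation, while the forward/backward uniform convergence in the $e_m$-direction supplies convergence of the remaining shift. A further subtlety, particularly in case (a), is that the partial periodicity of $\tilde P$ does not yield a full Liouville statement, so the appeal to Corollary \ref{cor:liouville} must be made at the level of the original $P$ (via the backward step above) rather than directly on $\tilde P$.
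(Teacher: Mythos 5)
Your plan correctly identifies the paper's intended strategy: combine Lemma \ref{lem:AA} with the Bochner-criterion contradiction argument from the proof of Theorem \ref{thm:ap}(i). But the opening claim overshoots Lemma \ref{lem:AA} in a way that can fail. You assert that the diagonal argument produces a subsequence with $f(\cdot+s_{n_k}e_m)\to f^*$ \emph{uniformly on $\R^{N+1}$}. What uniform continuity plus separate almost periodicity actually gives (and what Lemma \ref{lem:AA} carefully claims) is convergence that is uniform in $X_m$ but only pointwise --- at best locally uniform, via equicontinuity --- in the remaining variables. Global uniformity over an unbounded range of $X_\perp$ would require the family $\{f(X_\perp,\cdot)\}_{X_\perp}$ to be precompact in $L^\infty(\R)$, which is not implied by $UC$. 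A concrete obstruction: with $\alpha_k$ a dense sequence in $[1,2]$ and $f(x,t)$ the piecewise-linear interpolation in $x$ between $\sin(\alpha_{\lfloor x\rfloor}t)$ and $\sin(\alpha_{\lceil x\rceil}t)$, one gets a bounded Lipschitz (hence $UC$) function whose every $t$-slice is a.\ p., yet no subsequence of $s_n=2\pi n$ makes $f(\cdot,\cdot+s_n)$ uniformly Cauchy on $\R^2$, because that would force $m\alpha_k$ to be uniformly close to $\Z$ for the \emph{whole} dense family $\{\alpha_k\}$ and some nonzero integer $m$.

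This is why your attempt to prove that $u(\cdot+s_n e_m)$ is uniformly Cauchy on $\R^{N+1}$ is both harder than the theorem requires and, as you yourself sense, not reachable by this route: the sequence $(Y_n)$ in your contradiction hypothesis then ranges over all of $\R^{N+1}$, and the shifts $f(\cdot+Z^\sigma_{n_k})=f(\cdot+Y_{n_k}+s^\sigma_{n_k}e_m)$ carry a translation $Y_{n_k}$ in the non-$m$ variables along which $f$ has no compactness. The correct target is the statement in the theorem as written: for each fixed tuple $\xi_\perp=(X_1,\dots,X_{m-1},X_{m+1},\dots,X_{N+1})$, the one-variable function $X_m\mapsto u(\xi_\perp,X_m)$ is a.\ p. Then $Y_n$ is confined to the line $\xi+\R e_m$, so $Z^\sigma_{n_k}$ is a fixed translation by $\xi$ composed with an $e_m$-translation; Lemma \ref{lem:AA} applied to $f(\cdot+\xi)$ provides both the forward limit $f(\cdot+Z^\sigma_{n_k})\to g$ (uniform in $X_m$, locally uniform in the other variables and thus enough to pass to limits in the equation) and the backward step $g(\cdot-Z^\sigma_{n_k})\to f$ in the same topology, so the difficulty you flag in your last paragraph evaporates once the goal is scaled back.

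The other issue you raise --- that in case (a), $\tilde P$ (and indeed the original $P$) is periodic only in the $m$-th variable, so neither Corollary \ref{cor:liouville} nor the $\sup/\inf$ argument directly forces $u^1\equiv u^2$ --- is a genuine gap, and your proposed repair (appeal to Corollary \ref{cor:liouville} at the level of $P$) does not close it, because that corollary requires full periodicity which $P$ lacks in case (a). Theorem \ref{thm:1per} only yields $l_m$-periodicity of $u^1-u^2$ in $X_m$, which is not a Liouville statement. The paper provides no resolution either: its proof of Theorem \ref{thm:ap1} consists of Lemma \ref{lem:AA} plus the remark that the rest is ``essentially the same as'' the proof of \thm{ap}(i), and the theorem statement carries the marginal annotation \texttt{check}, so the author evidently shared your doubt about case (a).
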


Owing to the next consideration, the proof of \thm{ap1} is
essentially the same as that of \thm{ap} part (i).

\begin{lemma}\label{lem:AA}
Let $\phi\in UC(\R^{N+1})$ be a.~p.~in the $m$-th variable. Then,
from any real sequence $\seq{s}$ can be extracted a subsequence
$(s_{n_k})_{k\in\N}$ such that, for all $(X_1\pp
X_{m-1},X_{m+1}\pp X_{N+1})\in\R^{N+1}$, the sequence $(\phi(X_1,$
$\cdots, X_m+s_{n_k}\pp X_{N+1}))_{k\in\N}$ converges uniformly in
$X_m\in\R$.
\end{lemma}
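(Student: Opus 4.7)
The plan is to combine a diagonal extraction over a countable dense set of ``other variables'' with the uniform continuity of $\phi$ in order to boost pointwise-in-$Y$ uniform convergence in $X_m$ to convergence that is valid for every $Y\in\R^N$ simultaneously. Throughout, write $Y:=(X_1\pp X_{m-1},X_{m+1}\pp X_{N+1})\in\R^N$ for the tuple of variables other than $X_m$, and regard $\phi$ as $\phi(X_m;Y)$.

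First, I fix a countable dense set $\{Y^{(j)}\}_{j\in\N}\subset\R^N$. By hypothesis, for each fixed $j$ the one-variable function $X_m\mapsto\phi(X_m;Y^{(j)})$ is a.~p.~in the sense of Bohr--Bochner, so from $\seq{s}$ one can extract a subsequence along which $\phi(X_m+s\,;Y^{(j)})$ converges uniformly in $X_m\in\R$. A standard diagonal extraction then produces a single subsequence $(s_{n_k})_{k\in\N}$ such that, for every $j\in\N$, the sequence $\bigl(\phi(X_m+s_{n_k};Y^{(j)})\bigr)_{k\in\N}$ is Cauchy uniformly in $X_m\in\R$.

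Second, I extend this Cauchy property to arbitrary $Y\in\R^N$ via uniform continuity. Fix $\varepsilon>0$. Since $\phi\in UC(\R^{N+1})$, there exists $\delta>0$ such that
\[
|\phi(X_m;Y)-\phi(X_m;Y')|<\tfrac{\varepsilon}{3}\qquad\text{whenever }|Y-Y'|<\delta,
\]
the key point being that $\delta$ is independent of $X_m$ (it comes from the modulus of continuity of $\phi$ on $\R^{N+1}$). Pick $j$ with $|Y-Y^{(j)}|<\delta$, choose $K$ so that $|\phi(X_m+s_{n_k};Y^{(j)})-\phi(X_m+s_{n_l};Y^{(j)})|<\varepsilon/3$ uniformly in $X_m$ for all $k,l\geq K$, and combine by the triangle inequality to obtain
\[
|\phi(X_m+s_{n_k};Y)-\phi(X_m+s_{n_l};Y)|<\varepsilon
\]
uniformly in $X_m\in\R$. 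Thus $(\phi(\,\cdot\,+s_{n_k};Y))_{k\in\N}$ is Cauchy in the uniform norm on $\R$, hence converges uniformly in $X_m$, which is the desired conclusion.

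The only subtle step is the second one, and it relies precisely on the hypothesis $\phi\in UC(\R^{N+1})$: without uniform continuity, $\delta$ could depend on $X_m$ and one could not transfer the Cauchy estimate from the dense set $\{Y^{(j)}\}$ to an arbitrary $Y$. Note that the statement does not claim a uniform modulus in $Y$; it only asks that, for each fixed $Y$, the translates converge uniformly in $X_m$, which is exactly what the argument above delivers.
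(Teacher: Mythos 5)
Your proof is correct and follows essentially the same route as the paper's: diagonal extraction over a countable dense set of the "other" variables to obtain a common subsequence with uniform-in-$X_m$ Cauchy behaviour on that dense set, then a $\varepsilon/3$ argument using $\phi\in UC(\R^{N+1})$ to propagate the Cauchy estimate to an arbitrary $Y$. The paper chooses $\Q^N$ as the dense set and specializes to $m=N+1$, but these are cosmetic differences.
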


\begin{proof}
The proof is similar to that of the Arzela-Ascoli theorem. For
simplicity, consider the case $m=N+1$. Let $\seq{s}$ be a sequence
in $\R$. As for any $q\in\Q^N$ there exists a subsequence
$(s^q_{n})_{n\in\N}$ of $\seq{s}$ such that
$(\phi(q,t+s^q_n))_{n\in\N}$ converges uniformly in $t\in\R$,
using a diagonal method we can find a common subsequence
$(s_{n_k})_{k\in\N}$ such that $(\phi(q,t+s_{n_k}))_{k\in\N}$
converges uniformly in $t\in\R$, for all $q\in\Q^N$. Fix
$x\in\R^N$. By the uniform continuity of $\phi$, for any $\e>0$
there exists $q\in\Q^N$ such that
$$\fa t\in\R,\quad|\phi(x,t)-\phi(q,t)|<\frac\e3.$$
Therefore,
$$|\phi(x,t+s_{n_k})-\phi(x,t+s_{n_h})|<\frac23\e+
|\phi(q,t+s_{n_k})-\phi(q,t+s_{n_h})|<\e$$ for $h,k$ big enough,
independent of $t\in\R$.
\end{proof}

\begin{remark}\rm
Statement (i) of \thm{ap} does not follow from \thm{ap1} because
being a.~p.~separately in each variable does not imply the almost
periodicity in the sense of Definition \ref{def:ap}. For example,
the function $\phi(x,y)=\sin(xy)$ is periodic in each variable but
it is not a.~p., because it is known that any a.~p.~function is
uniformly continuous (see e.~g.~\cite{amerio-prouse})
\end{remark}

%%%%%%%%%%%%%%%%%%%%%%%%%%%%%%%%%%%%%%%%%%%%%%%%%%%%%%%%%%%%%%%%%%
%%%%%%%%%%%%%%%%%%%%%%%%%%%%%%%%%%%%%%%%%%%%%%%%%%%%%%%%%%%%%%%%%%

\section{General periodic domains}\label{sec:Omega}

Henceforth, $\O$ denotes a {\bf uniformly smooth}
domain in $\R^N$. The symbol $\nu$ stands for the outer unit
normal vector field to $\O$.

We fix $l_1\pp l_{N+1}>0$. The domain $\O\subset\R^N$ is said to
be periodic in the direction $x_m$, $m\in\{1,\pp N\}$, if
$\O+\{l_me_m\}=\O$. If $\O$ is periodic in all directions, we
simply say that it is periodic. From now on, when we say that a
function or an operator is periodic (resp.~periodic in the $m$-th
variable with $m\in\{1\pp N+1\}$) we mean that its period is
$(l_1\pp l_{N+1})$ (resp.~$l_m$).

Besides the assumptions of Section \ref{sec:main}, we will
sometimes require in the sequel that the coefficients of $P$ and
the function $f$ are uniformly H\"older continuous
\footnote{ we denote by $C^{2n+\gamma,n+\frac\gamma2}$, with $n\in\{0,1\}$
and $\gamma\in[0,1)$, the space of functions 
whose space derivatives up to order
$2n$ and time derivative, if $n=1$, are {\bf locally} H\"older
continuous with exponent $\gamma$ with respect to $x$ and with exponent
$\gamma/2$ with respect to $t$.\\ 
If these derivatives are {\bf uniformly} H\"older
continuous then we say that the function belongs to $C^{2n+\gamma,n+\frac\gamma2}_b$.}.
This is
because, in some proofs, we need the solutions to be
Lipschitz continuous. In the elliptic case, this property follows from 
$W^{2,p}$ estimates, for $p>N$, and embedding theorem and indeed
the H\"older continuity assumption is not necessary.

%%%%%%%%%%%%%%%%%%%%%%%%%%%%%%%%%%%%%%%%%%%%%%%%%%%%%%%%%%%%%%%%%%

\subsection{Dirichlet boundary conditions}\label{sec:D}

\def\pD{\varphi_{p,D}}

We deal with the Dirichlet
problem \formulaI{PD}
\left\{\begin{array}{ll}
Pu=f(x,t), & x\in\O,\ t\in\R\\
u=g(x,t), & x\in\partial\O,\ t\in\R,
\end{array}\right.
\formulaF with $f$ measurable and $g\in C^0(\partial\O\times\R)$.
The boundary condition in \eq{PD} is understood in classical sense:
$u\in C^0(\ol\O\times\R)$ and $u=g$ on $\partial\O\times\R$.

If $L$ is a periodic elliptic operator (as defined in Section
\ref{sec:main}), then $\lambda_{p,D}(-L)$ and
$\pD$ denote respectively the periodic \pe\ and eigenfunction of $-L$ in $\O$, with
Dirichlet boundary conditions. That is, $\lambda_{p,D}(-L)$ is
the unique real number such that the problem
$$\left\{\begin{array}{ll}
-L\pD=\lambda_{p,D}(-L)\pD & \text{in }\O\\
\pD=0 & \text{on }\partial\O
\end{array}\right.$$
admits a solution $\pD$ (unique up to a multiplicative constant)
which is positive in $\O$ and periodic.

The next result is the analogue of \thm{1per}.

\begin{theorem}\label{thm:1perD}
Let $u$ be a bounded solution of \eq{PD}, with $P,\ f,\ g$
periodic in the $m$-th variable, as well as $\O$ if $m\neq N+1$,
and with $c\leq0$. Then, $u$ is periodic in the $m$-th variable.
\end{theorem}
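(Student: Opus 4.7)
The plan is to adapt the proof of \thm{1per} to the Dirichlet setting. Set $\psi(X):=u(X+l_m e_m)-u(X)$: the periodicity of $P$, $f$, $g$ (and of $\O$ when $m\leq N$) in the $m$-th variable yields $P\psi=0$ in $\O\times\R$ and $\psi\equiv0$ on $\partial\O\times\R$. I want to prove $\psi\equiv0$; by the symmetry argument concluding the proof of Lemma \ref{lem:1per} (repeat the whole reasoning with $-l_m$ in place of $l_m$) it suffices to show $\psi\leq0$. Suppose for contradiction that $k:=\sup_{\O\times\R}\psi>0$ and pick $X_n=(x_n,t_n)\in\O\times\R$ with $\psi(X_n)\to k$. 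Since $\psi$ is uniformly H\"older continuous up to the boundary (a consequence of boundary Schauder estimates valid in uniformly smooth domains with H\"older coefficients) and $\psi|_{\partial\O}=0$, the positivity of $k$ forces $d_n:=\dist(x_n,\partial\O)\geq\delta$ for some $\delta>0$.

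I then split on the behaviour of $d_n$. \emph{Case 1: $d_n\to\infty$ along a subsequence.} The translated domains $\O-x_n$ exhaust $\R^N$, so the proof of Lemma \ref{lem:1per} applied with the supersolution $v\equiv1$ (for which $Pv=-c\geq0$) goes through unchanged: extracting convergent subsequences of the coefficients via their uniform continuity produces a bounded $\psi_\infty$ on $\R^{N+1}$ solving a limit equation with $\sup\psi_\infty=k$ attained at the origin; the parabolic strong maximum principle forces $\psi_\infty\equiv k$ on $\R^N\times(-\infty,0]$, and the diagonal sequence $\zeta_h:=\lim_n u(-hl_m e_m+X_n)$ satisfies $\zeta_h-\zeta_{h+1}=k>0$, so $\zeta_h\to-\infty$, contradicting the boundedness of $u$.

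\emph{Case 2: $d_n$ stays bounded.} Choose $y_n\in\partial\O$ realising $d_n$; up to a subsequence, the uniform smoothness of $\O$ yields convergence (locally in the Hausdorff or $C^{2,\alpha}$ sense) of $\O-y_n$ to a uniformly smooth domain $\O_\infty$ with $0\in\partial\O_\infty$, and $x_n-y_n\to z\in\O_\infty$ with $\delta\leq|z|\leq M$. Interior and boundary parabolic estimates let $\psi(\cdot+(y_n,t_n))$ converge locally uniformly on $\overline{\O_\infty}\times\R$ to a bounded $\psi_\infty$ solving a limit equation $\t P\psi_\infty=0$ in $\O_\infty\times\R$, with $\psi_\infty\equiv0$ on $\partial\O_\infty\times\R$ and $\psi_\infty(z,0)=k$. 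The parabolic strong maximum principle then propagates $\psi_\infty\equiv k$ throughout the connected component of $\O_\infty\times(-\infty,0]$ containing $(z,0)$; by continuity at $0\in\partial\O_\infty$ this contradicts the zero Dirichlet datum, so Case 2 cannot occur either.

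The main obstacle is Case 2: one must ensure the translated domains $\O-y_n$ converge to a genuine uniformly smooth domain $\O_\infty$ through the origin, and that $\psi_\infty$ is truly continuous up to $\partial\O_\infty$ with the prescribed boundary value. This is precisely why the statement requires $\O$ to be uniformly smooth and appeals to the H\"older regularity hypothesis on the coefficients mentioned at the beginning of Section \ref{sec:Omega}. Case 1, by contrast, is essentially a verbatim reprise of Lemma \ref{lem:1per}.
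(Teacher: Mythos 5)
Your proof is correct, and its overall structure mirrors the paper's, but you handle the delicate boundary-approach scenario by a genuinely different argument. The paper never makes your case split: after observing that $\psi\in UC(\ol\O\times\R)$ (which, exactly as you note, keeps $\dist(x_n,\partial\O)$ bounded away from zero), it runs a bootstrapping argument showing that the set
$$\mc{R}:=\{r>0:\ B_r+\{x_n\}\subset\O\text{ for }n\text{ large}\}$$
is closed under addition of a \emph{fixed} $\delta>0$: if $r\in\mc R$, the translates $\psi_n$ converge on $B_r\times(-r,r)$ to a limit that equals $k$ on $B_r\times(-r,0]$ by the \SMP, so $\psi_n\geq k/2$ there for $n$ large, and the uniform modulus of continuity of $\psi$ together with $\psi|_{\partial\O}=0$ then forces $B_r+\{x_n\}$ to lie at distance $\geq\delta$ from $\partial\O$, giving $r+\delta\in\mc R$; hence $\mc R=\R^+$. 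In other words, the paper rules out your Case~2 outright and reduces to your Case~1 (which you correctly identify as a verbatim reprise of Lemma~\ref{lem:1per}). You instead dispatch Case~2 directly by translating along boundary projections $y_n$, passing to a limit domain $\O_\infty$ with $0\in\partial\O_\infty$, and contradicting the inherited zero Dirichlet datum via the \SMP\ and continuity up to $\partial\O_\infty$. Both routes work; the paper's entirely avoids the domain-convergence machinery, while yours is more geometric and shows explicitly why the supremum cannot accumulate near $\partial\O$. One inaccuracy: you attribute the required boundary regularity of $\psi$ to the H\"older-coefficient hypothesis announced at the start of Section~\ref{sec:Omega}, but that extra assumption is only imposed later in \thm{perD}; for \thm{1perD} the paper works under the standing $L^\infty$/uniform-continuity assumptions and obtains $\psi\in UC(\ol\O\times\R)$ from $W^{2,1}_p$ boundary estimates and Morrey embedding, which is in fact all your Case~2 needs as well.
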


\begin{proof}
We use the same method as in the proof of Lemma \ref{lem:1per},
with $v\equiv1$. As before, it is sufficient to show that the
function
$$\psi(X):=u(X+l_me_m)-u(X)$$ is nonpositive. Assume by
contradiction that $k:=\sup_{\O\times\R}\psi>0$. Let
$\seq{X}=\seq{(x_n,t_n)}$ in $\O\times\R$ be such that
$\psi(X_n)\to k$. We consider the translated
$\psi_n(X):=\psi(X+X_n)$. The problem is that, in principle, these
functions are well defined only at $\{(0\pp0)\}\times\R$. As
$\psi$ is a solution of \eq{PD} with $f\equiv g\equiv0$ and $O$ is
uniformly smooth, parabolic estimates up to the boundary together
with embedding theorem yield $\psi\in UC(\O)$. Hence, there exists
$r>0$ such that $\psi_n>0$ in $B_r\times(-r,r)$ for $n$ large
enough. In particular, the set
$$\mc{R}:=\{r>0\ :\ B_r+\{x_n\}\subset\O\text{ for $n$
large enough}\}$$ is not empty. We claim that $\mc{R}=\R^+$. Let
$r\in\mc{R}$. We know that, for $n$ large enough, the $\psi_n$ are
well defined and uniformly bounded in $B_r\times\R$. Moreover,
again by the estimates up to the boundary, for any $p>1$,
$$\|\partial_t\psi_n\|_{L^p(B_r\times(-r,r))},
\|\partial_i\psi_n\|_{L^p(B_r\times(-r,r))},
\|\partial_{ij}\psi_n\|_{L^p(B_r\times(-r,r))}\leq C,$$ where
$C>0$ is independent of $n$. Therefore, by the compact injection
of $L^p$ in $C^0$, we infer that the $\psi_n$ converge (up to
subsequences) to a bounded solution $\psi_\infty$ of
$$\partial_t \psi_\infty-\t
a_{ij}(x,t)\partial_{ij}\psi_\infty-\t b_i(x,t)\partial_i
\psi_\infty-\t c(x,t)w_\infty=0,\quad  x\in B_r,\ t\in(-r,r),$$
where $\t a_{ij}=\limn a_{ij}(\.+X_n),\ \t b_i=\limn b_i(\.+X_n)$
uniformly in $B_r\times(-r,r)$ and $\t c=\limn c(\.+X_n)$ weakly
in $L^p(B_r\times(-r,r))$. We know that $\psi_\infty$ attains its
maximum value $k$ at $0$ and then the \SMP\ yields
$\psi_\infty(x,t)=k$ for $x\in B_r,\ t\in(-r,0]$ (note that $\t
c\leq0$). As a consequence, for $n$ large, $\psi_n\geq k/2$ in
$B_r\times(-r,0]$ and then, by the uniform continuity, there
exists $\delta>0$ independent of $r$ and $n$ such that $\psi_n>0$
in $B_{r+\delta}\times(-r,0]$. This shows that $\mc{R}=\R^+$. We
then get a contradiction exactly as in the proof of Lemma
\ref{lem:1per}.
\end{proof}

From \thm{1perD} it follows immediatelythe following uniqueness
result (which implies in particular the LP).

\begin{corollary}\label{cor:liouvilleD}
If $\O$ and $P$ are periodic and $c\leq0$ then problem \eq{PD}
admits at most a unique bounded solution.
\end{corollary}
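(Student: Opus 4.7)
The plan is to reduce the uniqueness question to the homogeneous problem and then combine Theorem \ref{thm:1perD} with the strong maximum principle.

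Let $u_1$ and $u_2$ be two bounded solutions of \eq{PD}. By linearity, $w := u_1 - u_2$ is a bounded solution of
\[
Pw = 0 \ \text{in}\ \Omega\times\R, \qquad w = 0\ \text{on}\ \partial\Omega\times\R.
\]
The right-hand sides $f \equiv 0$ and $g \equiv 0$ are trivially periodic in every variable, and by hypothesis $\Omega$ is periodic in every spatial direction and $P$ is periodic in all $N+1$ variables. Hence for each $m \in \{1,\dots,N+1\}$ we may apply Theorem \ref{thm:1perD} to $w$ (the assumption on $\Omega$ is vacuous when $m = N+1$), obtaining that $w$ is periodic in the $m$-th variable with period $l_m$. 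Iterating over $m$ shows that $w$ is periodic with period $(l_1,\dots,l_{N+1})$.

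Next I would exploit this periodicity to reach a contradiction unless $w \equiv 0$. Since $w$ is periodic and continuous on $\overline\Omega \times \R$, it attains its supremum $M$ and infimum over $\overline\Omega \times \R$. Up to replacing $w$ with $-w$, assume $M \geq 0$. Because $w \equiv 0$ on $\partial\Omega\times\R$, either $M = 0$ or $M$ is attained at some interior point $(x_0,t_0) \in \Omega \times \R$. In the latter case, the hypothesis $c \leq 0$ together with $M \geq 0$ allows the parabolic strong maximum principle to apply to $w$: it forces $w \equiv M$ on $\Omega \times (-\infty,t_0]$, and by the time-periodicity of $w$ this extends to $w \equiv M$ on all of $\Omega \times \R$. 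Continuity up to the boundary then yields $M = 0$, contradicting $M > 0$. Hence $\sup w \leq 0$; the same argument applied to $-w$ gives $\inf w \geq 0$, so $w \equiv 0$ and $u_1 \equiv u_2$.

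There is essentially no obstacle here beyond being careful to verify that the hypotheses of Theorem \ref{thm:1perD} are in place for each coordinate direction, and that the strong maximum principle is invoked at a nonnegative maximum (which is why the assumption $c \leq 0$ is crucial). Since $w$ is already known to satisfy the standard regularity conditions implicit in the definition of solution used throughout the paper, no further estimates up to the boundary are required beyond those already used to establish Theorem \ref{thm:1perD}.
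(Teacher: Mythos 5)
Your proof is correct and follows essentially the same route as the paper's: reduce to the difference $w = u_1 - u_2$ solving the homogeneous problem, invoke Theorem \ref{thm:1perD} to get periodicity of $w$ and hence attainment of global extrema, then apply the strong maximum principle at a nonnegative extremum and use the vanishing boundary data to conclude $w \equiv 0$. The paper states this more tersely but the underlying argument is identical.
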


\begin{proof}
Suppose that $u_1,\ u_2$ solve \eq{PD}. Applying \thm{1perD} we
infer that $v:=u_1-u_2$ is periodic and then it has a global
maximum and minimum in $\ol\O$. Since either $\max v\geq0$ or
$\min v\leq0$, the \SMP\ implies that $v$ is constant. But it
vanishes on $\partial\Omega$ and then $v\equiv0$.
\end{proof}

In order to prove the LP when $P=\partial_t-L$ and
$\lambda_D(-L)\geq0$, we will make use of the following
consideration.

\begin{lemma}\label{lem:k}
Let $v_1\in W^{1,\infty}(\O\times\R)$ and $v_2\in C^1(\ol\O\times\R)$
be such that
$$v_1\leq v_2\text{ on }\partial\O\times\R,\qquad
\nabla v_2\in UC(\ol\O\times\R),$$
\formulaI{de<0}
\sup_{\partial\O\times\R}(v_1-v_2+\min(\partial_\nu v_2,0))<0,
\formulaF
\formulaI{Oe}
\fa\e>0,\quad\inf\{v_2(x,t)\ :\
\dist(x,\O^c)>\e,\ t\in\R\}>0.
\formulaF
Then, there exists a positive
constant $k$ such that $kv_2\geq v_1$ in $\O\times\R$.
\end{lemma}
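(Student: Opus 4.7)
The plan is to produce $k>0$ with $kv_2\geq v_1$ by showing that $\sup_{\O\times\R}(v_1/v_2)<+\infty$. The argument combines \eq{Oe} for the interior with a careful asymptotic analysis near $\partial\O$, in which the uniform continuity of $\nabla v_2$, the positivity $v_2\geq 0$ (deduced from \eq{Oe} and continuity), and hypothesis \eq{de<0} each play a distinct role.

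First, $v_2\geq 0$ on $\ol\O\times\R$: every $(y,t)\in\partial\O\times\R$ is approached from inside $\O$ by points at which $v_2>0$ via \eq{Oe}, so continuity gives $v_2(y,t)\geq 0$. Let $\delta>0$ be such that the sup in \eq{de<0} is $\leq-\delta$, set $L:=\|v_1\|_{W^{1,\infty}(\O\times\R)}$, let $r^*>0$ be a uniform tubular-neighborhood radius of $\O$, and let $\omega$ denote the modulus of uniform continuity of $\nabla v_2$ on $\ol\O\times\R$. For $x\in\O$ with $d(x):=\dist(x,\partial\O)<r^*$, write $y=\pi(x)\in\partial\O$, $\beta=v_2(y,t)$, $\alpha=\partial_\nu v_2(y,t)$. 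Two key estimates will drive the proof: the Taylor expansion
$$v_2(x,t)=\beta-d(x)\,\alpha+O\bigl(\omega(d(x))\,d(x)\bigr),$$
and, by path-integrating $\partial_\nu v_2$ from $y$ to $y-r^*\nu(y)$ and invoking $v_2\geq 0$ at the endpoint, the bound
$$\alpha\;\leq\;\beta/r^*+C,\qquad C:=\frac{1}{r^*}\int_0^{r^*}\omega(s)\,ds.$$

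Suppose for contradiction that $(x_n,t_n)\in\O\times\R$ satisfies $v_1(x_n,t_n)/v_2(x_n,t_n)\to+\infty$. Boundedness of $v_1$ forces $v_2(x_n,t_n)\to 0$, which by \eq{Oe} gives $d_n:=d(x_n)\to 0$; let $y_n=\pi(x_n)$ and $\beta_n,\alpha_n$ accordingly. In \emph{Case A} ($\liminf v_1(x_n,t_n)>0$), the Lipschitz bound on $v_1$ and $v_1\leq v_2$ on $\partial\O$ give $\beta_n\geq\epsilon/2$ for some $\epsilon>0$; the Taylor expansion yields $\alpha_n=(\beta_n/d_n)(1+o(1))\to+\infty$, and inserting this into the UC bound $\alpha_n\leq\beta_n/r^*+C$ and dividing by $\beta_n$ forces $(1+o(1))/d_n\leq 1/r^*+o(1)$, i.e.\ $d_n\geq r^*(1-o(1))$—contradicting $d_n\to 0$. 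In \emph{Case B} ($v_1(x_n,t_n)\to 0$), Lipschitz continuity and $v_1\leq v_2$ give $\beta_n\to 0$, whence $d_n\alpha_n\to 0$ by the expansion, while \eq{de<0} yields $\min(\alpha_n,0)\leq v_2(y_n,t_n)-v_1(y_n,t_n)-\delta\to -\delta$, so $\alpha_n\leq-\delta/2$ eventually. Plugging back in, $v_2(x_n,t_n)\geq\beta_n+d_n\delta/4$ while $v_1(x_n,t_n)\leq\beta_n+Ld_n$, bounding the ratio by $\max(2,4L/\delta)$—also a contradiction. Hence $\sup(v_1/v_2)<+\infty$, and any $k$ beyond this sup satisfies $kv_2\geq v_1$.

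The main obstacle is Case A: one must translate the uniform continuity of $\nabla v_2$ into the quantitative bound $\alpha\leq\beta/r^*+C$ through path-integration along the fixed tubular distance $r^*$, and balance it against the Taylor expansion—the uniformity of $r^*$ afforded by the uniformly smooth $\O$ is essential, since a tubular radius shrinking along the sequence would void the contradiction. Hypothesis \eq{de<0} is needed only in Case B, where it rules out the degenerate boundary configuration $v_1=v_2=\partial_\nu v_2=0$ that would otherwise allow $v_1/v_2$ to blow up with $v_2=O(\omega(d)d)$ against $v_1=O(d)$.
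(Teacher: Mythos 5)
Your argument reaches the correct conclusion by a genuinely different and somewhat more elaborate route than the paper's, but it contains one gap in Case~B.

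The paper's proof avoids both the case split and the path-integration estimate $\alpha\leq\beta/r^*+C$. Starting from the same contradictory sequence with $nv_2(x_n,t_n)<v_1(x_n,t_n)$ (hence $v_2(x_n,t_n)\to0$, $d_n\to0$), it observes that
\begin{equation*}
0\leq\limn\bigl(v_2(y_n,t_n)-v_1(y_n,t_n)\bigr)=\limn\bigl(v_2(x_n,t_n)-v_1(x_n,t_n)\bigr)\leq\limn(1-n)\,v_2(x_n,t_n)\leq0,
\end{equation*}
so that $v_2(y_n,t_n)-v_1(y_n,t_n)\to0$ along the \emph{entire} extremizing sequence, without needing to know whether $v_1(x_n,t_n)$ vanishes. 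Hypothesis~\eq{de<0} then gives $\limsup_n\partial_\nu v_2(y_n,t_n)<0$, and the uniform continuity of $\nabla v_2$ turns the difference quotient $(v_2(x_n,t_n)-v_2(y_n,t_n))/|x_n-y_n|$ into $-\partial_\nu v_2(y_n,t_n)+o(1)$, which has positive liminf; multiplied by $n$ it diverges, contradicting the Lipschitz bound on $v_1$. In effect the sandwich above shows that \eq{de<0} is always usable, which is why the paper never needs your Case~A mechanism or the bound $\alpha\leq\beta/r^*+C$.

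The gap: in Case~B you assert that ``Lipschitz continuity and $v_1\leq v_2$ give $\beta_n\to0$.'' Those two facts only give a \emph{lower} bound $\beta_n=v_2(y_n,t_n)\geq v_1(y_n,t_n)\to0$; they say nothing about an upper bound on $\beta_n$, and the subsequent application of \eq{de<0} requires $v_1(y_n,t_n)-v_2(y_n,t_n)\to0$, i.e.\ precisely the missing upper bound. This is repairable with tools you already have — combine the Taylor expansion with the path-integration estimate to get $\beta_n(1-d_n/r^*)\leq v_2(x_n,t_n)+Cd_n+O(\omega(d_n)d_n)\to0$ — or simply replace it by the paper's three-line sandwich, which handles both of your cases uniformly and renders the whole case distinction (and the estimate $\alpha\leq\beta/r^*+C$) unnecessary.
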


\begin{proof}
Assume by way of contradiction that there exist two sequences
$\seq{x}$ in $\O$ and $\seq{t}$ in $\R$ such that
$nv_2(x_n,t_n)<v_1(x_n,t_n)$. Hence, $\limn v_2(x_n,t_n)=0$ and then
$\dist(x_n,\partial\O)\to0$ by \eq{Oe}. For $n\in\N$, let
$y_n$ denote a projection of $x_n$ on $\partial\O$. We find that
\[\begin{split}
0 &\leq\limn (v_2(y_n,t_n)-v_1(y_n,t_n))=\limn (v_2(x_n,t_n)-v_1(x_n,t_n))\\
&\leq\limn (v_2(x_n,t_n)-nv_2(x_n,t_n))\leq0.\end{split}\]
Therefore, by \eq{de<0},
$$\limsup_{n\to\infty}\partial_\nu v_2(y_n,t_n)<0.$$
As $\nabla
v_2\in UC(\ol\O\times\R)$, we then infer that
\[\begin{split}
\limn\frac{v_1(x_n,t_n)-v_1(y_n,t_n)}{|x_n-y_n|} &\geq \limn
\frac{nv_2(x_n,t_n)-v_2(y_n,t_n)}{|x_n-y_n|}\\
&\geq \limn n\frac{v_2(x_n,t_n)-v_2(y_n,t_n)}{|x_n-y_n|}\\
&= +\infty,
\end{split}\]
which is a contradiction.
\end{proof}

%\begin{lemma}\label{lem:k}
%Let $v_1\in W^{1,\infty}(\O\times\R)$ and let $v_2\in C^1(\ol\O\times\R)$
%be a periodic function such that
%$$v_2>0\text{ in }\O\times\R,\qquad
%v_1\leq v_2\text{ on }\partial\O\times\R,$$
%$$\fa x\in\partial\O,\ t\in\R,\quad v_2(x,t)=0
%\solose \partial_\nu v_2(x,t)<0.$$
%Then, there exists a positive
%constant $k$ such that $kv_2\geq v_1$ in $\O\times\R$.
%\end{lemma}
%
%\begin{proof}
%Assume by way of contradiction that there exists two sequences
%$\seq{x}$ in $\O$ and $\seq{t}$ in $\R$ such that
%$nv_2(x_n,t_n)<v_1(x_n,t_n)$. Hence, $\limn v_2(x_n,t_n)=0$ and then
%$\dist(x_n,\partial\O)\to0$. For $n\in\N$, let
%$y_n$ denote a projection of $x_n$ on $\partial\O$ and let
%$\xi_n\in\partial\O\cap[0,l_1)\times\cdots\times[0,l_N),\
%\tau_n\in[0,l_{N+1})$ be such that $z_n:=y_n-\xi_n\in\Z
%l_1\times\cdots\times\Z l_N$ and $t_n-\tau_n\in\Z l_{N+1}$.
%By computation,
%\[\begin{split}
%0 &=\limn\frac{v_1(y_n,t_n)-v_1(x_n,t_n)}{n|y_n-x_n|}\\
%&\leq\liminf_{n\to\infty}
%\frac{v_2(y_n,t_n)-nv_2(y_n-|y_n-x_n|\nu(y_n),t_n)}{n|y_n-x_n|}\\
%&\leq \liminf_{n\to\infty}
%\frac{v_2(\xi_n,\tau_n)-v_2(\xi_n-|y_n-x_n|\nu(\xi_n),\tau_n)}
%{|y_n-x_n|}.\\
%\end{split}\]
%Let $\xi$ and $\tau$ be the limit respectively of (a subsequence of)
%$\seq{\xi}$ and $\seq{\tau}$. The previous inequalities yield
%$\partial_\nu v_2(\xi,\tau)\geq0$. This is a contradiction, because
%$$v_2(\xi,\tau)=\limn v_2(\xi_n,\tau_n)=\limn v_2(x_n,t_n)=0,$$
%and then $\partial_\nu v_2(\xi,\tau)<0$.
%\end{proof}

We need the following uniform H\"older continuity assumptions:
\formulaI{Holder} a_{ij},b_i,c\in C^\gamma_b(\O), \formulaF
\formulaI{fg} f\in C^{\gamma,\frac\gamma2}_b(\O\times\R),\qquad g\in
C^{2+\gamma,1+\frac\gamma2}_b(\partial\O\times\R), \formulaF for some
$0<\gamma<1$.

\begin{theorem}\label{thm:perD}
Let $P=\partial_t-L$ with coefficients satisfying \eq{Holder} and
let $\O,\ L,\ f,\ g$ be periodic. If $u$ is a bounded solution of
\eq{PD} we have that:
\begin{itemize}

\item[{\rm(i)}] if $\lambda_{p,D}(-L)\geq0$ then $u$ is periodic, with
  the same period as $f,\ g$;

\item[{\rm(ii)}] if $\lambda_{p,D}(-L)=0$ and $f,\ g$ satisfy \eq{fg}
and either $f,g\leq0$ or
$f,g\geq0$ then $u\equiv k\varphi_{p,D}$, for some $k\in\R$, and
$f,g\equiv0$;

\item[{\rm(iii)}] if $\lambda_{p,D}(-L)>0$ and $f,g\equiv0$ then
$u\equiv0$.

\end{itemize}
\end{theorem}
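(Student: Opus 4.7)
The plan is to reproduce the strategy of the proof of \thm{per}, replacing $\varphi_p$ by $\varphi_{p,D}$ throughout. The essential new difficulty is that $\varphi_{p,D}$ vanishes on $\partial\O$, so it cannot directly play the role of the supersolution with positive infimum required by Lemma \ref{lem:1per}. This is overcome by using Lemma \ref{lem:k} together with the strict sign $\partial_\nu\varphi_{p,D}<0$ afforded by Hopf's lemma and periodicity. I would treat the three parts in the order (iii), (ii), (i), deducing (i) for $\lambda_{p,D}>0$ from (iii) and for $\lambda_{p,D}=0$ from (ii).

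For (iii), I first apply Lemma \ref{lem:k} with $v_1=\pm u$ and $v_2=\varphi_{p,D}$ to obtain $|u|\leq K\varphi_{p,D}$ for some $K>0$: the hypotheses hold because $u=0=\varphi_{p,D}$ on $\partial\O$ and, by Schauder estimates plus periodicity, $\partial_\nu\varphi_{p,D}\leq-\alpha<0$ uniformly on $\partial\O$ and $\inf_{\{\dist(x,\partial\O)\geq\e\}}\varphi_{p,D}>0$ for every $\e>0$. Setting $k^*:=\sup_{\O\times\R}u/\varphi_{p,D}$ and $w:=k^*\varphi_{p,D}-u\geq 0$, I argue by contradiction assuming $k^*>0$, so that $Pw=k^*\lambda_{p,D}\varphi_{p,D}>0$ in $\O\times\R$. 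Pick $(x_n,t_n)$ with $u(x_n,t_n)/\varphi_{p,D}(x_n)\to k^*$, and use the periodicity of $\O$ and of the data to translate by elements of $\prod l_i\Z$ so that the projections of the $x_n$ on $\partial\O$ lie in a compact set. Extracting a locally uniform subsequential limit $\tilde w_\infty$ of the translates (using the H\"older assumption \eq{Holder} to obtain Schauder estimates up to the boundary), I split into two cases: either $\dist(x_n,\partial\O)$ stays bounded away from $0$, in which case $\tilde w_\infty$ attains $0$ at an interior point, contradicting the strong maximum principle together with $P\tilde w_\infty>0$; or $\dist(x_n,\partial\O)\to 0$, in which case $\tilde w_\infty\geq 0$, $\tilde w_\infty=0$ on $\partial\O$, and $P\tilde w_\infty>0$ in the interior, so the strong maximum principle gives $\tilde w_\infty>0$ in $\O$ and Hopf yields $\partial_\nu\tilde w_\infty<0$ strictly at the limit boundary point $y_*$, forcing $\tilde w_\infty(x)/\varphi_{p,D}(x)\to\partial_\nu\tilde w_\infty(y_*)/\partial_\nu\varphi_{p,D}(y_*)>0$ as $x\to y_*$ from the interior, which contradicts $w(x_n,t_n)/\varphi_{p,D}(x_n)\to 0$. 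Hence $k^*\leq 0$, and symmetrically $\inf u/\varphi_{p,D}\geq 0$, so $u\equiv 0$.

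For (ii), assume WLOG $f,g\leq 0$, and set $k^*:=\sup u/\varphi_{p,D}$ (finite by Lemma \ref{lem:k}, whose hypotheses hold because $g\leq 0=\varphi_{p,D}$ on $\partial\O$). The function $w:=k^*\varphi_{p,D}-u$ is nonnegative in $\O$ and equals $-g\geq 0$ on $\partial\O$, with $Pw=-f\geq 0$. The same translation argument as in (iii) applies: either $\tilde w_\infty$ reaches $0$ at an interior point, the strong maximum principle gives $\tilde w_\infty\equiv 0$ on a backward cone, which by $-f\geq 0$ forces $f\equiv 0$ on that cone and, by periodicity of $f$ and forward-uniqueness of bounded parabolic solutions, yields $w\equiv 0$ on $\O\times\R$, so that $u=k^*\varphi_{p,D}$ and $f\equiv g\equiv 0$; or the sequence approaches $\partial\O$ and the Hopf argument of (iii) produces a contradiction. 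For (i), when $\lambda_{p,D}>0$, the function $\psi(X):=u(X+l_me_m)-u(X)$ is a bounded solution of the homogeneous Dirichlet problem (periodicity of $g$ makes it vanish on $\partial\O\times\R$), so (iii) gives $\psi\equiv 0$. When $\lambda_{p,D}=0$, (ii) applied to $\psi$ (whose data $0$ satisfy both sign conditions) yields $\psi=k\varphi_{p,D}$ for some $k\in\R$; iterating $u(X+nl_me_m)-u(X)=nk\varphi_{p,D}(x)$ and invoking the boundedness of $u$ forces $k=0$.

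The main obstacle is the translation-and-Hopf argument in (iii), and in particular the case when $(x_n,t_n)$ approaches $\partial\O$: closing it requires the H\"older regularity \eq{Holder} in order to pass to $C^{1,\alpha}$ boundary limits of the translates $\tilde w_\infty$ where a genuine normal derivative and Hopf's lemma can be applied, and a uniform strict upper bound $\partial_\nu\varphi_{p,D}\leq-\alpha<0$ (from periodicity) that survives in the limiting problem.
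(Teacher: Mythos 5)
Your proposal inverts the paper's logical order and this is where the trouble lies. The paper proves (i) first (for all $\lambda_{p,D}\geq0$), so that in parts (ii) and (iii) the auxiliary function $w=k^*\varphi_{p,D}-u$ is already known to be \emph{periodic}; its infimum is therefore attained somewhere in a fundamental domain, which is what makes the strong maximum principle / Hopf dichotomy close cleanly. You instead prove (iii) and (ii) directly for $u$ by a translation-to-the-limit argument, and then derive (i). Part (iii) of your argument is sound: there the case $\tilde w_\infty(Y_*)=0$ at an interior point gives an immediate contradiction because $P\tilde w_\infty=k^*\lambda_{p,D}\varphi_{p,D}>0$ is incompatible with $\tilde w_\infty\equiv0$ on a backward set. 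The derivation of (i) from (ii) and (iii), including the iteration $u(X+nl_me_m)-u(X)=nk\varphi_{p,D}(x)$, is also fine.

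The gap is in the interior case of (ii). There $\lambda_{p,D}=0$, so $P\tilde w_\infty=-f\geq0$ only; when $\tilde w_\infty$ vanishes at an interior point the strong maximum principle, then forward uniqueness, give $\tilde w_\infty\equiv0$ and, by periodicity, $f\equiv g\equiv0$ — but this is a statement about the subsequential limit $\tilde w_\infty$ of the translates $w(\cdot+Z_n)$, \emph{not} about $w$ itself. Since $u$ is not assumed periodic, $w(\cdot+Z_n)$ need not equal $w$, and $\tilde w_\infty\equiv0$ only tells you that $w$ becomes small near the points $Z_n$; it does not force $w$ to vanish anywhere, hence "forward-uniqueness of bounded parabolic solutions" has nothing to grip. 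With $\lambda_{p,D}=0$ you cannot invoke (iii) to finish either, and invoking (i) would be circular in your ordering. The paper sidesteps exactly this by proving (i) first (using the $\zeta_h$ monotonicity argument from the proof of \thm{1perD} to handle its Case~2, where $w$ does not satisfy \eq{Oe}), so that in (ii) the function $w$ is periodic and its infimum is attained. To repair your proof you would either have to prove (i) before (ii) — i.e., essentially follow the paper — or adapt the $\zeta_h$-type unboundedness argument to the non-$\psi$ setup of your part (ii); as written, neither step is present and the interior case of (ii) is not closed.
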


\begin{proof} (i) Fix $m\in\{1\pp N+1\}$ and set
$$\psi(X):=u(X+l_m e_m)-u(X).$$
Let us check that the functions $v_1=\psi$ and $v_2=\pD$ fulfill
the hypotheses of Lemma \ref{lem:k}. Parabolic and elliptic estimates
up to the boundary yield $v_1,v_2\in
C^{2+\gamma,1+\frac\gamma2}_b(\O\times\R)$. Moreover,
$$v_1=v_2=0,\quad\partial_\nu v_2<0,\quad\text{on
}\partial\O\times\R,$$
the last inequality following from the Hopf lemma. Therefore, the
hypotheses of Lemma
\ref{lem:k} are satisfied owing to the periodicity of $\pD$. As a
consequence, there exists $k>0$ such that $k\pD\geq\psi$. Define
$$k^*:=\inf\{k>0\ :\ k\pD\geq\psi\}.$$
Assume by contradiction that $k^*>0$. The function
$w(x,t):=k^*\pD(x)-\psi(x,t)$ is nonnegative by the definition of $k^*$. We
distinguish two different cases.

Case 1:  $w$ satisfies \eq{Oe}.

\noindent If $\sup_{\partial\O\times\R}\partial_\nu w\geq0$ then there exist a
sequence $\seq{Z}$ in $\Z l_1\times\cdots\times\Z l_{N+1}$ and a
sequence $\seq{Y}$ in $\partial\O\times\R$ converging to some
$(y,\tau)\in\partial\O\times\R$ such that
$$\limsup_{n\to\infty}\partial_\nu w(Y_n+Z_n)\geq0.$$
The sequence $w(\.+Z_n)$ converges (up to subsequences) in
$C^{2+\t\gamma,1+\frac{\t\gamma}2}_b(K\times(-r,r))$,
for any $0<\t\gamma<\gamma$, compact set
$K\subset\ol\O$ and $r>0$, to a nonnegative function $w_\infty$ satisfying
$$Pw_\infty\geq0\text{ in }\O\times\R,\qquad w_\infty=0\text{ on
}\partial\O\times\R,\qquad
\partial_\nu w_\infty(y,\tau)\geq0.$$ By Hopf's lemma it follows that
$w_\infty=0$ in $\O\times(-\infty,\tau]$, which is impossible because
$w$ satisfies \eq{Oe}.
This shows that $\sup_{\partial\O\times\R}\partial_\nu w<0$ and then
\eq{de<0} holds with $v_1=\psi$ and $v_2=w$. Therefore, by
Lemma \ref{lem:k} we can find another positive constant $k'$ such that
$k'w\geq\psi$ in $\O\times\R$. That is,
$$\frac {k'}{k'+1}k^*\pD\geq\psi,$$
which contradicts the definition of $k^*$. This case is ruled out.

Case 2:  $w$ does not satisfies \eq{Oe}.

\noindent There exist then a sequence $\seq{Z}$ in $\Z
l_1\times\cdots\times\Z l_{N+1}$ and a sequence $\seq{Y}$ in $\O\times\R$
converging to some $(y,\tau)\in\O\times\R$ such that
$$\limn w(Y_n+Z_n)=0.$$
With usual arguments, we find that (a subsequence of) the sequence
$w(\.+Z_n)$ converges to $0$ locally uniformly in
$\O\times(0,\tau]$. Defining the bounded sequence $(\zeta_h)_{h\in\N}$
as at the end of the proof of \thm{1perD},
we get the following contradiction:
$$\fa h\in\N,\quad\zeta_h-\zeta_{h+1}=k^*\pD(y).$$

In both cases 1 and 2, we have shown that $k^*=0$, that is, $u(X+l_m
e_m)\leq u(X)$. The
converse inequality is obtained in analogous way by replacing $l_m$
with $-l_m$.

(ii) Up to replace $u$ with $-u$, it is not restrictive to assume
that $f,g\leq0$. Hence, $u(x,t)\leq\pD(x)$ for $x\in\partial\O,\
t\in\R$. Note that by parabolic estimates up to the boundary, $u\in
C^{2+\gamma,1+\frac\gamma2}_b(\O\times\R)$.
Applying Lemma \ref{lem:k} with $v_1=u$ and $v_2=\pD$, we
find a positive constant $k$ such that $k\pD\geq u$. Set
$$k^*:=\inf\{k\in\R\ :\ k\pD\geq u\}.$$
The function $w:=k^*\pD-u$ is nonnegative, periodic, by (i), and
satisfies
$$Pw=-f\geq0\quad\text{in }\O\times\R.$$
If $w$ vanishes somewhere in $\O\times\R$ then the parabolic \SMP\ and
the time-periodicity of $w$ yields $w\equiv0$, which concludes the
proof of the statement. Otherwise, for any $x\in\partial\O,\ t\in\R$
such that $w(x,t)=0$, the Hopf lemma yields $\partial_\nu w(x,t)<0$.
Consequently,
$$\fa x\in\partial\O,\ t\in\R,\quad -w(x,t)+\min(\partial_\nu
w(x,t),0)<0.$$
As $\pD$ and $w$ are periodic, we see that the hypotheses of
Lemma \ref{lem:k} are satisfied by $v_1=\pD$ and $v_2=w$ and then
we there exists another positive constant $h$ such that $h
w\geq\pD$. Hence, $(k^*-h^{-1})\pD\geq u$ which contradicts
the definition of $k^*$.

(iii) It is not restrictive to assume that $\sup u\geq0$ (if
not, replace $u$ with $-u$). We proceed exactly as in the proof of
(ii). Now, the constant $k^*$ is nonnegative and then the function
$w:=k^*\pD-u$ satisfies
$$Pw=k^*\lambda_{p,D}(-L)\pD\geq0.$$
Then, as before, we derive $w\equiv0$. From the above expression we
see that $k^*=0$ and then $u\equiv0$.
\end{proof}

\begin{remark}\label{rem:nonHolder}{\rm
\thm{perD} part (i) when $\lambda_{p,D}(-L)>0$ and
part (iii) hold without the additional assumption \eq{Holder}.
In fact, the latter is only used to have the Lipschitz continuity
of solutions required to apply Lemma \ref{lem:k}. But this can be avoided 
by approximating $\O$ by a sequence of domains $(\mc{O}_m)_{m\in\N}$ 
which contain $\O$.
Then, one argues as before, with $\vp_{p,D}$ replaced by the periodic 
principal eigenfunction associated with $\mc{O}_m$. This function is 
strictly positive in $\ol\O$ and is still a supersolution of $-L=0$ provided that
$m$ is large enough, because $\lambda_{p,D}(-L)>0$ (see the proof
of Corollary \ref{cor:!D} below).
This allows one to define the function $w$ -
which does not satisfy \eq{Oe} - 
and obtain the same contradiction as before.}
\end{remark}

\begin{corollary}\label{cor:!D}
If $P=\partial_t-L$, the domain $\O$ and $L$ are periodic,
$\lambda_{p,D}(-L)>0$ and $f\in L^\infty(\O\times\R),\ g\in
W^{2,1}_\infty(\O\times\R)$ \footnote{ $W^{2,1}_\infty$ denotes
the space of functions $u$
  such  that $u,\partial_{i}u,\partial_{ij}u,\partial_t u\in L^\infty$},
then problem
\eq{PD} admits a unique bounded solution.
\end{corollary}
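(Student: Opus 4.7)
The plan has two components: uniqueness follows immediately from earlier results, while existence requires an approximation scheme modelled on that of Corollary \ref{cor:!}, adapted to handle both the boundary datum $g$ and the fact that the principal eigenfunction $\varphi_{p,D}$ vanishes on $\partial\O$.

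For uniqueness, if $u_1,u_2$ are two bounded solutions of \eq{PD}, then $u_1-u_2$ solves \eq{PD} with $f\equiv 0$ and $g\equiv 0$. By \thm{perD} part (iii) --- which, by Remark \ref{rem:nonHolder}, does not require \eq{Holder} --- one gets $u_1\equiv u_2$.

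For existence, the first step is to reduce to the case $g\equiv 0$. Since $g\in W^{2,1}_\infty$, extend $g$ to a function $\t g\in W^{2,1}_\infty(\O\times\R)$ defined on $\ol\O\times\R$ (using standard extension results for uniformly smooth domains). Set $F:=f-P\t g\in L^\infty(\O\times\R)$. If one can produce a bounded entire solution $v$ of
$$Pv=F\ \text{ in }\O\times\R,\qquad v=0\ \text{ on }\partial\O\times\R,$$
then $u:=v+\t g$ is the sought bounded solution of \eq{PD}. To construct $v$, one solves the approximating Dirichlet problems: for $r>0$, let $v_r$ be the unique solution of $Pv_r=F$ in $\O\times(-r,r)$ with $v_r=0$ on the parabolic boundary (existence and uniqueness are standard for linear parabolic Dirichlet problems on bounded cylinders). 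Then, by interior parabolic estimates together with estimates up to the lateral boundary (relying on the uniform smoothness of $\partial\O$), a diagonal extraction produces a subsequence converging locally uniformly in $\ol\O\times\R$ to a bounded function $v$ solving the desired problem, provided the $v_r$ are uniformly bounded in $r$.

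The crux is therefore the uniform bound, and this is where the strict positivity of $\lambda_{p,D}(-L)$ enters, exactly as in Corollary \ref{cor:!} but with a twist: $\vp_{p,D}$ vanishes on $\partial\O$ and cannot be used directly as a supersolution with positive infimum. The remedy, already adopted in Remark \ref{rem:nonHolder}, is to enlarge $\O$ slightly. By continuity of the periodic principal eigenvalue with respect to domain perturbations, one can find a periodic open set $\t\O\supsetneq\ol\O$ (with the same periods as $\O$) such that $\t\lambda:=\lambda_{p,D}(-L,\t\O)>0$. Let $\t\vp$ be the associated periodic principal eigenfunction on $\t\O$; by periodicity, $\inf_{\ol\O}\t\vp>0$. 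Choosing $k>0$ large enough that
$$k\t\lambda\inf_{\ol\O}\t\vp\geq\|F\|_{L^\infty(\O\times\R)},$$
the function $w:=k\t\vp$ satisfies $Pw=k\t\lambda\t\vp\geq F$ in $\O\times(-r,r)$ and $w\geq 0$ on the parabolic boundary. The parabolic comparison principle then yields $v_r\leq w$, and the same argument applied to $-v_r$ gives $v_r\geq -w$, so $\|v_r\|_\infty\leq k\|\t\vp\|_\infty$ independently of $r$.

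The main obstacle is justifying the enlargement $\t\O$ with $\lambda_{p,D}(-L,\t\O)>0$; this amounts to the continuity of the periodic Dirichlet principal eigenvalue under monotone domain perturbations, a property well known in the bounded case and which extends to the periodic framework (for instance, one may take $\t\O:=\{x\in\R^N:\dist(x,\O)<\delta\}$ for small $\delta>0$, which preserves periodicity, and pass to the limit $\delta\to 0$ in the eigenvalue problem). Apart from this point, all remaining steps --- the extension of $g$, solvability of the cylinder problems, and the compactness argument --- are routine.
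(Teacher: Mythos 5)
Your approach is essentially the paper's: reduce to $g\equiv 0$, approximate by truncated Dirichlet problems, and derive a uniform bound from a strictly positive supersolution built from the principal eigenfunction of a slightly enlarged periodic domain $\t\O\supset\ol\O$, precisely to circumvent the vanishing of $\vp_{p,D}$ on $\partial\O$. Both the key obstacle you identify and the remedy match the paper. Two points, though, need tightening before this is complete. First, since $\O$ is a periodic domain it is unbounded, so $\O\times(-r,r)$ is \emph{not} a bounded cylinder, and the existence of $v_r$ there with zero parabolic-boundary data is not the standard bounded-cylinder fact you invoke; one must truncate in the space variable as well. This is not entirely routine because $\O\cap B_r$ need not be smooth, which is why the paper constructs smooth bounded domains $\O_n\subset B_{n+1}$ whose intersection with $B_n$ is the connected component of $\O\cap B_n$ containing $0$, and solves on $\O_n\times(-n,n)$. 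Second, the claim that $\lambda_{p,D}(-L,\t\O)>0$ for some periodic enlargement $\t\O$ is exactly the step the paper actually proves rather than cites: taking a decreasing family $(\mc{O}_m)$ of periodic domains containing $\ol\O$ with $\bigcap_m\mc{O}_m=\O$, one shows the corresponding eigenvalues $\lambda_m$ are increasing and bounded above by $\lambda_{p,D}(-L)$, then uses elliptic estimates up to the boundary and gradient estimates to pass to the limit in the eigenfunctions and identify $\lim\lambda_m=\lambda_{p,D}(-L)$, whence $\lambda_{m^*}>0$ for $m^*$ large. Invoking ``continuity of the eigenvalue under domain perturbation'' as a black box is where the real work lies; with that and the spatial truncation filled in, your proof coincides with the paper's.
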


\begin{proof}
Note that, up to replace $f$ with $f-Pg$, it is not restrictive to
assume that $g\equiv0$. As in the proof of Corollary \ref{cor:!}, we find a
solution $u$ as the limit as $n\to\infty$ of solutions $u_n$ of
the problems \formulaI{fredholmD} \left\{\begin{array}{ll}
Pu_n=f(x,t), & x\in\O_n,\ t\in(-n,n)\\
u_n=0, & x\in\partial\O_n,\ t\in(-n,n)\\
u_n(x,-n)=0, & x\in\O_n,
\end{array}\right.
\formulaF where $(\O_n)_{n\in\N}$ is a family of bounded domains
recovering $\O$ (defined below). The proof of the uniform
boundedness  of the $u_n$ is now more delicate, because
$\varphi_{p,D}$ is not bounded from below away from zero and then we
cannot take as sub and supersolution of \eq{fredholmD} the functions
$-k\pD$ and $k\pD$ with $k$ large enough. We overcome this difficulty by
extending $a_{ij},\ b_i,\ c$ to the whole space and by 
considering a domain which is slightly larger than $\O$.
Let $(\mc{O}_m)_{m\in\N}$ be a uniformly smooth family of periodic domains
satisfying
$$\fa m\in\N,\quad\mc{O}_m\supset\mc{O}_{m+1}
\supset\ol\O,\qquad\bigcap_{m\in\N}\mc{O}_m=\O.$$ For any $m\in\N$
let $\lambda_m$ and $\varphi_m$ be the periodic \pe\ and
eigenfunction of $-L$ in $\mc{O}_m$, with Dirichlet boundary
conditions, such that $\|\varphi_m\|_{L^\infty(\mc{O}_m)}=1$. It
follows from the \MP\ that the sequence $(\lambda_m)_{m\in\N}$ is
increasing and bounded from above by $\lambda_{p,D}(-L)$.
%In fact,
%if $\lambda_m\geq\lambda_m$ for some $m<n$ (eventually
%$\lambda_m=\lambda_{p,D}(-L)$) then the function
%$w:=\varphi_m/\varphi_m$ satisfies $-(\t
%M+\lambda_m-\lambda_m)w=0$ in $\mc{O}_m$, where $\t M$ is defined
%as in the proof of Lemma \ref{lem:1per}, with $v_\infty$ replaced
%by $\varphi_m$. As $w$ vanishes on $\partial\mc{O}_m$, the weak
%\MP\ yields $w\leq0$, which is impossible.
Owing to the uniform
smoothness of the $\mc{O}_m$, elliptic estimates up to the
boundary imply that the $\varphi_m$ converge (up to subsequences)
uniformly in $\O$ to a nonnegative periodic solution $\varphi$ of
$-L\varphi=\lambda\varphi$ in $\O$, where
$\lambda=\limn\lambda_m$. Moreover, since for any $\e>0$ there
exists $\delta$ such that
$$\fa m\in\N,\quad\dist(x,\partial\mc{O}_m)\leq\delta\solose
\varphi_m(x)\leq\e$$ (by gradient estimates up to the boundary),
we see that $\varphi$ vanishes on $\partial\O$ and that
$\|\varphi\|_{L^\infty(\O)}=1$. Hence, $\varphi>0$ in $\O$ by the \SMP. 
This shows that $\lambda=\lambda_{p,D}(-L)$.
Thus, there exists $m^*\in\N$ such that $\lambda_{m^*}>0$. The function
$$v(x):=\frac{\|f\|_{L^\infty(\O\times\R)}}{\lambda_{m^*}\min_{\ol\O}
\vp_{m^*}}\vp_{m^*}(x)$$
is the strictly positive supersolution we need to show
that the solutions $u_n$ of \eq{fredholmD} are uniformly bounded
for $n\in\N$. The smooth domains $\O_n$ are defined in such a way that, for 
$n\in\N$, $\O_n\subset B_{n+1}$ and $\O_n\cap B_n$ coincides with the
connected component of $\O\cap B_n$ containing $0$ (which can be assumed 
to belong to $\O$). It is easily seen that for any compact $K\subset\R^N$
there exists $n_0\in\N$ such that $\O\cap K\subset\O_n$ for $n\geq n_0$.
Then, we proceed exactly as in the proof of Corollary \ref{cor:!},
with $B_r$ replaced by $\O_n$.
The uniqueness result follows from \thm{perD} part (iii) and Remark 
\ref{rem:nonHolder}.
\end{proof}

\begin{remark}\rm
If $c\leq0$ then $\lambda_{p,D}(-L)>0$ and then
the existence and uniqueness result of Corollary \ref{cor:!D} applies
(in contrast with the whole space case, cf.~Remark \ref{rem:c}).
This is easily seen by applying the \SMP\ to the periodic principal
eigenfunction $\pD$.
\end{remark}

%%%%%%%%%%%%%%%%%%%%%%%%%%%%%%%%%%%%%%%%%%%%%%%%%%%%%%%%%%%%%%%%%%

\subsection{Robin boundary conditions}\label{sec:N}

\def\pN{\varphi_{p,\mc{N}}}

We consider now the Robin problem \formulaI{LN}
\left\{\begin{array}{ll}
Pu=f(x,t), & x\in\O,\ t\in\R\\
\mc{N} u=h(x,t), & x\in\partial\O,\ t\in\R,
\end{array}\right.
\formulaF where
$$\mc{N}u=\alpha(x,t)u+\beta(x,t)\.\nabla u,$$
with $\alpha,\ \beta$ bounded and satisfying
$$\alpha\geq0,\qquad
\inf_{x\in\partial\O}\beta(x)\.\nu(x)>0.$$ 
We always assume in this section that
$$a_{ij},b_i,c\in C_b^{\gamma,\frac\gamma2}(\O\times\R),$$
for some $0<\gamma<1$, and solutions of \eq{LN} are understood in classical sense.
Hence, \eq{LN} admits solution only if $f$ and $h$ satisfy some regularity
conditions, but for our uniqueness results we do not need to impose
them. 

If $P=\partial_t-L$, 
\formulaI{alphabeta}
\alpha=\alpha(x),\ \beta=\beta(x),\qquad
\alpha,\beta\in C^{1+\gamma}_b(\partial\O),
\formulaF
and $\O,\ L,\ \mc{N}$ are periodic then
$\lambda_{p,\mc{N}}$ and $\pN$ denote respectively the periodic
\pe\ and eigenfunction of $-L$ in $\O$, under Robin boundary
conditions. That is, $\lambda_{p,\mc{N}}$ is the unique (real)
number such that the eigenvalue problem
$$\left\{\begin{array}{ll}
-L\pN=\lambda_{p,\mc{N}}\pN & \text{in }\O\\
\mc{N}\pN=0 & \text{on }\partial\O
\end{array}\right.$$
admits a positive periodic solution $\pN$ (unique up to a
multiplicative constant).

The strategy used to prove our results is exactly the same as in
Section \ref{sec:per}, the following lemma being the analogue of
Lemma \ref{lem:1per}. While in the whole space case we used
interior estimates for strong solutions, here we need H\"older
estimates up to the boundary (see \cite{Lady}, \cite{Lie}).

\begin{lemma}\label{lem:1perN}
Assume that $\O$ is periodic and that the operators $P,\ \mc{N}$
and the functions $f,\ h$ are periodic in the $m$-th variable. If
there exists a function $v\in C^{2,1}_b(\O\times\R)$ satisfying
$$\inf_{\O}v>0,\qquad\left\{\begin{array}{ll}
Pv\geq0, & x\in\O,\ t\in\R\\
\mc{N} v\geq0, & x\in\partial\O,\ t\in\R,
\end{array}\right.$$
then any bounded solution of \eq{LN} is periodic in the $m$-th
variable.
\end{lemma}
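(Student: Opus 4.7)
The plan is to adapt the proof of Lemma \ref{lem:1per} to the Robin boundary setting, the modifications being that translations must respect the periodic structure of $\O$ and that the interior strong maximum principle must be combined with a boundary Hopf argument. First I would set $\psi(X):=u(X+l_m e_m)-u(X)$ and $w(X):=\psi(X)/v(X)$. The $l_m$-periodicity of $P$, $f$, $\mc{N}$, $h$ in the $m$-th variable immediately yields $P\psi=0$ in $\O\times\R$ and $\mc{N}\psi=0$ on $\partial\O\times\R$, while $\inf v>0$ makes $w$ a bounded function on $\ol\O\times\R$. As in Lemma \ref{lem:1per}, it suffices to prove $\sup w\leq 0$; the reverse inequality will follow by swapping $l_m$ with $-l_m$.

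Next I would argue by contradiction: assume $k:=\sup w>0$ and pick $X_n=(x_n,t_n)\in\O\times\R$ with $w(X_n)\to k$. Exploiting the periodicity of $\O$ in every spatial direction (this is the full force of the hypothesis), decompose $x_n=z_n+y_n$ with $z_n\in\prod_{i=1}^N l_i\Z$ and $y_n$ in the compact fundamental cell, extracting $y_n\to y_\infty$. The translates $\psi_n(X):=\psi(X+X_n)$ and $v_n(X):=v(X+X_n)$ are defined on $(\O-y_n)\times\R$ by $l_i$-periodicity of $\O$. Parabolic Schauder estimates up to the boundary --- available thanks to the uniform smoothness of $\O$ and the H\"older continuity of the coefficients assumed in Section \ref{sec:N} --- would yield $C^{2+\t\gamma,1+\t\gamma/2}_{loc}$-convergent subsequences $\psi_n\to\psi_\infty$ and $v_n\to v_\infty$ on $\ol{\O-y_\infty}\times\R$, with limits satisfying $\t P\psi_\infty=0$, $\t{\mc{N}}\psi_\infty=0$, $\inf v_\infty>0$, $\t P v_\infty\geq 0$ and $\t{\mc{N}}v_\infty\geq 0$.

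The quotient $w_\infty:=\psi_\infty/v_\infty$ attains its maximum $k$ at $0$ and satisfies
\[
\partial_t w_\infty-\t M w_\infty+\frac{\t P v_\infty}{v_\infty}w_\infty=0
\]
with nonnegative zero-order coefficient. The main obstacle is the case $0\in\partial(\O-y_\infty)$, which cannot be excluded a priori. My plan there is to use the boundary identity
\[
\mc{N}w_\infty=\frac{\t{\mc{N}}\psi_\infty}{v_\infty}-w_\infty\frac{\t{\mc{N}}v_\infty}{v_\infty}+\alpha w_\infty=\alpha w_\infty-w_\infty\frac{\t{\mc{N}}v_\infty}{v_\infty},
\]
so that $\t{\mc{N}}v_\infty\geq 0$ and $w_\infty(0)=k>0$ give $\mc{N}w_\infty(0)\leq\alpha(0)k=\alpha(0)w_\infty(0)$, i.e. $\beta(0)\cdot\grad w_\infty(0)\leq 0$. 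Vanishing of tangential derivatives at the boundary maximum, together with $\beta\cdot\nu>0$, would then force $\partial_\nu w_\infty(0)\leq 0$, contradicting the strict Hopf inequality unless $w_\infty\equiv k$ on the parabolic past of $0$. If $0$ is interior to $\O-y_\infty$, the same conclusion follows from the parabolic \SMP\ directly. The hypothesis $\inf v>0$ is crucial here, since it keeps the denominator of $w$ smooth up to the boundary --- the analogous approach fails in the Dirichlet case, which is why \thm{1perD} had to be proved differently.

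Finally I would close the argument with the telescoping contradiction familiar from Lemma \ref{lem:1per}. A diagonal extraction yields reals $\zeta_h:=\lim_n u(-h l_m e_m+X_n)\in[\inf u,\sup u]$; spatial periodicity of $\O$ ensures $-h l_m e_m\in\ol{\O-y_\infty}$ for every $h\in\N$, and the identity $w_\infty\equiv k$ on the parabolic past gives
\[
\zeta_{h-1}-\zeta_h=\psi_\infty(-h l_m e_m)=k\,v_\infty(-h l_m e_m)\geq k\inf v>0,
\]
forcing $\zeta_h\to-\infty$ and contradicting the boundedness of $u$.
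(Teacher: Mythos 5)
Your proposal is correct and follows essentially the same route as the paper: translate so as to land in a fundamental cell (the paper shifts by $Z_n\in\prod_i l_i\Z$ keeping the domain $\O$ fixed, you shift by $X_n$ and track the shifted domain $\O-y_n$, which is the same up to recentering), pass to a limit via up-to-the-boundary parabolic H\"older estimates, dispose of an interior maximum by the strong maximum principle, and rule out a boundary maximum by combining Hopf's lemma with the Robin condition. The only cosmetic difference is that the paper expands $\mc{N}\psi_\infty=0$ directly using $\psi_\infty=w_\infty v_\infty$ at $Y_\infty$, whereas you compute $\mc{N}w_\infty$ via the quotient rule; both reduce to the same inequality $\beta^*\cdot\nabla w_\infty\leq0$ at the maximum point, contradicting Hopf.
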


\begin{proof}
The proof is similar to that of Lemma \ref{lem:1per} and we will
skip some details. But now we translate the functions $\psi,\ v$
and the coefficients of the equation by $Z_n$ instead of $X_n$,
where $\seq{Z}$ is the sequence in $\Z l_1\times\cdots\times\Z
l_{N+1}$ such that
$Y_n:=X_n-Z_n\in[0,l_1)\times\cdots\times[0,l_{N+1})$. Then, the
only situation which is not covered by the arguments in the whole
space is when $w_\infty<k$ in $\O$ and $Y_n$ converges (up to
subsequences) to some
$Y_\infty=(y_\infty,\eta_\infty)\in\partial\O\times[0,l_{N+1}]$.
Let us show that this cannot occur. Let $\alpha^*$ and $\beta^*$
be the limits of (subsequences of) $\alpha(Y_\infty+Z_n)$ and
$\beta(Y_\infty+Z_n)$ respectively. Clearly,
$$\alpha^*\geq0,\qquad
\beta^*\.\nu(y_\infty)>0.$$ Thus, since $w_\infty$ is a solution
of a linear parabolic equation with nonpositive zero order term
achieving a positive maximum at $Y_\infty$, the Hopf lemma yields
$\beta^*\.\nabla w_\infty(Y_\infty)>0$. This is impossible,
because
\begin{equation*}\begin{split}
0 &=\alpha^*\psi_\infty(Y_\infty)+
\beta^*\.\grad\psi_\infty(Y_\infty)\\
&= \alpha^*(w_\infty v_\infty)(Y_\infty)
+\beta^*\.\grad(w_\infty v_\infty)(Y_\infty)\\
&= k(\alpha^*v_\infty(Y_\infty) +\beta^*\.\grad
v_\infty(Y_\infty))+v_\infty(Y_\infty) \beta^*\.\nabla w_\infty(Y_\infty)\\
&\geq v_\infty(Y_\infty) \beta^*\.\nabla w_\infty(Y_\infty)\\
&> 0.
\end{split}\end{equation*}
\end{proof}

Applying Lemma \ref{lem:1perN} with $v\equiv1$ we immediately get

\begin{theorem}\label{thm:1perN}
Let $u$ be a bounded solution of \eq{LN}, with $\O$ periodic, $P,\
\mc{N},\ f,\ h$ periodic in the $m$-th variable and $c\leq0$.
Then, $u$ is periodic in the $m$-th variable.
\end{theorem}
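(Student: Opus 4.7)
The plan is to apply Lemma \ref{lem:1perN} directly with the constant function $v\equiv 1$. The lemma requires a function $v\in C^{2,1}_b(\O\times\R)$ with $\inf_{\O}v>0$, $Pv\geq 0$ in $\O\times\R$, and $\mc{N}v\geq 0$ on $\partial\O\times\R$, given the same periodicity structure on $\O$, $P$, $\mc{N}$, $f$, $h$ that we already have in the theorem's hypotheses.

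I would first verify the three conditions on $v\equiv 1$. Regularity and positivity are obvious: $v\in C^{2,1}_b(\O\times\R)$ and $\inf_{\O}v=1>0$. For the equation, since all space and time derivatives of $v$ vanish, $Pv=-c(x,t)\cdot 1=-c(x,t)\geq 0$ in $\O\times\R$, using precisely the hypothesis $c\leq 0$. For the boundary condition, $\mc{N}v=\alpha(x,t)\cdot 1+\beta(x,t)\.\nabla 1=\alpha(x,t)\geq 0$ on $\partial\O\times\R$, using the standing sign assumption $\alpha\geq 0$ from Section \ref{sec:N}.

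With all hypotheses of Lemma \ref{lem:1perN} confirmed, the conclusion that any bounded solution of \eq{LN} is periodic in the $m$-th variable follows immediately.

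Since the whole argument is a one-line verification, there is no genuine obstacle; the real content lies in Lemma \ref{lem:1perN}, where the non-trivial issue is handling the case in which the maximizing sequence $X_n$ accumulates at the boundary. That delicate analysis (the Hopf-lemma contradiction using $\beta^*\.\nu(y_\infty)>0$) is already absorbed into the lemma, so nothing further is needed here. The only care required is to record that the sign conventions of $c$ and $\alpha$ are exactly what make the choice $v\equiv 1$ admissible.
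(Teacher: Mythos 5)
Your proposal is correct and matches the paper's proof exactly: the paper also derives Theorem \ref{thm:1perN} by applying Lemma \ref{lem:1perN} with $v\equiv1$, the three hypotheses being verified just as you verified them from $c\leq0$ and $\alpha\geq0$.
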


Compare the previous statement with \thm{1perD}, which holds for
domains periodic just in the direction $x_m$. In the case of Robin
boundary conditions, we are only able to deal with domains
periodic in all directions.

\begin{corollary}\label{cor:liouvilleN}
Let $u$ be a bounded solution of \formulaI{P=0N}
\left\{\begin{array}{ll}
Pu=0, & x\in\O,\ t\in\R\\
\mc{N} u=0, & x\in\partial\O,\ t\in\R,
\end{array}\right.
\formulaF with $\O,\ P,\ \mc{N}$ periodic and $c\leq0$. Then, two
possibilities occur:
\begin{itemize}

\item[{\rm 1)}] $c\equiv0,\ \alpha\equiv0$ and $u$ is constant;

\item[{\rm2)}] $\|c\|_{L^\infty(\O)}+\|\alpha\|_{L^\infty(\partial\O)}\neq0$
and $u\equiv0$.

\end{itemize}
\end{corollary}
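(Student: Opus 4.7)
The plan is to apply \thm{1perN} in each of the $N+1$ directions to deduce that $u$ is periodic (with period $(l_1\pp l_{N+1})$), and then exploit the attainment of the global extrema via the strong maximum principle together with the Hopf lemma applied through the Robin condition.

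Since $\O,P,\mc{N}$ are periodic and $f\equiv h\equiv0$ are trivially periodic in each variable with the appropriate period, \thm{1perN} applied for $m=1\pp N+1$ yields that $u$ is periodic. In particular $u$ attains its global maximum $M$ and minimum $m$ at some points of $\ol\O\times\R$. Up to replacing $u$ with $-u$ (which solves the same homogeneous problem) we may assume $M\geq0$; I will analyze the case $M>0$.

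Let $(x_M,t_M)$ be a point where $u(x_M,t_M)=M$. If $x_M\in\O$, the parabolic \SMP\ applied to $u$ (using $Pu=0$, $c\leq0$ and $M\geq0$) forces $u\equiv M$ on $\O\times(-\infty,t_M]$, and then the time-periodicity of $u$ propagates this to all of $\O\times\R$. If instead $x_M\in\partial\O$, I argue by contradiction: were $u<M$ in $\O$ on the parabolic past of $(x_M,t_M)$, the parabolic Hopf lemma would give $\partial_\nu u(x_M,t_M)>0$; since $(x_M,t_M)$ is also a maximum of $u$ restricted to $\partial\O\times\{t_M\}$, the tangential components of $\nabla u$ vanish there, so $\nabla u(x_M,t_M)=(\partial_\nu u)\nu$. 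Using $\beta\.\nu>0$ and $\alpha\geq0$ we would obtain
$$\mc{N}u(x_M,t_M)=\alpha M+(\beta\.\nu)\partial_\nu u>0,$$
contradicting $\mc{N}u=0$. Hence also in this boundary case the only possibility is $u\equiv M$ on a nonempty parabolic cylinder, and periodicity gives $u\equiv M$ on $\O\times\R$.

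Inserting $u\equiv M$ into $Pu=0$ and $\mc{N}u=0$ yields $cM\equiv0$ and $\alpha M\equiv0$, whence $c\equiv0$ and $\alpha\equiv0$ because $M>0$; this is case 1) of the statement. The same argument applied to $-u$ treats the situation $m<0$ (yielding again case 1)). The only remaining possibility is $M\leq0$ and $m\geq0$, forcing $u\equiv0$, which is case 2). The main technical point is the boundary step: one needs both the \SMP/Hopf combination to rule out a strict maximum on $\partial\O$ and the observation that the tangential part of $\nabla u$ necessarily vanishes at a global boundary maximum, so that the Robin condition can detect the sign of $\beta\.\nabla u$.
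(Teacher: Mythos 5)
Your proof follows essentially the same route as the paper's: derive periodicity of $u$ from Theorem~\ref{thm:1perN}, locate a global maximum, rule out a strict boundary maximum via the Hopf lemma combined with $\mc{N}u=0$, and then substitute the constant $u\equiv M$ into $Pu=0$ and $\mc{N}u=0$ to obtain the dichotomy. The only (cosmetic) difference is that you make explicit the vanishing of the tangential gradient at a boundary maximum before using $\beta\cdot\nu>0$, where the paper invokes Hopf's lemma directly in the oblique outward direction $\beta$ and writes $0<\beta\cdot\nabla u\leq\mc{N}u=0$.
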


\begin{proof}
By \thm{1perN} we know that $u$ is periodic in all space/time
directions and then it has global maximum and minimum in
$\O\times\R$. Let $M=\max u=u(x_0,t_0)$. Up to replace $u$ with
$-u$, we can assume without loss of generality that $M\geq0$.
Thus, by the \SMP, either $u=M$ in $\O\times(-\infty,t_0]$, or
$u<M$ in $\O\times(-\infty,t_0]$ and $\ol x\in\partial\O$. The
second case is ruled out because, by Hopf's lemma we would have
$$0<\beta(x_0,t_0)\.\nabla u(x_0,t_0)\leq
\mc{N}u(x_0,t_0)=0.$$ Therefore, $u=M$ in $\O\times(-\infty,t_0]$
and then, by periodicity, in $\O\times\R$. The statement follows.
\end{proof}

\begin{theorem}\label{thm:perN}
Let $P=\partial_t-L$, the functions $\alpha,\ \beta$ satisfy
\eq{alphabeta} and $\O$, $L$, $\mc{N}$, $f$, $h$ be periodic. If $u$
is a bounded solution of \eq{LN} we have that 
\begin{itemize}

\item[{\rm(i)}] if $\lambda_{p,\mc{N}}(-L)\geq0$ then $u$ is periodic;

\item[{\rm(ii)}] if $\lambda_{p,\mc{N}}(-L)=0$ and either $f,h\leq0$ or
$f,h\geq0$ then $u\equiv k\varphi_{p,\mc{N}}$, for some $k\in\R$,
and $f,h\equiv0$;

\item[{\rm(iii)}] if $\lambda_{p,\mc{N}}(-L)>0$ and $f,h\equiv0$ then
$u\equiv0$.

\end{itemize}
\end{theorem}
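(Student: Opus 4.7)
The plan is to follow the same strategy as in the proof of \thm{per}, with Lemma \ref{lem:1per} replaced by Lemma \ref{lem:1perN} and with careful attention paid to the Robin boundary condition when applying the strong maximum principle.

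For part (i), I would take $v(x,t):=\pN(x)$ and verify that it satisfies the hypotheses of Lemma \ref{lem:1perN}. The regularity assumption \eq{alphabeta} on $\alpha,\beta$ together with the uniform H\"older continuity of the coefficients of $L$ guarantees that $\pN\in C^{2,1}_b(\O\times\R)$ (in fact, it is $t$-independent). The fact that $\pN$ is periodic, continuous up to $\partial\O$, and strictly positive in $\O$, combined with the Hopf lemma applied at a potential boundary zero of $\pN$ (which would contradict $\mc{N}\pN=0$), yields $\inf_\O\pN>0$. Finally $Pv=\lambda_{p,\mc{N}}(-L)\pN\geq0$ and $\mc{N}v=0$, so the lemma applies in every direction $m\in\{1\pp N+1\}$ and $u$ is periodic.

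For parts (ii) and (iii), knowing from (i) that $u$ is periodic, I would mimic the argument of \thm{per}(ii)-(iii). Specifically, after the reduction $f,h\leq0$ (resp.~$\sup u\geq0$), I set
\[
k:=\sup_{(x,t)\in\O\times\R}\frac{u(x,t)}{\pN(x)},
\]
which is finite because $\inf_\O\pN>0$ and $u$ is bounded; in case (iii) the hypothesis $\sup u\geq0$ forces $k\geq0$. Since $u/\pN$ is periodic and continuous on $\ol\O\times\R$, it attains $k$ at some $X_0=(x_0,t_0)\in\ol\O\times[0,l_{N+1}]$. The function $w:=k\pN-u$ is then nonnegative, periodic, vanishes at $X_0$, and satisfies
\[
Pw=k\lambda_{p,\mc{N}}(-L)\pN-f\geq0\ \text{in }\O\times\R,\qquad
\mc{N}w=-h\geq0\ \text{on }\partial\O\times\R,
\]
where in case (iii) we use $f\equiv h\equiv0$ and $k\lambda_{p,\mc{N}}(-L)\geq0$.

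The key obstacle, which is where the Robin boundary condition enters nontrivially, is to rule out the possibility that $w$ vanishes only on $\partial\O\times\R$. If $x_0\in\O$, the parabolic strong maximum principle applied to $w$ together with its time-periodicity immediately yields $w\equiv0$. If instead $x_0\in\partial\O$, assume by contradiction that $w>0$ in $\O\times(-\infty,t_0]$. Then the parabolic Hopf lemma gives $\beta(x_0,t_0)\.\nabla w(x_0,t_0)<0$, whereas evaluating the Robin inequality at $X_0$ and using $w(X_0)=0$ yields $\beta(x_0,t_0)\.\nabla w(x_0,t_0)=\mc{N}w(X_0)-\alpha(x_0)w(X_0)\geq0$, a contradiction. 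Hence $w\equiv0$ on $\O\times(-\infty,t_0]$ and, again by time-periodicity, $w\equiv0$ throughout. This shows $u\equiv k\pN$. Going back to the equation, $f=Pu=k\lambda_{p,\mc{N}}(-L)\pN$ and $h=\mc{N}u=0$. In case (ii) this gives $f,h\equiv0$ (and determines nothing on $k$); in case (iii) the strict positivity $\lambda_{p,\mc{N}}(-L)>0$ forces $k=0$, i.e.~$u\equiv0$.
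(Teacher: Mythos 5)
Your proof follows essentially the same strategy as the paper's: for part (i) you apply Lemma \ref{lem:1perN} with $v=\pN$, after first establishing $\inf_\O\pN>0$ via the Hopf lemma, and for parts (ii)--(iii) you reproduce the argument of \thm{per}(ii)--(iii), with the only new case — $w$ attaining its minimum $0$ only on $\partial\O$ — ruled out by the same Hopf lemma contradiction the paper uses. The only slip is cosmetic: under \eq{alphabeta} the coefficient $\beta$ is $t$-independent, so $\beta(x_0,t_0)$ should read $\beta(x_0)$.
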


\begin{proof}
First, we show that
$$\inf_{\O}\pN>0,$$ no matter what the sign of
$\lambda_{p,\mc{N}}(-L)$ is. Indeed, if $\inf_{\O}\pN=0,$ then the
periodicity and the positivity of $\pN$ in $\O$ yield $\pN(y)=0$
for some $y\in\partial\O$. Hence,
$$0=\mc{N}\pN(y)=\beta(y)\.\nabla\pN(y),$$ which contradicts the Hopf
lemma.

(i) The statement follows by applying Lemma \ref{lem:1perN} with
$v=\pN$.

(ii)-(iii) We can argue exactly as in the proof of \thm{per} part
(ii) and (iii). The only different situation is if $w>0$ in
$\O\times\R$ and vanishes at $(x_0,t_0)\in\partial\O\times\R$. In
this case, we get
$$\beta(x_0)\.\nabla
w(x_0,t_0)=\mc{N}w(x_0,t_0)=-\mc{N}u(x_0,t_0)=-h(x_0,t_0)\geq0$$
(we recall that it is not restrictive to assume that $f,h\leq0$).
Once again, this is in contradiction with the Hopf lemma.
\end{proof}

We conclude with the existence and uniqueness result for \eq{LN}.
We assume that
\formulaI{hypex}
f\in C^{\gamma,\frac\gamma2}_b(\O\times\R),
\qquad
h\in C^{2+\gamma,1+\frac\gamma2}_b(\partial\O\times\R),
\formulaF
and we strenghten the regularity condition on $\beta$ in \eq{alphabeta}:
\formulaI{alphabetaex}
\alpha=\alpha(x),\ \beta=\beta(x),\qquad
\alpha\in C^{1+\gamma}_b(\partial\O),
\qquad \beta\in C^{2+\gamma}_b(\partial\O).
\formulaF

\begin{theorem}\label{thm:!N}
If $P=\partial_t-L$, conditions \eq{hypex}-\eq{alphabetaex} hold, $L,\
\mc{N}$ are periodic and 
$\lambda_{p,\mc{N}}(-L)>0$, then 
problem \eq{LN} admits a unique bounded solution $u$. If in
addition $f$ and $h$ are also periodic, then $u$ is periodic.
\end{theorem}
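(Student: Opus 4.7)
The plan is to follow the scheme of Corollary \ref{cor:!D}, with the periodic principal eigenfunction $\pN$---which is strictly positive and, crucially, bounded from below away from $0$, as established at the very beginning of the proof of \thm{perN}---playing the role that $\pD$ played in the Dirichlet case. Uniqueness is immediate: if $u_1$ and $u_2$ are two bounded solutions of \eq{LN}, then $u_1-u_2$ satisfies the homogeneous problem \eq{P=0N}, and since $\lambda_{p,\mc{N}}(-L)>0$, \thm{perN} part (iii) forces $u_1\equiv u_2$.

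For existence, I would first reduce to the case $h\equiv 0$. The uniform smoothness of $\partial\O$ together with \eq{alphabetaex} allows one to construct, by a standard local computation in a tubular neighborhood of $\partial\O$ followed by a cutoff, an extension $\eta\in C^{2+\gamma,1+\frac\gamma2}_b(\ol\O\times\R)$ with $\mc{N}\eta=h$ on $\partial\O\times\R$. Setting $\t u:=u-\eta$ transforms \eq{LN} into the same problem with right-hand side $\t f:=f-P\eta\in C^{\gamma,\frac\gamma2}_b(\O\times\R)$ and homogeneous Robin data. I would then construct approximating solutions $\t u_n$ of
$$P\t u_n=\t f\text{ in }\O_n\times(-n,n),\qquad \mc{N}\t u_n=0\text{ on }(\partial\O\cap\ol{\O_n})\times(-n,n),$$
$\t u_n=0$ on the artificial part $(\partial\O_n\setminus\partial\O)\times(-n,n)$ and at $t=-n$, where $(\O_n)_{n\in\N}$ is an exhaustion of $\O$ by bounded smooth domains arranged as in Corollary \ref{cor:!D}, with the natural Robin portion and the artificial Dirichlet portion of $\partial\O_n$ meeting transversally. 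Classical parabolic theory for such mixed problems provides the $\t u_n$. The key observation is that
$$v(x):=\frac{\|\t f\|_{L^\infty(\O\times\R)}}{\lambda_{p,\mc{N}}(-L)\inf_\O\pN}\,\pN(x)$$
satisfies $Pv\geq\|\t f\|_\infty$ in $\O$, $\mc{N}v=0$ on the Robin part of $\partial\O_n$, and $v>0$ on the artificial Dirichlet part; the comparison principle for the mixed problem therefore yields $|\t u_n|\leq v$ uniformly in $n$. Interior parabolic estimates and Schauder estimates up to the Robin boundary, justified by \eq{hypex}--\eq{alphabetaex}, then allow one to extract a subsequence converging locally in $C^{2+\gamma',1+\frac{\gamma'}2}$, for some $0<\gamma'<\gamma$, to a bounded classical solution of the homogeneous Robin problem on all of $\O\times\R$; adding back $\eta$ recovers the desired solution of \eq{LN}.

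Finally, when $f$ and $h$ are periodic, the bounded solution $u$ just produced is periodic by \thm{perN} part (i), since $\lambda_{p,\mc{N}}(-L)>0\geq 0$. I expect the main obstacle to lie in the choice of the exhaustion $(\O_n)$ and in handling the mixed Robin/Dirichlet conditions on $\partial\O_n$: one needs the two portions of $\partial\O_n$ to meet transversally and to have H\"older norms of the data and of the coefficients controlled uniformly in $n$, so that the parabolic Schauder theory applies with constants independent of $n$ and the limit procedure closes. The construction of the boundary lifting $\eta$ is a second technical point, but both are standard consequences of \eq{hypex}--\eq{alphabetaex} and the uniform smoothness of $\O$, and the whole argument rests on the fact that $\inf_\O\pN>0$, which was the sole reason the barrier $v$ above could be defined globally.
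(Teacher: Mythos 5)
Your proposal is correct and follows essentially the same scheme as the paper: reduce to $h\equiv 0$ by subtracting a boundary lifting, solve approximate problems on an exhaustion $(\Omega_n)$ of $\Omega$, control the approximations uniformly by the barrier $C\varphi_{p,\mathcal{N}}$ (which works precisely because $\inf_\Omega\varphi_{p,\mathcal{N}}>0$ and $\lambda_{p,\mathcal{N}}(-L)>0$), pass to the limit by parabolic estimates, and get uniqueness from Theorem \ref{thm:perN} part (iii) and periodicity from part (i). The one place where you diverge is the point you yourself flag as the main obstacle: you impose a genuinely mixed boundary condition, Robin on $\partial\Omega\cap\overline{\Omega_n}$ and Dirichlet on the artificial boundary, with the two portions meeting transversally. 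The solvability and regularity theory for parabolic problems where the boundary condition changes type discontinuously along an interface is not as routine as you suggest (solutions generically lose regularity at the interface, and the classical Schauder theory does not apply there uniformly). The paper sidesteps this entirely by interpolating the two conditions smoothly: it prescribes $(\chi_n\mathcal{N}+(1-\chi_n))u_n=0$ on $\partial\Omega_n$, where $\chi_n$ is a cut-off equal to $1$ on $B_{n-1}$ and $0$ outside $B_n$, so that each approximate problem is a single oblique-derivative/Dirichlet problem with continuously varying coefficients and no corner of mixed type. Your argument can be rescued without this device---the comparison principle still yields the uniform bound $|\tilde u_n|\le v$, and since the interface recedes to infinity only local estimates away from it are needed to pass to the limit---but you would need to invoke (or prove) existence for the sharp-interface mixed problem, which is exactly the technical debt the paper's cut-off construction is designed to avoid.
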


\begin{proof}
From the uniform smoothness of $\O$ it follows that there exists
$\delta>0$ such that each point in $\O^\delta:=\{x\in\O\ :\
\dist(x,\partial\O)<\delta\}$ admits a unique projection $\pi(x)$
on $\partial\O$. Hence, the function $\dist(x,\partial\O)$ is well
defined and smooth in $\O^\delta$. Let $\chi\in C^\infty(\R)$ be a
cut-off function such that $\chi=1$ in $(0,\delta/2)$, $\chi=0$ in
$(\delta,+\infty)$. The function
$$\psi(x,t):=\frac{h(\pi(x),t)}{\beta(\pi(x))
\.\nu(\pi(x))}\dist(x,\partial\O)
\chi(\dist(x,\partial\O))$$ belongs to
$C^{2+\gamma,1+\frac\gamma2}(\O\times\R)$ and 
satisfies $\mc{N}\psi=h$ on $\partial\O$. Therefore, replacing
$f$ by $f-P\psi$, we can take $h\equiv0$ in \eq{LN}. Define
the domains $\seq{\O}$ as in the proof of Corollary
\ref{cor:!D}. Consider a family of cut-off functions
$\seq{\chi}$ uniformly bounded in $C^{2+\gamma,1+\frac\gamma2}_b(\R^N)$
such that, for $n>1$,
$$\chi_n=1\text{ in }B_{n-1},\qquad
\chi_n>0\text{ in } B_n\backslash B_{n-1},\qquad
\chi_n=0\text{ in }\R^N\backslash B_n.$$
Proceeding as
in the proof of Corollary \ref{cor:!}, with $B_R$ replaced by 
$\O_n$ and $\varphi_p$ by
$\pN$ (which has positive infimum), we see that,
as $n\to\infty$, the unique solution of
$$\left\{\begin{array}{ll}
Pu_n=f(x,t), & x\in\O_n,\ t\in(-n,n)\\
(\chi_n\mc{N}+(1-\chi_n))u_n=0, & x\in\partial \O_n,\ t\in(-n,n)\\
u_n(x,-n)=0, & x\in\O_n,
\end{array}\right.$$
converges (up to subsequences) in $C^{2,1}_b(\O\cap K,(-r,r))$, for any compact 
$K\subset\R^N$ and any $r>0$, to a bounded solution of \eq{LN}. 
The uniqueness result is a consequence of \thm{perN}
part (iii).
\end{proof}

Using the Hopf lemma, one can readily check that if $L,\
\mc{N}$ are periodic, $c\leq0$ and $\alpha,\ c$ are not identically
equal to zero, then
$\lambda_{p,\mc{N}}(-L)>0$. Therefore, the result of \thm{!N}
applies in this case.

%%%%%%%%%%%%%%%%%%%%%%%%%%%%%%%%%%%%%%%%%%%%%%%%%%%%%%%%%%%%%%%%%%
%%%%%%%%%%%%%%%%%%%%%%%%%%%%%%%%%%%%%%%%%%%%%%%%%%%%%%%%%%%%%%%%%%

\def\cprime{$'$} \def\polhk#1{\setbox0=\hbox{#1}{\ooalign{\hidewidth
  \lower1.5ex\hbox{`}\hidewidth\crcr\unhbox0}}}
  \def\cfac#1{\ifmmode\setbox7\hbox{$\accent"5E#1$}\else
  \setbox7\hbox{\accent"5E#1}\penalty 10000\relax\fi\raise 1\ht7
  \hbox{\lower1.15ex\hbox to 1\wd7{\hss\accent"13\hss}}\penalty 10000
  \hskip-1\wd7\penalty 10000\box7}

\addcontentsline{toc}{section}{References}

\end{document}